\def\R{\mathbb{R}}
\def\N{\mathbb{N}}
\def\Z{\mathbb{Z}}
\def\Co{\mathbb{C}}
\newtheorem{theorem}{Theorem}[section]
\newtheorem{remark}[theorem]{Remark}
\newtheorem{proposition}[theorem]{Proposition}
\newtheorem{lemma}[theorem]{Lemma}
\numberwithin{equation}{section}
\title{Sharp extension inequalities on finite fields}
\author[C. González-Riquelme]{Cristian González-Riquelme}
\email{cristian.g.riquelme@tecnico.ulisboa.pt}
\author[D. Oliveira e Silva]{Diogo Oliveira e Silva}
\email{diogo.oliveira.e.silva@tecnico.ulisboa.pt}\address{ 
Center for Mathematical Analysis, Geometry and Dynamical Systems \&
Departamento de Matem\'{a}tica\\ 
Instituto Superior T\'{e}cnico\\
Av. Rovisco Pais\\ 
1049-001 Lisboa, Portugal}
\begin{document}

\subjclass[2020]{05B25, 11T24, 12E20, 42B05, 58E15}
\keywords{Finite fields, Fourier extension problem, sharp restriction theory.}
\begin{abstract}
Sharp restriction theory and the finite field extension problem  have both received a great deal of attention in the last two decades, but so far they have not intersected. 
In this paper, we initiate the study of sharp restriction theory on finite fields. We prove that constant functions maximize the Fourier extension inequality from the parabola $\mathbb{P}^1\subset \mathbb{F}^{2\ast}_q$ and the paraboloid $\mathbb{P}^2\subset \mathbb{F}_q^{3\ast}$ at the euclidean Stein--Tomas endpoint; here, $\mathbb{F}_q^{d\ast}$ denotes the (dual) $d$-dimensional vector space over the finite field $\mathbb F_q$ with $q=p^n$ elements, where $p$ is a prime number greater than $3$ or $2$, respectively. We fully characterize the maximizers for the $L^2\to L^4$ extension inequality from $\mathbb{P}^2$ whenever $q\equiv 1(\textup{mod}\, 4)$.
Our methods lead to analogous results on the hyperbolic paraboloid, whose corresponding euclidean problem remains open.
We further establish that constants maximize the $L^2\to L^4$  extension inequality from the cone $\Gamma^3:=\{(\boldsymbol{\xi},\tau, \sigma)\in \mathbb{F}^{4\ast}_q: \tau\sigma=\boldsymbol{\xi}^2\}\setminus \{{\bf 0}\}$ whenever $q\equiv 3(\textup{mod}\, 4)$. By contrast, we prove that constant functions fail to be critical points for the corresponding inequality on $\Gamma^3\cup \{{\bf 0}\}$ over $\mathbb{F}_p^4$. 
While some inspiration is drawn from the euclidean setting,  entirely new phenomena emerge which are related to the underlying arithmetic and discrete structures.
\end{abstract}

\maketitle

\section{Introduction}
Stein's restriction problem in euclidean space \cite{St93} concerns  the possibility of restricting the Fourier transform of certain sufficiently nice functions  to {\it curved} null subsets of $\R^{d+1}$ like the paraboloid $\{(\boldsymbol{\xi},\tau)\in\R^d\times\R: \tau=|\boldsymbol{\xi}|^2\}$ and the cone $\{(\boldsymbol{\xi},\tau)\in\R^d\times\R: \tau^2=|\boldsymbol{\xi}|^2\}$.
The dual problem is formulated in terms of the adjoint restriction, or {\it extension}, operator. In turn, extension estimates from the paraboloid and the cone
are equivalent to the well-known Strichartz inequalities \cite{Str77} for the Schrödinger equation $iu_t=\Delta u$ with initial datum $u(0,\cdot)=f$,
\begin{equation}\label{eq_StrSchr}
     \|u\|_{L^{2+4/d}(\R^{d+1})} \leq {\bf S}_d \|f\|_{L^2(\R^d)},
\end{equation}
and for the  wave equation $u_{tt}=\Delta u$ with initial data $(u(0,\cdot),u_t(0,\cdot))=(f,g)$,
\begin{equation}\label{eq_StrWave}
\|u\|_{L^{2+4/(d-1)}(\R^{d+1})}\leq {\bf W}_d \|(f,g)\|_{\dot H^{1/2}(\R^d)\times\dot H^{-1/2}(\R^d)}.
\end{equation}
The restriction/extension problem has attracted widespread attention due to its deep links to problems in harmonic analysis (Bochner--Riesz), partial differential equations (local smoothing), geometric measure theory (Kakeya) and number theory (decoupling).
Conversely, progress in restriction theory has emerged via powerful tools from different fields, e.g., the multilinear approach which pivots on  higher notions of curvature and transversality \cite{BCT06, BG11} and the algebro-geometric approach known as the {\it polynomial method} \cite{Gu16,Gu18}.\\

In 2002, Mockenhaupt--Tao \cite{MT04} inaugurated the  study of the extension\footnote{The Kakeya problem over finite fields had been previously introduced by Wolff \cite{Wo99} in 1999, and was famously solved by Dvir \cite{Dv09} in 2008 using the polynomial method; see also \cite{EOT10}.} phenomenon for finite fields.
Given exponents $1\leq p,q\leq\infty$, let ${\bf R}^\ast_S(p\to q)$ denote the best constant for which the extension inequality
\begin{equation}\label{eq_FFExt}
    \|(f\sigma)^\vee\|_{L^q(\mathbb F^d,\textup d \boldsymbol{x})} \leq {\bf R}^\ast_S(p\to q) \|f\|_{L^p(S,\textup d \sigma)}
\end{equation}
holds for all functions $f:S\to\Co$ on a given nonempty set of frequencies $S\subset\mathbb F^{d\ast}$; here, $\mathbb F^{d\ast}$ is dual to the $d$-dimensional vector space $\mathbb F^d$ over the finite field $\mathbb F$, and $S$ is then called a ``surface''. In \eqref{eq_FFExt}, $(f\sigma)^\vee$ denotes the extension operator acting on $f$, $\textup d \boldsymbol{x}$ is the usual counting measure on $\mathbb F^d$ and $\textup d\sigma$ is the normalized surface measure on $S$; these concepts are defined in \S \ref{FAFF_sec} below.
The restriction problem for $S$ asks for the set of exponents $p,q$ such that ${\bf R}^\ast_S(p\to q)\leq C_{p,q}$, where $C_{p,q}$ does {\it not} depend on the underlying field $\mathbb F$.
In this case, we say that the restriction property ${\bf R}^\ast_S(p\to q)$ holds.
The authors of \cite{MT04} considered two particular surfaces: the {\it paraboloid} (we abbreviate $\boldsymbol{\xi}^2:=\boldsymbol{\xi}\cdot\boldsymbol{\xi}$)
\begin{equation}\label{eq_paraboloid}
    \mathbb P^{d}:=\{(\boldsymbol{\xi},\tau)\in \mathbb F^{d\ast}\times\mathbb F^\ast: \tau=\boldsymbol{\xi}^2\}  
\end{equation} 
when $d\in\{1,2\}$, and the {\it cone}
\begin{equation}\label{eq_coneF3}
    \Gamma^d:=\{(\boldsymbol{\xi},\tau,\sigma)\in \mathbb F^{(d-1)\ast}\times\mathbb F^\ast\times\mathbb F^\ast: \tau\sigma=\boldsymbol{\xi}^2\}\setminus \{{\bf 0}\},
\end{equation}   
when $d=2,$
both equipped with their normalized surface measures. They showed that the restriction property ${\bf R}^\ast_S(2\to 4)$ holds when 
 $S\in\{\mathbb P^1,\mathbb P^2,\Gamma^2\}$. 
Many authors followed, exploiting methods from additive combinatorics and incidence geometry, among others.   We  remark that there is no particularly natural notion of curvature in finite fields; to some extent, it may be replaced by the maximal size of affine subspaces, or the Witt index for a quadratic surface; see \cite{Le19} for details and a careful account of the state of the art concerning the finite field extension problem.
Other discrete models for the extension and the Kakeya phenomena have been proposed in \cite{HW18}; very recently, the Kakeya conjecture has been  established over rings of integers modulo $N$ \cite{Dh24, DD21}, the $p$-adics \cite{Ar24} and local fields of positive characteristic \cite{Sa23}.
\\

Sharp restriction theory aims at discovering the best constants and maximizers for extension-type inequalities like \eqref{eq_StrSchr} and \eqref{eq_StrWave}.
In 2006, Foschi \cite{Fo07} proved that gaussians maximize the cases $d\in\{1,2\}$ of \eqref{eq_StrSchr}, which respectively correspond to $L^2\to L^6$ and $L^2\to L^4$ extension inequalities.
In fact, all maximizers are given by initial data corresponding to the orbit of the Schrödinger propagator of the standard gaussian $\exp(-|\cdot|^2)$ under the galilean group of symmetries. In the same paper, Foschi proved that \eqref{eq_StrWave} is saturated by the pair $((1+|\cdot|^2)^{-1},0)$ if $d=3$, which corresponds to an $L^2\to L^4$ extension inequality. All maximizers are then obtained by letting the Poincaré group act on the wave propagator of $((1+|\cdot|^2)^{-1},0)$. In particular, the best constants ${\bf S}_1, {\bf S}_2, {\bf W}_3$ are  known.
Alternative proofs of some of these facts rely on 
representation formulae \cite{Ca09, HZ06}, 
heat flow monotonicity \cite{BBCH09} and
orthogonal polynomials \cite{Go19}, but they all ultimately hinge on the Lebesgue exponents in question being even integers. In this case, one can invoke Plancherel's identity in order to reduce the problem to a (simpler) multilinear convolution estimate.
Sharp restriction theory flourished, with many interesting works on submanifolds of codimension $1$ or $d-1$; see the recent survey \cite{NOST23} for the state of the art.\\

In the finite setting of Mockenhaupt--Tao \cite{MT04}, no extension inequality is yet known in sharp form.\footnote{ Interestingly, the proof of \cite[Theorem 3]{Le19} is based on a careful analysis of the near-maximizers for the Stein--Tomas inequality, but the goal there is to analyze concentration effects; see also \cite[\S17.5--6]{Le19}.}
In the present paper, we inaugurate the study of sharp restriction theory on finite fields, and proceed to describe our main results on the paraboloids $\mathbb P^1, \mathbb P^2$ and the cone $\Gamma^3$.
We are also able to handle the hyperbolic paraboloid $\mathbb H^2$ defined on \eqref{eq_defHP} below, and present some further results on the cone $\Upsilon^3$ defined on \eqref{eq_defCone2} below.\\

Let $p$ be an odd prime number and $q=p^n$ for some $n\in\mathbb N$. Let $\mathbb F_q$ denote the finite field with $q$ elements.

\subsection{Sharp parabolic extension}
Our first result establishes the sharp $L^2\to L^4$ extension inequality from the paraboloid $\mathbb P^2\subset \mathbb F_q^{3\ast}$ equipped with the normalized surface measure $\sigma= \sigma_{\mathbb P^2}$.

\begin{theorem}\label{thm1}
 It holds that
${\bf R}_{\mathbb P^2}^\ast(2\to 4)=(1+q^{-1}-q^{-2})^{\frac 14}$.     
 In other words, the inequality 
\begin{equation}\label{eq_sharpineq1}
      \|(f\sigma)^\vee\|_{L^4(\mathbb F_q^3,\textup d \boldsymbol{x})}^4 \leq \left(1+\frac1q-\frac1{q^2}\right)\|f\|_{L^2(\mathbb P^2,\textup d \sigma)}^4
\end{equation}  
 is sharp, and 
 equality holds if  $f:\mathbb P^2\to\Co$ is a constant function. 
 Moreover, any  maximizer of \eqref{eq_sharpineq1} has constant modulus.
\end{theorem}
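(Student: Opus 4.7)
The plan is to Plancherel the $L^4$ norm into an additive-energy sum on $\mathbb{P}^2$, parameterize that set by orthogonal pairs, and then reduce to two clean Cauchy--Schwarz inequalities which saturate simultaneously at constant functions. Expanding $|(f\sigma)^\vee(\boldsymbol{x})|^4$ and summing over $\boldsymbol{x}\in\mathbb{F}_q^3$ via character orthogonality yields the standard identity
\begin{equation*}
\|(f\sigma)^\vee\|_{L^4(\mathbb{F}_q^3)}^4 = q^{-5}\sum_{\substack{\xi_1,\xi_2,\xi_3,\xi_4 \in \mathbb{P}^2 \\ \xi_1+\xi_2=\xi_3+\xi_4}} f(\xi_1)f(\xi_2)\overline{f(\xi_3)f(\xi_4)}.
\end{equation*}
Writing $\xi_i=(u_i,u_i^2)$ with $u_i\in\mathbb{F}_q^2$ and setting $F(u):=f(u,u^2)$, the additive constraint becomes the pair $u_1+u_2=u_3+u_4$ and $u_1^2+u_2^2=u_3^2+u_4^2$. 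The change of variables $v_i:=u_i-u_4$ yields $v_4=0$ and $v_3=v_1+v_2$, and a short expansion (requiring only that $2\ne 0$ in $\mathbb{F}_q$) shows the second condition is equivalent to the orthogonality relation $v_1\cdot v_2=0$. Hence the additive-energy set is parameterized by triples $(u_4,v_1,v_2)\in(\mathbb{F}_q^2)^3$ with $v_1\cdot v_2=0$.

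Next, I split the sum according to whether $v_1=0$ or $v_2=0$. With $M:=\sum_u|F(u)|^2$ and $L:=\sum_u|F(u)|^4$, inclusion--exclusion gives the diagonal contribution $2M^2-L$. For the off-diagonal piece
\begin{equation*}
A(F):=\sum_{\substack{u_4,v_1,v_2\\ v_1,v_2\ne 0,\ v_1\cdot v_2=0}} F(u_4+v_1)F(u_4+v_2)\overline{F(u_4+v_1+v_2)F(u_4)},
\end{equation*}
I group the four factors as $[F(u_4+v_1)\overline{F(u_4)}]\cdot[F(u_4+v_2)\overline{F(u_4+v_1+v_2)}]$ and apply Cauchy--Schwarz in $(u_4,v_1,v_2)$. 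Using that $\{v_2\ne 0:v_1\cdot v_2=0\}$ has size $q-1$ for any nonzero $v_1$, both resulting factors evaluate to $(q-1)(M^2-L)$, so $|A(F)|\le(q-1)(M^2-L)$. Consequently, $\|(f\sigma)^\vee\|_{L^4}^4\le q^{-5}\bigl[(q+1)M^2-qL\bigr]$.

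The final ingredient is the elementary bound $M^2\le q^2 L$ (Cauchy--Schwarz applied to $\sum_u 1\cdot|F(u)|^2$), which yields $\|(f\sigma)^\vee\|_{L^4}^4 \le (1+q^{-1}-q^{-2})\|f\|_{L^2(\textup{d}\sigma)}^4$. A constant $f$ saturates each of the inequalities in the chain simultaneously, certifying both the value of the sharp constant and sharpness at constants. For the rigidity statement, any maximizer must achieve equality in $M^2\le q^2 L$; by the Cauchy--Schwarz case of equality, this forces $|F|$ to be constant on $\mathbb{F}_q^2$, i.e., $|f|$ is constant on $\mathbb{P}^2$. The only conceptual obstacle is recognizing the algebraic identity that turns additive quadruples on $\mathbb{P}^2$ into orthogonal pairs; once it is in place, the arithmetic of $q$ (e.g.\ $q\equiv 1\pmod 4$) plays no role here, and will enter only for the finer phase classification required by the full maximizer description.
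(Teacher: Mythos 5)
Your argument is correct, and it reaches the sharp constant by a genuinely different route from the paper. I checked the details: the identity $u_1^2+u_2^2-u_3^2-u_4^2=-2\,v_1\cdot v_2$ holds (only $2\neq 0$ is needed), the diagonal contributes $2M^2-L$ by inclusion--exclusion, both Cauchy--Schwarz factors for $A(F)$ equal $(q-1)(M^2-L)$ because $\{v_2\neq 0:\ v_1\cdot v_2=0\}$ has exactly $q-1$ elements for \emph{every} $v_1\neq 0$, and the normalizations match, so $(q+1)M^2-qL\le (q+1-q^{-1})M^2$ gives precisely \eqref{eq_sharpineq1}. The paper instead fibers the energy over the convolution output $(\boldsymbol{\xi},\tau)$ and applies Cauchy--Schwarz sphere by sphere; there the fiber over the critical surface $\{2\tau=\boldsymbol{\xi}^2\}$ is a sphere of squared radius zero, whose cardinality is $1$ or $2q-1$ according to $q\ (\textup{mod}\ 4)$, forcing a case split and, when $q\equiv 1\ (\textup{mod}\ 4)$, a further line decomposition with several intermediate estimates (\S\ref{subsectionq=1mod4}, Steps 1--4). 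Your reparametrization replaces the degenerate sphere count by the count of the punctured line $v_1^{\perp}\setminus\{0\}$, which is uniformly $q-1$, so the arithmetic of $q$ genuinely disappears --- a real simplification for Theorem \ref{thm1}. The trade-off shows up in the equality analysis: the paper's fiberwise inequalities produce pointwise functional equations (constancy of $f(\boldsymbol{\xi}_1)f(\boldsymbol{\xi}-\boldsymbol{\xi}_1)$ on each nondegenerate sphere, plus modulus and argument conditions on the degenerate ones) which are exactly the input to the classification of maximizers in Theorem \ref{thm2}, whereas your single global Cauchy--Schwarz yields only one proportionality relation between the two bracketed factors over the whole off-diagonal index set, from which the phase classification would have to be re-extracted. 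Since Theorem \ref{thm1} claims only the sharp constant and the constant-modulus property of maximizers, and the latter follows from saturating $M^2\le q^2L$, your proof is complete as stated.
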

Our second result fully characterizes the maximizers of inequality \eqref{eq_sharpineq1} whenever $q\equiv 1(\textup{mod}\, 4)$. The latter condition ensures that $-1$ is a square in $\mathbb F_q$.
Given $\boldsymbol{\xi}\in \mathbb F_q^{2\ast}$, we then write $\boldsymbol{\xi}=\eta(1,w)+\zeta(1,-w)$, where $w^2=-1$. 
We could use the canonical basis $\{(1,0),(0,1)\}$ of $\mathbb F_q^{2\ast}$ instead, but the lines spanned by $(1,\pm w)$ turn out to capture the geometry of the parabolic extension problem in a more transparent way.
We slightly abuse notation and identify a function  $f:\mathbb P^2\to\Co$  with its projection $f:\mathbb F_q^{2\ast}\to\Co$, $f(\boldsymbol{\xi})=f(\boldsymbol{\xi},\boldsymbol{\xi}^2)$.
 Finally, the trace map,
 $\textup{Tr}_n:\mathbb F_q\to \mathbb{F}_{p}$, is defined in \eqref{eq_defTrace} below.
\begin{theorem}\label{thm2}
     Let $q=p^n$ and  $w\in\mathbb F_q$ be such that $q\equiv 1(\textup{mod}\, 4)$ and $w^2=-1$.
     Then $f:\mathbb P^2\to\Co$ is a maximizer of \eqref{eq_sharpineq1} if and only if  
\begin{equation}\label{eq_MaxShape}
    {f}(\eta(1,w)+\zeta(1,-w))=\lambda \exp \frac{2\pi i \textup{Tr}_n(a\eta+b\zeta+c\eta\zeta)}{p},
\end{equation}
 for some $\lambda\in\Co\setminus \{0\}$ and  $a,b,c\in \mathbb{F}_q$.
\end{theorem}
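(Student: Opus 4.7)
The plan is to refine the Cauchy--Schwarz argument underlying Theorem \ref{thm1} and track its equality conditions. In the coordinates $\boldsymbol{\xi}=\eta(1,w)+\zeta(1,-w)$, which diagonalize the parabolic form as $\boldsymbol{\xi}^2=4\eta\zeta$, and writing $g(\eta,\zeta):=f(\eta(1,w)+\zeta(1,-w))$, the dual change of variables $(y_1,y_2,y_3):=(x_1+wx_2,\,x_1-wx_2,\,4x_3)$ turns the extension operator into
\[
(f\sigma)^\vee(x) = q^{-2}\sum_{\eta,\zeta\in\mathbb F_q}g(\eta,\zeta)\,\chi(y_1\eta+y_2\zeta+y_3\eta\zeta),
\]
where $\chi(z)=\exp(2\pi i\textup{Tr}_n(z)/p)$. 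Expanding $|(f\sigma)^\vee|^4$ and applying Plancherel first in $(y_1,y_2)$ and then in $y_3$ should reduce the $L^4$ norm to a constant multiple of
\[
\sum_{u,v,t\in\mathbb F_q}|K_g(u,v;t)|^2,\qquad K_g(u,v;t):=\sum_{\substack{\eta,\zeta\in\mathbb F_q\\ u\zeta+v\eta=t}}g(\eta,\zeta)\overline{g(\eta-u,\zeta-v)}.
\]
The $(u,v)=(0,0)$ term equals $\|g\|_{L^2}^4$, while for $(u,v)\neq(0,0)$ Cauchy--Schwarz bounds $|K_g(u,v;t)|^2\leq q\sum_{u\zeta+v\eta=t}|g(\eta,\zeta)|^2|g(\eta-u,\zeta-v)|^2$; summing over $t,u,v$ and combining with the elementary inequality $\|g\|_{L^4}^4\geq q^{-2}\|g\|_{L^2}^4$ should recover the sharp constant of Theorem \ref{thm1}.

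For a maximizer $f$, each inequality in the chain above must be an equality. The power-mean step forces $|g|$ to be a nonzero constant; normalize so that $|g|\equiv 1$. Equality in the Cauchy--Schwarz step then forces, for every $(u,v)\in\mathbb F_q^2\setminus\{(0,0)\}$ and every $t\in\mathbb F_q$, the discrete derivative $D_{(u,v)}g(\eta,\zeta):=g(\eta,\zeta)\overline{g(\eta-u,\zeta-v)}$ to be constant along the line $\{u\zeta+v\eta=t\}$. Writing $g=\exp(2\pi i\phi)$ with $\phi\colon\mathbb F_q^2\to\mathbb R/\mathbb Z$, this translates into the functional equation
\[
\Delta_{(u,v)}\phi(\eta,\zeta):=\phi(\eta,\zeta)-\phi(\eta-u,\zeta-v)\equiv\Psi_{u,v}(u\zeta+v\eta)\pmod{1}
\]
for some $\Psi_{u,v}\colon\mathbb F_q\to\mathbb R/\mathbb Z$.

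To solve the functional equation, use that discrete differences commute: $\Delta_{(u,v)}\Delta_{(u',v')}\phi=\Delta_{(u',v')}\Delta_{(u,v)}\phi$. When $(u,v)$ and $(u',v')$ are linearly independent, the pair $(u\zeta+v\eta,\,u'\zeta+v'\eta)$ ranges over all of $\mathbb F_q^2$ as $(\eta,\zeta)$ varies; substituting the functional equation and matching both sides forces $\Psi_{u,v}(s)-\Psi_{u,v}(s-c)$ to be constant in $s$ for each fixed $c\in\mathbb F_q$, so $\Psi_{u,v}(s)=\alpha(s)+\gamma_{u,v}$ with $\alpha\colon\mathbb F_q\to\mathbb R/\mathbb Z$ an additive homomorphism independent of $(u,v)$. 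By the standard duality $\widehat{\mathbb F_q}\cong\mathbb F_q$ via the trace, $\alpha(s)=\textup{Tr}_n(cs)/p\pmod{1}$ for a unique $c\in\mathbb F_q$. Specializing $(u,v)$ to $(u,0)$ and then to $(0,v)$, the same commutativity technique shows $\gamma_{\eta,0}$ additive in $\eta$ and $\gamma_{0,\zeta}$ additive in $\zeta$, hence $\gamma_{\eta,0}=\textup{Tr}_n(a\eta)/p$ and $\gamma_{0,\zeta}=\textup{Tr}_n(b\zeta)/p$ for some $a,b\in\mathbb F_q$. Reassembling $\phi(\eta,\zeta)=\phi(\eta,0)+\alpha(\eta\zeta)+\gamma_{0,\zeta}$ yields $\phi(\eta,\zeta)\equiv\phi(0,0)+\textup{Tr}_n(a\eta+b\zeta+c\eta\zeta)/p\pmod{1}$, which is precisely \eqref{eq_MaxShape} with $\lambda\in\mathbb C\setminus\{0\}$ absorbing both the overall modulus and the phase $e^{2\pi i\phi(0,0)}$.

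The converse implication --- that every $f$ of the form \eqref{eq_MaxShape} saturates \eqref{eq_sharpineq1} --- reduces to checking that $|K_g(u,v;t)|=q$ uniformly in $t$ for each $(u,v)\neq(0,0)$ when $g=\chi(a\eta+b\zeta+c\eta\zeta)$, a direct character computation on a line in $\mathbb F_q^2$. I expect the main technical obstacle to be the commutativity step that isolates $\alpha$ as independent of $(u,v)$: the identity $\alpha_{u,v}(c)=\alpha_{u',v'}(c)$ is produced directly only when $c=uv'+vu'$ and $(u,v),(u',v')$ are linearly independent, so propagating the common value across the full parameter space will require a connectedness argument on the associated graph on $\mathbb F_q^2\setminus\{0\}$ (routine for $c\neq 0$; the case $c=0$ is automatic since $\alpha(0)=0$).
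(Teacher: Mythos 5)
Your proposal is correct, but it follows a genuinely different route from the paper. The paper decomposes the $L^4$ norm according to the fibers of the sum map $\boldsymbol{\xi}_1+\boldsymbol{\xi}_2=\boldsymbol{\xi}$, which are spheres of cardinality $2q-1$ on the critical surface and $q-1$ off it; this forces the delicate line-decomposition/mass-transport analysis of \S\ref{subsectionq=1mod4}, and the maximizers are then extracted from the equality cases of those intermediate steps (the midpoint functional equation $\rho(x)\rho(y)=\rho(\tfrac{x+y}{2})^2$ solved by Lemma \ref{middlepointlemma2to4}, a basis argument, and finally the identity $L(u)=L(1)u$ coming from the generic spheres). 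You instead pass to null coordinates $\eta,\zeta$ (which is why $q\equiv 1 \,(\textup{mod }4)$ is needed) and apply Plancherel in the dual variables, which reorganizes the same quadruple count by the \emph{difference} $(u,v)$ and the value of $u\zeta+v\eta$; every fiber is then a single affine line of exactly $q$ points, so a single Cauchy--Schwarz per fiber already yields the sharp constant $1+q-q^{-1}$ — your computation in fact gives an independent and cleaner proof of Theorem \ref{thm1} in this congruence class, with no critical/generic dichotomy. The resulting equality condition (constancy of $g(\eta,\zeta)\overline{g(\eta-u,\zeta-v)}$ along every line $u\zeta+v\eta=t$, for every $(u,v)\neq(0,0)$) is stronger and more symmetric than the paper's, and your cocycle/telescoping solution of it is sound. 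Concerning the one step you flag as a potential obstacle: it does go through. For each fixed $\delta\neq 0$, taking $(u,v)=(U,V)$ with $UV\neq 0$ and pairing it with $(0,\delta/U)$ and with $(\delta/V,0)$ (both linearly independent from $(U,V)$ and both producing the same $\delta=uv'+vu'$) shows that $\alpha_{U,V}(\delta)=\alpha_{0,s}(\delta)=\alpha_{r,0}(\delta)$ for \emph{all} nonzero $U,V,s,r$, so the common value propagates in two steps and $\alpha_{u,v}=\alpha$ is indeed independent of $(u,v)$; the rest of your reassembly, and the converse verification that $|K_g(u,v;t)|=q$ for $g=\chi(a\eta+b\zeta+c\eta\zeta)$, are routine as you say. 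The trade-off is that the paper's method does not require $-1$ to be a square in the ambient field (it is also used for $q\equiv 3\,(\textup{mod }4)$ in Theorem \ref{thm1}), whereas yours is cleaner precisely where the splitting into two null directions over $\mathbb F_q$ is available — which is exactly the hypothesis of Theorem \ref{thm2}, and also covers the hyperbolic paraboloid of Theorem \ref{thm4} uniformly.
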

Our third result establishes the sharp $L^2\to L^6$ extension inequality from the parabola  $\mathbb P^1\subset \mathbb F_q^{2\ast}$ equipped with normalized surface measure $\sigma=\sigma_{\mathbb P^1}$. This requires $\textup{char}(\mathbb F_q)=p>3$.
\begin{theorem}\label{thm3}
Let $p>3$.
 It holds that
${\bf R}_{\mathbb P^1}^\ast(2\to 6)=(1+q^{-1}-q^{-2})^{\frac 16}$.     
 In other words, the inequality 
\begin{equation}\label{eq_sharpineq2}
      \|(f\sigma)^\vee\|_{L^6(\mathbb F_q^2,\textup d \boldsymbol{x})}^6 \leq \left(1+\frac1q-\frac1{q^2}\right)\|f\|_{L^2(\mathbb P^1,\textup d \sigma)}^6
\end{equation}  
 is sharp, and 
 equality holds if  $f:\mathbb P^1\to\Co$ is a constant function. 
 Moreover, any  maximizer of \eqref{eq_sharpineq2}  has constant modulus. 
\end{theorem}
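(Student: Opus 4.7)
The plan is to adapt the approach used for Theorem \ref{thm1}. Since $6$ is an even integer, Plancherel's identity on $\mathbb{F}_q^2$ gives
\begin{equation*}
q^4\|(f\sigma)^\vee\|_{L^6}^6 = \|\mu\ast\mu\ast\mu\|_{L^2(\mathbb{F}_q^2)}^2,
\end{equation*}
where $\mu := f\,d\sigma$ is viewed as a function on $\mathbb{F}_q^2$ supported on $\P^1$. Expanding the right-hand side via orthogonality of additive characters reformulates the problem as a weighted count of ordered sextuples $(\xi_1,\ldots,\xi_6)\in\mathbb{F}_q^6$ subject to the parabolic constraints $\xi_1+\xi_2+\xi_3=\xi_4+\xi_5+\xi_6$ and $\xi_1^2+\xi_2^2+\xi_3^2=\xi_4^2+\xi_5^2+\xi_6^2$, with weights $f(\xi_1)f(\xi_2)f(\xi_3)\overline{f(\xi_4)f(\xi_5)f(\xi_6)}$.

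I would then apply a fibrewise Cauchy--Schwarz:
\begin{equation*}
|(\mu\ast\mu\ast\mu)(u,v)|^2 \leq N(u,v)\sum_{\substack{\xi_1+\xi_2+\xi_3=u \\ \xi_1^2+\xi_2^2+\xi_3^2=v}}|f(\xi_1)f(\xi_2)f(\xi_3)|^2
\end{equation*}
for every $(u,v)\in\mathbb{F}_q^2$, where $N(u,v)$ is the fibre count. A quadratic character-sum evaluation (with $p>3$ ensuring that the relevant leading coefficient $-3$ is invertible in $\mathbb{F}_q$) yields $N(u,v)=q-\chi(-3)$ when $3v\neq u^2$ and $N(u,v)=q+(q-1)\chi(-3)$ on the exceptional set $\{(u,v):3v=u^2\}$, where $\chi$ denotes the quadratic character of $\mathbb{F}_q$. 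Writing $A:=\sum_\xi|f(\xi)|^2$, summation then gives
\begin{equation*}
\|\mu\ast\mu\ast\mu\|_{L^2}^2 \leq (q-\chi(-3))\,A^3 + q\chi(-3) \sum_{R'} |f(\xi_1)f(\xi_2)f(\xi_3)|^2,
\end{equation*}
where $R':=\{(\xi_1,\xi_2,\xi_3)\in\mathbb{F}_q^3 : (\xi_1-\xi_2)^2+(\xi_2-\xi_3)^2+(\xi_1-\xi_3)^2=0\}$.

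When $\chi(-3)=-1$ (i.e., $q\not\equiv 1\pmod 3$), the binary quadratic form $x^2+xy+y^2$ is anisotropic over $\mathbb{F}_q$, so $R'$ collapses to the diagonal $\{(a,a,a): a\in\mathbb{F}_q\}$. Hence $\sum_{R'}|f(\xi_1)f(\xi_2)f(\xi_3)|^2=\sum_\xi|f(\xi)|^6$, and the power-mean inequality $\sum_\xi|f(\xi)|^6\geq q^{-2}A^3$ (equality iff $|f|$ is constant) closes the bound with the sharp constant $1+q^{-1}-q^{-2}$. Equality in the fibrewise Cauchy--Schwarz additionally demands that $f(\xi_1)f(\xi_2)f(\xi_3)$ be constant along each parabolic fibre, which, together with constancy of $|f|$, identifies maximizers as constant-modulus functions.

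The principal obstacle is the complementary case $\chi(-3)=1$ (so $q\equiv 1\pmod 3$): now $\mathbb{F}_q$ contains a primitive cube root of unity $\omega$, and $R'$ splits as the union of two $2$-planes meeting along the diagonal. The fibrewise Cauchy--Schwarz, although tight at constants, is insufficient here, since the intermediate estimate $\sum_{R'}|f(\xi_1)f(\xi_2)f(\xi_3)|^2\leq (2q-1)q^{-2}A^3$ fails for highly concentrated $f$ (e.g., $f=\delta_{\xi_0}$). I plan to handle this case by switching to the DFT-type coordinates $(s,u,v):=(\xi_1+\xi_2+\xi_3,\,\xi_1+\omega\xi_2+\omega^2\xi_3,\,\xi_1+\omega^2\xi_2+\omega\xi_3)$, in which the identity $\xi_1^2+\xi_2^2+\xi_3^2=(s^2+2uv)/3$ turns the parabolic fibre over $(s,p_2)$ into the hyperbola $\{uv=(3p_2-s^2)/2\}$ and $R'$ into $\{uv=0\}$. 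A refined estimate along these hyperbolic fibres, tight at constants, should recover the sharp bound, with the constant-modulus conclusion following by tracking equality cases.
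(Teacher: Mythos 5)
Your reduction to the sextuple count, the fibre sizes $N(u,v)=q-\chi(-3)$ off the critical curve and $q+(q-1)\chi(-3)$ on it, and the resulting bound $(q-\chi(-3))A^3+q\chi(-3)\sum_{R'}|f(\xi_1)f(\xi_2)f(\xi_3)|^2$ all agree with the paper (Propositions \ref{prop_counting} and \ref{propositionparaboloidfinitefields} and the mass-transport step \eqref{firstcauchyschwarz2to6}). Your treatment of $q\equiv 2(\textup{mod }3)$ is complete and is exactly the paper's \S\ref{sec_thm3}: anisotropy of $x^2+xy+y^2$ collapses $R'$ to the diagonal, and H\"older closes the bound with the correct equality analysis. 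You have also correctly diagnosed, as in Remarks \ref{remarkexamplelines} and \ref{remarkexamplediracdelta}, why a crude Cauchy--Schwarz with constant $2q-1$ fails when $q\equiv 1(\textup{mod }3)$.

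However, for $q\equiv 1(\textup{mod }3)$ your proposal stops at a change of coordinates and the assertion that ``a refined estimate along these hyperbolic fibres, tight at constants, should recover the sharp bound.'' This is precisely where the entire difficulty lies, and no such estimate is supplied; being ``tight at constants'' is the conclusion you need, not a mechanism for obtaining it. Concretely, after identifying the critical fibre over $(\xi,\xi^2/3)$ as the union of two lines $\mathcal L_\pm(\xi)$ meeting at $\tfrac13(\xi,\xi,\xi)$ (your set $\{uv=0\}$), one must control
\[
\Bigl|\sum_{\mathcal L_+\cup\mathcal L_-}fff\Bigr|^2-(q-1)\sum_{\mathcal L_+\cup\mathcal L_-}|fff|^2,
\]
and the obstruction is the cross terms: the interaction between the two lines and the interaction of each line with the vertex value $f(\xi/3)^3$. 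The paper's argument needs (i) the permutation symmetry \eqref{permutation2to6}, special to the three-fold convolution, which makes the two line-sums literally equal and turns the two-line problem into a one-line problem with an extra factor of $4$; (ii) Cauchy--Schwarz on a single punctured line plus a triangle inequality for the vertex term; and (iii) a delicate resummation over $\xi$ using the explicit bijections $(\omega_1,\omega_2,\omega_3):\mathcal B\to\mathcal B'$ and the identity $\omega_6=\omega_1$, followed by an AM--GM step and a change of variables showing $\sum_{\xi_1\neq\xi_2}|f(\omega_1)|^2|f(\tfrac{\xi_1+\xi_2+\omega_1}{3})|^4=\sum_{\xi_1\neq\xi_2}|f(\xi_1)|^2|f(\xi_2)|^4$, so that everything collapses to $2A^3-q\sum|f|^6$. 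None of this bookkeeping is routine, and nothing in your hyperbolic-coordinate picture substitutes for it. As written, the case $q\equiv 1(\textup{mod }3)$ — which is the main content of the theorem — is not proved.
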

Theorems \ref{thm1}, \ref{thm2} and \ref{thm3} are finite field analogues of Foschi's results \cite[Theorems 1.1 and 1.4]{Fo07} for the euclidean paraboloid. The crucial observation there was that the relevant convolutions of the projection measure on the paraboloid are constant {\it in the interior of their support}, whereas the boundary values can  be safely disregarded since they are attained in a null set. In the case of finite fields, substantial complications arise which are entirely new. These are due to the finite nature of $\mathbb{F}_q$, which makes boundary terms non-negligible, and to various arithmetic matters which we proceed to describe. 

The proofs of Theorems \ref{thm1} and \ref{thm3} share a common high-level structure which can be summarized as follows. After rewriting the problem in convolution form,  we compute the relevant $k$-fold convolution measures, $k\in\{2,3\}$. 
In both cases, the convolution attains two distinct values, one along the so-called {\it critical set}, denoted $\mathcal{C}$, and another one on the remaining  {\it generic set}.  A direct application of the Cauchy--Schwarz inequality suffices to handle the generic set. Our main effort is geared towards a careful estimate of the terms associated to the critical set. The need arises for a distinction between the cases $q\equiv 1\,\text{ or }\, 3\,(\textup{mod}\, 4)$ on $\mathbb{P}^2$ and $q\equiv 1\, \text{ or }\, 2\,(\textup{mod}\, 3)$ on $\mathbb{P}^1$. 
In fact, the set of $k$-tuples of points on the paraboloid which sum to a given $(\boldsymbol{\xi},\tau)$,
\begin{equation}\label{eq_defSigmaSet}
    \Sigma_{\mathbb P^d}^k(\boldsymbol{\xi},\tau):=\left\{(\boldsymbol{\xi}_i)_{i=1}^k\in (\mathbb F_q^{d\ast})^k: \sum_{i=1}^k (\boldsymbol{\xi}_i,\boldsymbol{\xi}_i^2)=(\boldsymbol{\xi},\tau)\right\},
\end{equation}
is a singleton whenever $(\boldsymbol{\xi},\tau)\in\mathcal C$, provided
 $q\equiv  3(\textup{mod}\, 4)$ and $(d,k)=(2,2)$, or   $q\equiv  2 (\textup{mod}\, 3)$ and $(d,k)=(1,3)$. This simplifies the analysis considerably. Estimating the terms associated to $\mathcal C$ in the remaining cases is much more delicate, since adequate bounds for the relevant quantity,
\begin{align}\label{criticalcurveintro}
\sum_{(\boldsymbol{\xi},\tau)\in \mathcal{C}}\left(\left|\underset{{(\boldsymbol{\xi}_i)_{i=1}^k\in \Sigma_{\mathbb P^d}^k(\boldsymbol{\xi},\tau)}}{\sum}\prod_{i=1}^{k}f(\boldsymbol{\xi}_i,\boldsymbol{\xi}_i^2)\right|^2
-(q-1)\underset{{(\boldsymbol{\xi}_i)_{i=1}^k\in \Sigma_{\mathbb P^d}^k(\boldsymbol{\xi},\tau)}}{\sum}\left|\prod_{i=1}^kf(\boldsymbol{\xi}_i,\boldsymbol{\xi}_i^2)\right|^2\right),
\end{align}
do not follow from any straightforward application of the Cauchy--Schwarz inequality; see Remarks \ref{remarkexamplelines} and \ref{remarkexamplediracdelta} below.
At this point, the analysis for $\mathbb{P}^2$ and $\mathbb{P}^1$ splits, as a detailed understanding of the geometry of the sets  $\Sigma_{\mathbb P^d}^k(\boldsymbol{\xi},\tau)$ for $(\boldsymbol{\xi},\tau)\in \mathcal{C}$ is required. 

As far as the proof of Theorem \ref{thm2} is concerned, we note that the euclidean methods from \cite[\S 7]{Fo07} do not seem adequate to handle finite fields.
Instead, we study the cases of equality for all intermediate inequalities required to estimate \eqref{criticalcurveintro}, and then bring in the constraint coming from the initial Cauchy--Schwarz step for the generic set. The structure of the sets $\Sigma_{\mathbb P^d}^k(\boldsymbol{\xi},\tau)$ for $(\boldsymbol{\xi},\tau)\in\mathcal C$ again plays a key role. 

Our methods are quite robust. Surprisingly,  the proof of Theorem \ref{thm1} when $q\equiv \, 1(\textup{mod}\, 4)$ can be modified to yield the  sharp $L^2\to L^4$ extension inequality from the {\it hyperbolic paraboloid},
\begin{equation}\label{eq_defHP}
    \mathbb H^2:=\{(\xi_1,\xi_2,\tau)\in \mathbb{F}^{3\ast}_q:\, \tau=\xi_1^2-\xi_2^2\},
\end{equation}
equipped with the normalized surface measure $\sigma=\sigma_{\mathbb H^2}$, together with the complete characterization of maximizers. This is the content of our fourth result.
We highlight that the corresponding euclidean problem remains open \cite{COS22, DMPS18}. 

\begin{theorem}\label{thm4}
     It holds that
${\bf R}_{\mathbb H^2}^\ast(2\to 4)=(1+q^{-1}-q^{-2})^{\frac 14}$.     
 In other words, the inequality 
\begin{equation}\label{eq_sharpineq4}
      \|(f\sigma)^\vee\|_{L^4(\mathbb F_q^3,\textup d \boldsymbol{x})}^4 \leq \left(1+\frac1q-\frac1{q^2}\right)\|f\|_{L^2(\mathbb H^2,\textup d \sigma)}^4
\end{equation}  
 is sharp, and 
 equality holds if  $f:\mathbb H^2\to\Co$ is a constant function. 
 Moreover, letting $q=p^n$, then
 $f:\mathbb H^2\to\Co$ is a maximizer of \eqref{eq_sharpineq4} if and only if  
\begin{equation}\label{eq_MaxShapeHyp}
    {f}(\eta(1,1)+\zeta(1,-1))=\lambda \exp \frac{2\pi i \textup{Tr}_n(a\eta+b\zeta+c\eta\zeta)}{p},
\end{equation}
 for some $\lambda\in\Co\setminus \{0\}$ and  $a,b,c\in \mathbb{F}_q$.
\end{theorem}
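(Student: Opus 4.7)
The plan is to reduce Theorem \ref{thm4} directly to Theorems \ref{thm1} and \ref{thm2} via a linear change of variables that converts the hyperbolic paraboloid into the same ``product paraboloid'' that the elliptic paraboloid becomes when $q\equiv 1\pmod 4$. For $\boldsymbol{\xi}\in \mathbb{F}_q^{2\ast}$, write $\boldsymbol{\xi} = \eta(1,1)+\zeta(1,-1)$, so that $\xi_1=\eta+\zeta$ and $\xi_2=\eta-\zeta$, whence $\xi_1^2-\xi_2^2 = 4\eta\zeta$. In these coordinates, $\mathbb{H}^2$ takes the form $\{(\eta,\zeta,4\eta\zeta) : (\eta,\zeta)\in\mathbb{F}_q^{2\ast}\}$, which is precisely the shape of $\mathbb{P}^2$ in the coordinates $\boldsymbol{\xi}=\eta(1,w)+\zeta(1,-w)$ used in Theorem \ref{thm2}, where $w^2=-1$ supplies the factorization $\xi_1^2+\xi_2^2=4\eta\zeta$. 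The decisive point is that the hyperbolic factorization requires no congruence condition on $q$, so this ``product form'' is available over every odd $\mathbb{F}_q$. Since the change of variables is an $\mathbb{F}_q$-linear bijection of $\mathbb{F}_q^{2\ast}$, it preserves counting measure, intertwines the Fourier transform with its pullback along the transpose map, and thereby transforms the extension inequality \eqref{eq_sharpineq4} into the corresponding product-paraboloid inequality.

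At the analytic level, the proof of Theorem \ref{thm1} proceeds by rewriting $\|(f\sigma)^\vee\|_4^4$ in convolution form via Plancherel and computing the two-fold convolution $\sigma_{\mathbb{P}^2}\ast\sigma_{\mathbb{P}^2}$. Under the change of variables above, $\sigma_{\mathbb{H}^2}\ast\sigma_{\mathbb{H}^2}$ coincides with its elliptic counterpart in the $q\equiv 1\pmod 4$ case: it takes one value on the critical set $\mathcal{C}$ and a different value on its complement. The generic-set contribution is then controlled by a straightforward Cauchy--Schwarz argument, producing the sharp constant $(1+q^{-1}-q^{-2})$, while the more delicate critical-set contribution \eqref{criticalcurveintro} is treated exactly as for $\mathbb{P}^2$, since that estimate relies only on the structure of the solution sets $\Sigma^2(\boldsymbol{\xi},\tau)$, which is preserved by the coordinate change.

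For the characterization of maximizers, I would track the equality conditions through each step of the proof of Theorem \ref{thm2}: saturating Cauchy--Schwarz on the generic set forces $|f|$ to be constant, while saturating the estimates on the critical set constrains the phase to be a quadratic exponential in the $(\eta,\zeta)$-coordinates, namely $\lambda\exp(2\pi i\,\textup{Tr}_n(a\eta+b\zeta+c\eta\zeta)/p)$. Transporting back through the linear bijection produces the desired form \eqref{eq_MaxShapeHyp}. The principal obstacle is a careful audit of the proof of Theorem \ref{thm2} to confirm that no step secretly invokes $w^2=-1$ beyond its role in establishing the factorization $4\eta\zeta$; once that is verified—essentially a bookkeeping check, since all subsequent arguments use only the product structure—the entire argument ports verbatim and yields Theorem \ref{thm4} over every odd $\mathbb{F}_q$, without any restriction on $q\pmod 4$.
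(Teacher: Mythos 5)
Your proposal is correct and follows essentially the same route as the paper: the paper likewise observes that $\mathbb H^2$ factors as $\tau=4\eta\zeta$ along the lines spanned by $(1,\pm 1)$ over every odd $\mathbb F_q$, and then reruns the $q\equiv 1(\textup{mod}\,4)$ arguments for $\mathbb P^2$ from Theorems \ref{thm1} and \ref{thm2} with $w$ replaced by $1$. Your packaging of this as a linear change of variables to a common product-paraboloid normal form is the same idea, and your ``audit'' of the proof of Theorem \ref{thm2} is precisely the verification the paper carries out in \S\ref{sec_Thm4}.
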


\subsection{Sharp conic extension}
 The restriction property  ${\bf R}^\ast_{\Gamma^3}(2\to 4)$  is  known to hold \cite{KLP21}.
Our fifth result establishes the sharp $L^2\to L^4$ extension inequality from the cone $\Gamma^3\subset \mathbb F_q^{4\ast}$ equipped with the normalized surface measure $\nu= \nu_{\Gamma}$ whenever $q\equiv\, 3(\textup{mod}\, 4)$.
\begin{theorem}\label{thm5}
Let $q\equiv\, 3(\textup{mod}\, 4)$.
 It holds that
\[{\bf R}_{\Gamma^3}^\ast(2\to 4)=(1-2q^{-1}+2q^{-2}-3q^{-4}+3q^{-5})^{\frac 14}(1-q^{-1})^{-\frac34}(1+q^{-2})^{-\frac34}.\]     
 In other words, the inequality 
\begin{equation}\label{eq_sharpineq5}
      \|(f\nu)^\vee\|_{L^4(\mathbb F_q^4,\textup d \boldsymbol{x})}^4 \leq \frac{q^4(q^5-2q^4+2q^3-3q+3)}{(q-1)^3(q^2+1)^3} \|f\|_{L^2(\Gamma^3,\textup d \nu)}^4
\end{equation}  
 is sharp, and 
 equality holds if  $f: \Gamma^3\to\Co$ is a constant function. 
 Moreover, any  maximizer of \eqref{eq_sharpineq5}  has constant modulus.
\end{theorem}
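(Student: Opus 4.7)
\emph{Plan.} I would follow the same broad template used for Theorem \ref{thm1}, adapted to the conic geometry. Expanding $\|(f\nu)^\vee\|_{L^4}^4 = \||(f\nu)^\vee|^2\|_{L^2}^2$ and invoking Plancherel turns the inner square into the autocorrelation $(f\nu)\ast \widetilde{(f\nu)}$, recasting the left-hand side of \eqref{eq_sharpineq5} as a weighted sum over the difference set of $\Gamma^3$:
\[
 \|(f\nu)^\vee\|_{L^4}^4 \;=\; C_q \sum_{\boldsymbol{z} \in \mathbb{F}_q^{4\ast}} \Big| \sum_{(\omega_1,\omega_2) \in \Sigma_{\Gamma^3}^2(\boldsymbol{z})} f(\omega_1)\overline{f(\omega_2)} \Big|^2,
\]
where $\Sigma_{\Gamma^3}^2(\boldsymbol{z}) := \{(\omega_1,\omega_2) \in \Gamma^3\times\Gamma^3 : \omega_1 - \omega_2 = \boldsymbol{z}\}$ and $C_q$ is an explicit normalization involving $|\Gamma^3| = (q-1)(q^2+1)$.

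Next I would compute the counting function $N(\boldsymbol{z}) := |\Sigma_{\Gamma^3}^2(\boldsymbol{z})|$ in closed form. Writing $\omega_1 = (\boldsymbol{\xi}_1,\tau_1,\sigma_1)$ with $\tau_1\sigma_1 = \boldsymbol{\xi}_1^2$ and imposing $\omega_2 = \omega_1 - \boldsymbol{z} \in \Gamma^3$ reduces to a single quadratic equation in $(\boldsymbol{\xi}_1,\tau_1,\sigma_1)$. The assumption $q\equiv 3\,(\textup{mod}\,4)$ makes $\boldsymbol{\xi}^2 = \xi_1^2 + \xi_2^2$ anisotropic, so this quadric is either nondegenerate or collapses to an isolated point, and $N$ takes only two values: a generic value on $\mathbb{F}_q^{4\ast}\setminus \mathcal{C}$ and a strictly larger value on a critical set $\mathcal{C}$ that I expect to be $\{\mathbf{0}\}\cup \Gamma^3$ (the only translations by which $\Gamma^3$ intersects itself along a positive-dimensional stratum). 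The anisotropy is precisely what prevents the emergence of the "hard" stratum encountered in \eqref{criticalcurveintro}, placing the present theorem in the same easier regime as the $\mathbb{P}^2$ case for $q\equiv 3\,(\textup{mod}\,4)$.

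The third step is a sharp application of Cauchy--Schwarz on each fiber,
\[
 \Big| \sum_{(\omega_1,\omega_2) \in \Sigma_{\Gamma^3}^2(\boldsymbol z)} f(\omega_1)\overline{f(\omega_2)} \Big|^2 \;\leq\; N(\boldsymbol{z}) \sum_{(\omega_1,\omega_2) \in \Sigma_{\Gamma^3}^2(\boldsymbol z)} |f(\omega_1)|^2|f(\omega_2)|^2.
\]
Summing in $\boldsymbol z$ and splitting according to $\mathbb{F}_q^{4\ast} = (\mathbb{F}_q^{4\ast}\setminus\mathcal C) \sqcup \mathcal C$, the generic portion collapses to a multiple of $\|f\|_{L^2(d\nu)}^4$ via the identity $\sum_{\boldsymbol z}\sum_{\Sigma_{\Gamma^3}^2(\boldsymbol z)} |f(\omega_1)|^2|f(\omega_2)|^2 = \|f\|_{L^2(d\nu)}^4$ (up to measure normalization), while the critical portion produces an additional diagonal contribution of the form $|\Gamma^3|^{-1}\sum_{\omega\in\Gamma^3}|f(\omega)|^4$, which is estimated against $\|f\|_{L^2(d\nu)}^4$ by Cauchy--Schwarz in a sign-definite way. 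Recombining yields an inequality whose constant is forced to equal the one in \eqref{eq_sharpineq5}, since the entire argument saturates on $f\equiv 1$; I would verify this numerology independently by computing $\|(\nu)^\vee\|_{L^4}^4$ directly and matching it against $(q^5-2q^4+2q^3-3q+3)/((q-1)^3(q^2+1)^3)$, which doubles as an arithmetic sanity check. Finally, equality in the fiberwise Cauchy--Schwarz on any single generic $\boldsymbol z$ forces $|f(\omega_1)|=|f(\omega_2)|$ for every $(\omega_1,\omega_2)\in \Sigma_{\Gamma^3}^2(\boldsymbol z)$, and the connectedness of the resulting graph on $\Gamma^3$ upgrades this to $|f|\equiv \textup{const}$.

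\emph{Anticipated difficulties.} The principal obstacle I foresee is the exact computation of $N(\boldsymbol z)$ on and just off $\mathcal C$: the cone has singular behavior at $\mathbf 0$ and along its rulings, and one must carefully track the exclusion of $\mathbf 0$ from $\Gamma^3$ when $\omega_1$ or $\omega_2$ lies at these degenerate loci. A secondary obstacle is matching the explicit polynomial constant $(q^5-2q^4+2q^3-3q+3)$, which demands an essentially error-free enumeration of $\mathcal C$ and of $|\Gamma^3\cap(\Gamma^3+\boldsymbol z)|$ for $\boldsymbol z\in\mathcal C\setminus\{\mathbf 0\}$; any slip propagates through. Beyond these bookkeeping issues, the argument itself is structurally simpler than that of Theorem \ref{thm1} because the absence of isotropic directions in $\boldsymbol{\xi}^2$ removes the need for the delicate treatment of sums of the form \eqref{criticalcurveintro}.
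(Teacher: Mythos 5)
Your overall template (convolution reformulation, computation of the fiber counts, Cauchy--Schwarz plus mass transport, saturation by constants) is the right one, and your trichotomy $\{\mathbf 0\}\cup\Gamma^3\cup(\mathbb F_q^4\setminus\Gamma_0^3)$ is correct. But the central claim of your plan --- that for $q\equiv 3\,(\textup{mod}\,4)$ the anisotropy of $\boldsymbol{\xi}^2$ places the cone ``in the same easier regime as the $\mathbb P^2$ case for $q\equiv 3\,(\textup{mod}\,4)$,'' so that the critical contribution reduces to a diagonal term $\sum_{\omega}|f(\omega)|^4$ --- is false, and this is exactly where the real work lies. In the $\mathbb P^2$ case with $q\equiv 3\,(\textup{mod}\,4)$ the critical fibers are singletons (zero-radius spheres are points). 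For the cone, by contrast, the fiber over a point $\boldsymbol{\eta}=\alpha\boldsymbol{s}\in\Gamma^3$ consists of the $q-2$ pairs $(\beta\boldsymbol{s},(\alpha-\beta)\boldsymbol{s})$ with $\beta\in\mathbb F_q^\times\setminus\{\alpha\}$, as in \eqref{structureofpreimagescone}: it is a punctured line, not a point. So the critical portion contains genuinely off-diagonal bilinear sums of exactly the type \eqref{criticalcurveintro}, and no fiberwise Cauchy--Schwarz alone reaches the constant in \eqref{eq_sharpineq5}. What rescues the argument is a structural fact your plan does not use: $\Gamma^3$ is the disjoint union of $q^2+1$ punctured lines $\mathcal L_{\boldsymbol{s}}$ (see \eqref{slicingthecone}), and all preimages of a point of $\Gamma^3$ lie on the single line through it. After rearranging the critical sum line by line as in \eqref{conerearrangeincone}, one still needs a further Cauchy--Schwarz over the $q^2+1$ lines, namely \eqref{q^2+1cone}, whose factor $(q^2+1)^{-1}$ is one of the sources of the final constant. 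You also fold the origin into the same ``diagonal'' term, but its contribution is the non-diagonal antipodal sum $\bigl|\sum_{\boldsymbol{\eta}\in\Gamma^3}f(\boldsymbol{\eta})f(-\boldsymbol{\eta})\bigr|^2$, which must be treated separately as in \eqref{oppsoitepoints}.

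Two further inaccuracies. First, you assert that the counting function is ``strictly larger'' on the critical set; in fact on $\Gamma^3$ it is strictly smaller ($q-2$ versus $q(q-1)$; only the origin carries the larger value $|\Gamma^3|$), which reverses the sign of the mass-transport correction you would write down. Second, your constant-modulus argument is too quick: equality in the fiberwise Cauchy--Schwarz forces $f(\omega_1)\overline{f(\omega_2)}=C(\boldsymbol z)$ on each fiber, not $|f(\omega_1)|=|f(\omega_2)|$, and passing from this to constant modulus again requires the line structure, the equality cases of \eqref{q^2+1cone} and \eqref{oppsoitepoints}, and (for $q=3$) a separate combinatorial argument on the ten lines of the cone.
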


Theorem \ref{thm5} is the finite field analogue of Foschi's result \cite[Theorem 1.5]{Fo07} for the euclidean cone.
The two-fold convolution of the Lorentz invariant measure on the euclidean cone is constant in the interior of its support, but again this does not suffice to handle the case of finite fields. 
In fact, the convolution of the surface measure on the cone $\Gamma^3$ attains {\it three} different values: one  at the origin $\bf 0$, a different value on $\Gamma^3$, and yet a different one on the remaining generic set.
 It turns out that $\Gamma^3$ can be decomposed as a union of disjoint lines, $\{\mathcal{L}_s: s\in S\}$, and that these lines satisfy $\{\boldsymbol{\eta}_1\in \Gamma^3: \boldsymbol{\eta}-\boldsymbol{\eta}_1\in \Gamma^3\}\subset \mathcal{L}_s$, for each  $\boldsymbol{\eta}\in \mathcal{L}_s$.   This structural property plays a key role in the analysis.  \\

Mockenhaupt--Tao remark on \cite[p.~53]{MT04} that  the origin has been removed in \eqref{eq_coneF3} for technical convenience, but that it can be restored with no significant change to the results.
Surprisingly, this is {\it not} the case for the sharp results which we seek to prove. Indeed, if we define the cone
\begin{equation}\label{eq_defCone2}
    {\Upsilon}^3:=\{(\boldsymbol{\xi},\tau,\sigma)\in \mathbb F_q^{2\ast}\times\mathbb F_q^{\ast}\times\mathbb F_q^{\ast}: \tau^2+\sigma^2=\boldsymbol{\xi}^2\}\setminus\{{\bf 0}\},
\end{equation}
 we are able to show that constant functions fail to be  critical points on the full cones ${\Gamma}_0^3:=\Gamma^3\cup\{{\bf 0}\}$ and ${\Upsilon}_0^3:=\Upsilon^3\cup\{{\bf 0}\}.$ This is the content of our sixth and final result, which is  valid on fields of prime cardinality, and stands in sharp contrast with the euclidean case \cite{NOSST23}.

\begin{theorem}\label{thm6}
     Constants are not critical points for the $L^2(S,\textup d \nu)\to L^4(\mathbb F_p^4,\textup d \boldsymbol{x})$ extension inequality from $S\in\{{\Gamma}_0^3,{\Upsilon}_0^3\}$.
\end{theorem}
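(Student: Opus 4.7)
The plan is to derive the Euler--Lagrange condition for the constant function to be critical and then exhibit an explicit obstruction. Setting
\[
\Phi(f) := \frac{\|(f\nu)^\vee\|_{L^4(\mathbb{F}_p^4,\textup d \boldsymbol{x})}^4}{\|f\|_{L^2(S,\textup d \nu)}^4}
\]
for $S \in \{\Gamma_0^3, \Upsilon_0^3\}$, and expanding $\Phi(1+tg)$ to first order in $t$, an application of Plancherel's identity (combined with the symmetry $S = -S$ shared by both cones) shows that $\Phi'(0)$ vanishes for every $g: S \to \Co$ if and only if the three-fold additive convolution
\[
(\mathbf{1}_S \ast \mathbf{1}_S \ast \mathbf{1}_S)(\boldsymbol{\xi}) = \#\{(\boldsymbol{\eta}_1, \boldsymbol{\eta}_2, \boldsymbol{\eta}_3) \in S^3 : \boldsymbol{\eta}_1 + \boldsymbol{\eta}_2 + \boldsymbol{\eta}_3 = \boldsymbol{\xi}\}
\]
is constant in $\boldsymbol{\xi}$ as $\boldsymbol{\xi}$ ranges over $S$. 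We will refute this by comparing the values at $\boldsymbol{\xi} = \boldsymbol{0}$ and at an arbitrary non-origin cone point $\boldsymbol{\eta}_\ast \in S \setminus \{\boldsymbol{0}\}$.

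Writing $S = S' \cup \{\boldsymbol{0}\}$ with $S' \in \{\Gamma^3, \Upsilon^3\}$ and $\mathbf{1}_S = \mathbf{1}_{S'} + \delta_{\boldsymbol{0}}$, the binomial expansion yields
\[
\mathbf{1}_S^{\ast 3} = \mathbf{1}_{S'}^{\ast 3} + 3\, \mathbf{1}_{S'}^{\ast 2} + 3\, \mathbf{1}_{S'} + \delta_{\boldsymbol{0}}.
\]
Using the identity $\mathbf{1}_{S'}^{\ast 2}(\boldsymbol{0}) = |S'|$, which follows from $S' = -S'$, the difference $\mathbf{1}_S^{\ast 3}(\boldsymbol{0}) - \mathbf{1}_S^{\ast 3}(\boldsymbol{\eta}_\ast)$ reduces to
\[
\bigl[\mathbf{1}_{S'}^{\ast 3}(\boldsymbol{0}) - \mathbf{1}_{S'}^{\ast 3}(\boldsymbol{\eta}_\ast)\bigr] \;+\; 3\bigl[|S'| - \mathbf{1}_{S'}^{\ast 2}(\boldsymbol{\eta}_\ast)\bigr] \;-\; 2,
\]
and criticality amounts to the vanishing of this integer quantity for every $\boldsymbol{\eta}_\ast$. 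Our goal is to show it is strictly nonzero for every odd prime $p$. For $S' = \Gamma^3$, we plan to leverage the line decomposition $\{\mathcal{L}_s\}$ and the containment $\{\boldsymbol{\eta}_1 \in \Gamma^3 : \boldsymbol{\eta}_\ast - \boldsymbol{\eta}_1 \in \Gamma^3\} \subset \mathcal{L}_{s(\boldsymbol{\eta}_\ast)}$ recorded after Theorem \ref{thm5}: these deliver an explicit formula for $\mathbf{1}_{\Gamma^3}^{\ast 2}(\boldsymbol{\eta}_\ast)$ and reduce $\mathbf{1}_{\Gamma^3}^{\ast 3}(\boldsymbol{\eta}_\ast)$ to a sum indexed by pairs of cone lines. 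For $S' = \Upsilon^3$, an analogous but fresh computation is carried out via Gauss sums, with a case split according to whether $p \equiv 1$ or $3 \pmod 4$ reflecting whether $-1$ is a square in $\mathbb{F}_p$. Substituting the resulting polynomials in $p$ into the displayed expression produces a nonzero value in each case.

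The main obstacle lies in the exactness required: the leading-order behaviour in $p$ of $\mathbf{1}_S^{\ast 3}(\boldsymbol{0})$ and $\mathbf{1}_S^{\ast 3}(\boldsymbol{\eta}_\ast)$ agrees, and the failure of criticality is detected only at the lower-order correction coming from the singular role of $\boldsymbol{0}$ as the tip of the cone. Hence crude bounds are of no help: we must carry out an exact, arithmetic-sensitive count of additive representations. That adjoining a single point suffices to destroy criticality is an intrinsically discrete phenomenon, with no analogue in the euclidean setting \cite{NOSST23}.
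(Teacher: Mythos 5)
Your Euler--Lagrange reduction is sound: for the functional $\Phi(f)=\|(f\nu)^\vee\|_4^4/\|f\|_2^4$, criticality of $f\equiv 1$ is indeed equivalent (via Plancherel and $S=-S$) to the constancy of $\mathbf{1}_S^{\ast 3}$ on $S$, and your binomial expansion of $\mathbf{1}_S^{\ast 3}$ with $\mathbf{1}_S=\mathbf{1}_{S'}+\delta_{\boldsymbol 0}$, together with $\mathbf{1}_{S'}^{\ast 2}(\boldsymbol 0)=|S'|$, correctly isolates the difference $\mathbf{1}_S^{\ast 3}(\boldsymbol 0)-\mathbf{1}_S^{\ast 3}(\boldsymbol\eta_\ast)$ as the displayed integer. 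This is a legitimate combinatorial reformulation of what the paper does analytically: the paper tests the single direction $g=\delta_{\boldsymbol 0}$, computes $\Phi_p(\varepsilon)$ in closed form directly from the Fourier-side formulas for $\nu_\Upsilon^\vee$ and $\nu_\Gamma^\vee$ (thereby never needing any three-fold convolution), and exhibits $\Phi_p'(0)$ as an explicit nonzero rational function of $p$ — positive for $\Upsilon_0^3$ and negative for $\Gamma_0^3$ when $p\equiv 3\,(\mathrm{mod}\,4)$.

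The genuine gap is that your proposal stops exactly where the theorem begins. Every step you actually carry out is generic bookkeeping valid for \emph{any} symmetric set containing the origin; the content of Theorem \ref{thm6} is the nonvanishing of the specific integer
$\bigl[\mathbf{1}_{S'}^{\ast 3}(\boldsymbol 0)-\mathbf{1}_{S'}^{\ast 3}(\boldsymbol\eta_\ast)\bigr]+3\bigl[|S'|-\mathbf{1}_{S'}^{\ast 2}(\boldsymbol\eta_\ast)\bigr]-2$,
and you never compute it. The phrases ``we plan to leverage,'' ``an analogous but fresh computation is carried out,'' and ``produces a nonzero value in each case'' defer precisely the arithmetic that constitutes the proof; as you yourself note, the leading terms cancel and the answer lives in the lower-order corrections, so nonvanishing cannot be taken on faith — indeed the sign of the paper's $\Phi_p'(0)$ flips between the two cones, which shows the outcome is delicate. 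The missing computations are feasible with the paper's ingredients (Propositions \ref{prop_ConicConv} and \ref{propconiconvolutionfinitefields} give $\mathbf{1}_{S'}^{\ast 2}$ in all three regimes — constant at $\boldsymbol 0$, on $S'$, and off $S'\cup\{\boldsymbol 0\}$ — and $\mathbf{1}_{S'}^{\ast 3}(\boldsymbol\eta_\ast)=\sum_{\boldsymbol\xi\in\boldsymbol\eta_\ast-S'}\mathbf{1}_{S'}^{\ast 2}(\boldsymbol\xi)$ then reduces to counting how $\boldsymbol\eta_\ast-S'$ meets those three regimes, which is again the two-fold convolution), but until they are performed and the resulting polynomial in $p$ is checked to be nonzero for every odd prime, the theorem is not proved. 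A minor additional inaccuracy: you announce a case split modulo $4$ for $\Upsilon^3$, whereas \eqref{eq_convConeUps} is uniform in $p$; the case split is only needed for $\Gamma^3$.
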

In particular, constant functions are not local maximizers, and therefore not global maximizers, for ${\bf R}_{\Gamma_0^3}^\ast(2\to 4)$ or ${\bf R}_{\Upsilon_0^3}^\ast(2\to 4)$. We refer the reader to \cite{NOSST23, NOST23} for the state of the art on sharp conic restriction in the euclidean setting.\\

{\bf Overview.} The paper is organized as follows:
In \S\ref{sec_NP}, we  discuss the relevant finite field  preliminaries from  Fourier analysis  and  number theory.
In \S\ref{sec_Conv}, we compute the relevant convolution measures exactly, both via Fourier inversion and counting methods.
In the remaining six sections, \S\ref{sec_thm1}--\S\ref{sec_thm6}, we prove Theorems \ref{thm1}--\ref{thm6}, respectively.

\section{Notation and preliminaries}\label{sec_NP}
\subsection{Notation}
Given  a set $A$, we denote its indicator function by ${\bf 1}_A$.
We occasionally extend this notation to statements $S$, defining ${\bf 1}(S)=1$ if $S$ is true and ${\bf 1}(S)=0$ if $S$ is false. 
If $A$ is  a finite set, then  $|A|$ denotes its cardinality.
Real and imaginary parts of a given complex number $z\in\Co$ are  denoted by $\Re(z)$ and $\Im(z)$, and 
the principal value of the argument is $\textup{Arg}(z)\in(-\pi,\pi]$.
If $\mathcal{F}$ is a finite set of variables, then $C(\mathcal{F})$ denotes a quantity which only depends on elements of $\mathcal{F}$.  
\\

We reserve the letter $p$ to denote an odd prime number, and let $q=p^n$ for some $n\in\mathbb N:=\{1,2,\ldots\}$. As in the introduction, $\mathbb F_q$ denotes the finite field with $q$ elements.

\subsection{Fourier analysis on finite fields}\label{FAFF_sec} 
A useful reference for this section is \cite{Ca06}.
We are interested in the additive characters of $\mathbb F_q$. These can be listed via the {\it trace map}, $\textup {Tr}_n:\mathbb F_q\to \mathbb{F}_{p}$, given by
\begin{equation}\label{eq_defTrace}
    \textup{Tr}_n(x):=x+x^p+\ldots+ x^{p^{n-1}}.
\end{equation}
If $n=1$, then $\textup{Tr}_1$ is just the identity.
For each $a\in\mathbb F_q$, the map $e_a:\mathbb F_q\to\mathbb S^1, e(x):=\exp(2\pi i \textup{Tr}_n(ax)/p)$, is a character of $\mathbb F_q$; if $a\neq 0$, then we say that $e_a$ is a nonprincipal character, in which case $\{e_a(b\cdot): b\in \mathbb F_q\}$ is a listing of all the characters of $\mathbb F_q$. If $e$ is a nonprincipal character of $\mathbb F_q$, which we fix from now onwards, then
\begin{equation}\label{eq_SumCharct}
    \sum_{x\in\mathbb F_q} e(x)=0.
\end{equation}
Let $\mathbb F_q^d$ denote the vector field over $\mathbb F_q$ of dimension $d<\infty$. 
Then $\mathbb F_q^d$ is a finite abelian group, and we can describe its Fourier analysis in terms of the nonprincipal character $e$, since the characters of $\mathbb F_q^d$ are indexed by elements $\boldsymbol{\xi}\in\mathbb F_q^{d\ast}$ of the dual group, via
\[e_{\boldsymbol{\xi}}(\boldsymbol{x}):= e(\boldsymbol{x}\cdot\boldsymbol{\xi})=e\left(\sum_{i=1}^n x_i\xi_i\right)=\prod_{i=1}^n e_{\xi_i}(x_i).\]
From this and \eqref{eq_SumCharct}, it follows that
\begin{equation}\label{eq_SumCharctV}
    \sum_{\boldsymbol{x}\in\mathbb F_q^d} e(\boldsymbol{x}\cdot\boldsymbol{\xi}):=
\left\{ \begin{array}{ll}
q^d, & \text{ if } \boldsymbol{\xi}=\boldsymbol{0},\\
0, & \text{ if } \boldsymbol{\xi}\neq \boldsymbol{0}.
\end{array}\right.
\end{equation}
Choosing a specific nonprincipal character $e:\mathbb F_q\to \mathbb S^1$ amounts to fixing a group isomorphism between $\mathbb F_q^d$ and $\mathbb F_q^{d\ast}$, but the corresponding natural measures are different. We endow $\mathbb F_q^d$ with the counting measure $\textup d \boldsymbol{x}$, and $\mathbb F_q^{d\ast}$ with the normalized counting measure $\textup d \boldsymbol{\xi}$ so that $\mathbb F_q^{d\ast}$ has total mass 1:
\[\int_{\mathbb F_q^d} f(\boldsymbol{x})\,\textup d \boldsymbol{x} := \sum_{\boldsymbol{x}\in\mathbb F_q^d} f(\boldsymbol{x}), \text{ and } \int_{\mathbb F_q^{d\ast}} g(\boldsymbol{\xi})\,\textup d \boldsymbol{\xi} := \frac1{q^d} \sum_{\boldsymbol{\xi}\in\mathbb F_q^{d\ast}} g(\boldsymbol{\xi}).\]
Given a function $f:\mathbb F_q^d\to\Co$,  its Fourier transform $\widehat f:\mathbb F_q^{d\ast}\to\Co$ is defined via
\[\widehat f(\boldsymbol{\xi}):=\int_{\mathbb F_q^d} f(\boldsymbol{x})e(-\boldsymbol{x}\cdot\boldsymbol{\xi})\,\textup d \boldsymbol{x}=\sum_{\boldsymbol{x}\in\mathbb F_q^d} f(\boldsymbol{x}) e(-\boldsymbol{x}\cdot\boldsymbol{\xi}).\]
Fourier inversion states that
    \begin{equation}\label{eq_FI}
        f(\boldsymbol{x}) = \frac1{q^d} \sum_{\boldsymbol{\xi}\in \mathbb F_q^{d\ast}} \widehat f(\boldsymbol{\xi}) e(\boldsymbol{x}\cdot \boldsymbol{\xi})
        =\int_{\mathbb F_q^{d\ast}} \widehat f(\boldsymbol{\xi}) e(\boldsymbol{x}\cdot\boldsymbol{\xi})\,\textup d \boldsymbol{\xi}=: (\widehat f)^\vee(\boldsymbol{x}).
    \end{equation}
Convolution on $\mathbb F_q^d$ is defined in the usual way with respect to counting measure $\textup d \boldsymbol{x}$, whereas on $\mathbb F_q^{d\ast}$ convolution is defined with respect to normalized counting measure $\textup d \boldsymbol{\xi}$. The Fourier transform intertwines convolution and multiplication:
\begin{equation}\label{eq_intertwining}
    \widehat f\, \widehat g = \widehat{f \ast g}, \text{ and } \widehat{fg}=\widehat f\ast \widehat g.
\end{equation}
If $\sigma$ is a measure on $\mathbb F_q^{d\ast}$ defined via its action on a function $f:\mathbb F_q^{d\ast}\to\Co$ by
\[\langle f,\sigma\rangle=\int_{\mathbb F_q^{d\ast}} f(\boldsymbol{\xi})\,\textup d\sigma(\boldsymbol{\xi})
=\frac1{q^d}\sum_{\boldsymbol{\xi}\in\mathbb F_q^{d\ast}} f(\boldsymbol{\xi})w(\boldsymbol{\xi}),\]
then we identify $\sigma$ with the function $w.$
If $1\leq s<d$ and $\pi:\mathbb F_q^s\to \mathbb F_q^{d\ast}$ parametrizes an $s$-dimensional surface  $S\subset\mathbb F_q^{d\ast}$, then the normalized surface measure $\sigma_S$ (with total mass 1) is given by
\begin{equation}\label{eq_fSigmaS}
    \langle f,\sigma_S\rangle
=\frac1{q^s}\sum_{\boldsymbol{y}\in\mathbb F_q^s} f(\pi(\boldsymbol{y}))
=\frac1{q^d}\sum_{\boldsymbol{\xi}\in\mathbb F_q^{d\ast}} f(\boldsymbol{\xi}) q^{d-s} |\pi^{-1}(\boldsymbol{\xi})|.
\end{equation}
In particular, the measure $\sigma_S$ is associated with the function $w(\boldsymbol{\xi})=q^{d-s} |\pi^{-1}(\boldsymbol{\xi})|$. 
The inverse Fourier transform of $\sigma_S$ is given by
\[\sigma_S^\vee(\boldsymbol{x})=\langle e(\cdot\, \boldsymbol{x}),\sigma_S\rangle=\frac1{q^s}\sum_{\boldsymbol{y}\in\mathbb F_q^s} e(\boldsymbol{x}\cdot \pi(\boldsymbol{y})),\]
and, more generally, if $f$ is a complex-valued function defined on the image of $\pi$, then
\begin{equation}\label{eq_fSigmaSHat}
    (f\sigma_S)^\vee(\boldsymbol{x})=\frac1{q^s} \sum_{\boldsymbol{y}\in\mathbb F_q^s} f(\pi(\boldsymbol{y}))e(\boldsymbol{x}\cdot \pi(\boldsymbol{y})).
\end{equation}
In this paper, the surface $S$ will be either
a paraboloid $\mathbb P^d$,
cone $\Gamma^d$,
hyperbolic paraboloid $\mathbb H^2$,
or cone $\Upsilon^3$, respectively defined in
\eqref{eq_paraboloid},
\eqref{eq_coneF3},
\eqref{eq_defHP},
\eqref{eq_defCone2}. In this case, we abbreviate $\sigma=\sigma_S$, and note that \eqref{eq_fSigmaS} specializes to
\begin{equation}\label{eq_convmeas}
   \langle f, \sigma\rangle= \frac1{|S|} \sum_{\boldsymbol{\xi}\in S} f(\boldsymbol{\xi}),
\end{equation}
and that the {\it extension operator} \eqref{eq_fSigmaSHat} acting on a function $f:S\to\Co$ is given by
\begin{equation}\label{eq_FEOdef}
    (f\sigma)^\vee(\boldsymbol{x})=\frac1{|S|} \sum_{\boldsymbol{\xi}\in S} f(\boldsymbol{\xi}) e(\boldsymbol{x}\cdot\boldsymbol{\xi}).
\end{equation}

For the sake of notation, we will henceforth drop the star from $\mathbb F_q^{d\star}$ altogether.\\

Recall the definition of the best constant ${\bf R}_S^\ast$ given in \eqref{eq_FFExt}. 
It turns out that extension inequalities with even exponents admit combinatorial reformulations which are better suited towards sharp refinements.
The next result makes this effective. 
 Further connections between extension inequalities and counting problems are discussed in \cite[p.~49]{Gr13} and \cite[p.~43]{MT04}.

\begin{proposition}\label{prop_counting}
The extension inequality
       \begin{equation}\label{eq_equiv1}
           \|(f\sigma)^\vee\|_{L^{2k}(\mathbb F_q^d,\textup d \boldsymbol{x})} \leq {\bf R}^\ast_S(2\to 2k) \|f\|_{L^2(S,\textup d \sigma)}
       \end{equation} 
is equivalent to the combinatorial inequality
\begin{equation}\label{eq_equiv2}
    \sum_{\boldsymbol{\xi}\in\mathbb F_q^d} \left| \sum_{\substack{\boldsymbol{\xi}_1+\ldots+\boldsymbol{\xi}_k=\boldsymbol{\xi}\\
    \boldsymbol{\xi}_i\in S}} \prod_{i=1}^k f(\boldsymbol{\xi}_i)\right|^2 \leq {\bf C}_S^*(2\to 2k) \left(\sum_{\boldsymbol{\xi}\in S} |f(\boldsymbol{\xi})|^2\right)^k,
\end{equation}
in the sense that the set of maximizers of \eqref{eq_equiv1} coincides with that of \eqref{eq_equiv2}, and the corresponding best constants are related via
\[{\bf C}_S^*(2\to 2k) = q^{-d}|S|^k  {\bf R}^\ast_S(2\to 2k)^{2k}.\]
\end{proposition}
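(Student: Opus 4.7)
The plan is to raise the extension operator to the $k$-th power on the physical side, recognize the result as an inverse Fourier transform, and apply Plancherel's identity (equivalently, character orthogonality) to transfer the $L^{2k}$ norm to a sum of squares on the frequency side. This is essentially a routine Fourier-analytic calculation; the only care required is bookkeeping the normalizations, since $\mathbb F_q^d$ carries counting measure while $\mathbb F_q^{d\ast}$ carries normalized counting measure.

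Concretely, starting from \eqref{eq_FEOdef}, I would expand
\[\bigl((f\sigma)^\vee(\boldsymbol{x})\bigr)^{k} = \frac{1}{|S|^k}\sum_{\boldsymbol{\xi}_1,\ldots,\boldsymbol{\xi}_k\in S}\prod_{i=1}^k f(\boldsymbol{\xi}_i)\, e\bigl(\boldsymbol{x}\cdot(\boldsymbol{\xi}_1+\cdots+\boldsymbol{\xi}_k)\bigr)=\frac{1}{|S|^k}\sum_{\boldsymbol{\xi}\in\mathbb F_q^d} F_k(\boldsymbol{\xi})\, e(\boldsymbol{x}\cdot\boldsymbol{\xi}),\]
after grouping by the common value $\boldsymbol{\xi}=\boldsymbol{\xi}_1+\cdots+\boldsymbol{\xi}_k$, where $F_k(\boldsymbol{\xi})$ denotes the inner sum appearing in \eqref{eq_equiv2}. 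Then, writing $|(f\sigma)^\vee(\boldsymbol{x})|^{2k}=|((f\sigma)^\vee(\boldsymbol{x}))^k|^2$, expanding the square, and invoking the orthogonality relation \eqref{eq_SumCharctV} for characters, the $\boldsymbol{x}$-sum collapses to $q^d\,\mathbf{1}(\boldsymbol{\xi}=\boldsymbol{\eta})$, yielding
\[\|(f\sigma)^\vee\|_{L^{2k}(\mathbb F_q^d,\textup d\boldsymbol{x})}^{2k} = \frac{q^d}{|S|^{2k}}\sum_{\boldsymbol{\xi}\in\mathbb F_q^d} |F_k(\boldsymbol{\xi})|^2.\]

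Combined with the identity $\|f\|_{L^2(S,\textup d\sigma)}^2=|S|^{-1}\sum_{\boldsymbol{\xi}\in S}|f(\boldsymbol{\xi})|^2$ coming from \eqref{eq_convmeas}, raising \eqref{eq_equiv1} to the $2k$-th power and multiplying through by the positive factor $|S|^{2k}/q^d$ turns it, term by term, into \eqref{eq_equiv2}, with ${\bf C}_S^\ast(2\to 2k)=q^{-d}|S|^k({\bf R}_S^\ast(2\to 2k))^{2k}$. Since the two inequalities differ only by a strictly positive scalar depending solely on $(S,q,d,k)$ and not on $f$, equality in one holds for a given nonzero $f$ if and only if it holds in the other, so the sets of maximizers coincide and the sharp constants satisfy the displayed relation. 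There is no genuine obstacle here—the proposition is really a rewriting—but this combinatorial reformulation will be indispensable for the sharp analyses of Theorems \ref{thm1}--\ref{thm5} carried out in the subsequent sections, where one must estimate $\sum_{\boldsymbol{\xi}}|F_k(\boldsymbol{\xi})|^2$ by a careful decomposition of $\mathbb F_q^d$ into critical and generic sets.
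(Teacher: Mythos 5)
Your proposal is correct and follows essentially the same route as the paper: raise the extension operator to the $2k$-th power, invoke the character orthogonality relation \eqref{eq_SumCharctV} to collapse the $\boldsymbol{x}$-sum, and identify the resulting quantity with $\sum_{\boldsymbol{\xi}}|F_k(\boldsymbol{\xi})|^2$; your grouping of $((f\sigma)^\vee)^k$ as a Fourier series followed by Plancherel is just a mild reorganization of the paper's direct expansion. The normalization bookkeeping and the resulting relation between ${\bf C}_S^*$ and ${\bf R}_S^*$ match the paper exactly.
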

\begin{proof}
Start by noting that
\[ \|f\|_{L^2(S,\textup d \sigma)}^{2k}=|S|^{-k}\left(\sum_{\boldsymbol{\xi}\in S}|f(\boldsymbol{\xi})|^2\right)^k.\]
    Raising the left-hand side of \eqref{eq_equiv1} to the power $2k$, using \eqref{eq_FEOdef}, and reversing the order of summation,
    \begin{align*}
        \sum_{\boldsymbol{x}\in\mathbb F_q^d} |(f\sigma)^\vee(\boldsymbol{x})|^{2k}
        =\frac1{|S|^{2k}}\sum_{\boldsymbol{x}\in\mathbb F_q^d}\left|\sum_{\boldsymbol{\xi}\in S} f(\boldsymbol{\xi})e(\boldsymbol{x}\cdot \boldsymbol{\xi})\right|^{2k}
        = \frac{q^d}{|S|^{2k}} {\sum^\ast} \prod_{i=1}^k f(\boldsymbol{\xi}_i)\overline{f(\boldsymbol{\eta}_i)},
    \end{align*}
    where the  sum $\overset{\ast}{\sum}$ runs over $k$-tuples $(\boldsymbol{\xi}_i)_{i=1}^k,(\boldsymbol{\eta}_i)_{i=1}^k\in S^k$ such that $\sum_{i=1}^k \boldsymbol{\xi}_i=\sum_{i=1}^k \boldsymbol{\eta}_i$. In fact, the orthogonality relation \eqref{eq_SumCharctV} implies that
\begin{equation}\label{eq_notalwayszero}
    \sum_{\boldsymbol{x}\in\mathbb F_q^d} e\left(\boldsymbol{x}\cdot \left(\sum_{i=1}^k\boldsymbol{\xi}_i-\sum_{i=1}^k\boldsymbol{\eta}_i\right)\right)=0
\end{equation}
    unless $\sum_{i=1}^k \boldsymbol{\xi}_i=\sum_{i=1}^k \boldsymbol{\eta}_i$, in which case the left-hand side of \eqref{eq_notalwayszero} equals $q^d$. The final observation is that 
\[ \sum^\ast \prod_{i=1}^k f(\boldsymbol{\xi}_i)\overline{f(\boldsymbol{\eta}_i)}=\sum_{\boldsymbol{\xi}\in\mathbb F_q^d}\left|\sum_{\substack{\boldsymbol{\xi}_1+\ldots+\boldsymbol{\xi}_k=\boldsymbol{\xi}\\
    \boldsymbol{\xi}_i\in S}} \prod_{i=1}^k f(\boldsymbol{\xi}_i)\right|^2. \qedhere\]
\end{proof}

\subsection{Number theory on finite fields} 
A useful reference for this section is \cite[Chapters 5 and 6]{LN97}.
Recall that $p$ denotes an odd prime number, and $q=p^n$ for some $n\in\N$.
Let $\mathbb F_q^\times:=\mathbb F_q\setminus\{0\}$.
For the reader's convenience, we include elementary proofs of some well-known results, including the   size of certain hyperbolas and ellipses in $\mathbb F_q^2$ via the following lemma; see also \cite[Lemma 6.24]{LN97}.

\begin{lemma}\label{lem_pointsofellipse}
Let $r\in \mathbb F_q^\times$.
If $c\neq 0$ is a square in $\mathbb{F}_q$, then 
\begin{equation}\label{eq_ellipse}
\left|\{(x,y)\in  \mathbb{F}_q^2:\, x^2-cy^2=r\}\right|=q-1.    
\end{equation}
 If $c$ is not a square in $\mathbb{F}_q$, then 
\begin{equation}\label{eq_hyperbola}
\left |\{(x,y)\in  \mathbb{F}_q^2:\, x^2-cy^2=r\}\right|=q+1.
\end{equation}
\end{lemma}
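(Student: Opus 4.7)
The plan is to split into the two cases according to whether $c$ is a square, since the quadratic form $x^2 - cy^2$ behaves very differently in each case, and to leverage linear algebra (in the square case) and the norm form on a quadratic extension (in the non-square case).

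For the square case, suppose $c = \alpha^2$ with $\alpha \in \mathbb{F}_q^\times$. Then $x^2 - cy^2 = (x-\alpha y)(x+\alpha y)$ factors over $\mathbb{F}_q$. The $\mathbb{F}_q$-linear change of variables $(u,v) = (x-\alpha y, x+\alpha y)$ has determinant $2\alpha \neq 0$ (here we use that $p$ is odd and $\alpha \neq 0$), hence is a bijection of $\mathbb{F}_q^2$. Counting $(x,y)$ with $x^2 - cy^2 = r$ therefore reduces to counting $(u,v) \in \mathbb{F}_q^2$ with $uv = r$. Since $r \neq 0$, we must have $u \in \mathbb{F}_q^\times$ and $v = r/u$, yielding exactly $q-1$ solutions, which is \eqref{eq_ellipse}.

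For the non-square case, I would identify $\mathbb{F}_q^2$ with the quadratic extension $\mathbb{F}_{q^2} = \mathbb{F}_q(\sqrt{c})$ via the $\mathbb{F}_q$-linear isomorphism $(x,y) \mapsto x + y\sqrt{c}$. Under this identification, $x^2 - cy^2$ is precisely the norm $N_{\mathbb{F}_{q^2}/\mathbb{F}_q}(x+y\sqrt{c})$, since the nontrivial Galois automorphism sends $\sqrt{c} \mapsto -\sqrt{c}$. The norm map $N:\mathbb{F}_{q^2}^\times \to \mathbb{F}_q^\times$ is a group homomorphism between cyclic groups of orders $q^2-1$ and $q-1$; since any generator $g$ of $\mathbb{F}_{q^2}^\times$ satisfies $N(g) = g^{1+q} = g^{(q^2-1)/(q-1)}$, which has order $q-1$ in $\mathbb{F}_q^\times$, the map is surjective. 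The kernel therefore has cardinality $(q^2-1)/(q-1) = q+1$, so each fiber over an element $r \in \mathbb{F}_q^\times$ has exactly $q+1$ preimages, giving \eqref{eq_hyperbola}.

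There is no substantial obstacle here; the main point is simply to recognize that $x^2 - cy^2$ is, up to a linear bijection of the underlying space, either the product form $uv$ on $\mathbb{F}_q \times \mathbb{F}_q$ or the norm form on the quadratic extension $\mathbb{F}_{q^2}/\mathbb{F}_q$. The quadratic-reciprocity-type fact that $-1$ being a square in $\mathbb{F}_q$ corresponds to $q \equiv 1 \pmod 4$ is not needed here, as the statement is stated for arbitrary $c \in \mathbb{F}_q^\times$. One mild care-point worth verifying explicitly is that the change of variables in the first case is indeed a bijection, which uses the oddness of the characteristic through the invertibility of $2$.
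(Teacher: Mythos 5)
Your proof is correct and follows essentially the same route as the paper: the square case is the identical linear change of variables reducing to $uv=r$, and the non-square case is the same identification of $x^2-cy^2$ with the norm form on $\mathbb F_{q^2}/\mathbb F_q$, with the count $q+1$ obtained from the cyclic structure of $\mathbb F_{q^2}^\times$ (you phrase it as the kernel/fiber of the surjective norm homomorphism, the paper as an explicit congruence count, but these are the same computation).
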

\begin{remark}\label{rem_degenerate}
    In the degenerate case $r=0$, one easily checks that 
\begin{equation}\label{eq_lines}
    |\{(x,y)\in\mathbb F_q^2:\, x^2= cy^2\}|:=
\left\{ \begin{array}{ll}
2q-1, & \text{ if } c\neq 0 \text{ is a square in }\mathbb{F}_q,\\
1, & \text{ if } c\neq 0 \text{ is not a square in }\mathbb{F}_q,\\
q, & \text{ if } c=0.  \end{array}\right.
\end{equation}
\end{remark}
\begin{proof}[Proof of Lemma \ref{lem_pointsofellipse}]
If $c\neq 0$ is a square in $\mathbb F_q$, then  let  $w\in \mathbb F_q^\times$ be such that $w^2=c$.
The change of variables $(x,y)\mapsto (u,v):=(x-wy,x+wy)$ then yields 
\begin{align*}
  \left|\{(x,y)\in  \mathbb{F}_q^2:\, x^2-cy^2=r\}\right|&=\left|\{(u,v)\in  \mathbb{F}_q^2:\, uv=r\}\right|
  =\left|\{(u,ru^{-1}):\, u\in \mathbb{F}_q^\times\}\right|
  =q-1,
\end{align*}
since $r\neq 0$.
Now suppose that $c$ is not a square in $\mathbb F_q$. 
Consider the quadratic field extension $\mathbb{F}_q(w)/\mathbb{F}_q$, where $w$ is an element of the algebraic closure of $\mathbb F_q$ such that $w^2=c$. 
Since $\mathbb{F}_q(w)/\mathbb{F}_q$ is a Galois extension of degree two, it has just one non-trivial automorphism.
Therefore the 
automorphism $(x+wy)\mapsto (x+wy)^q$  coincides with the map $x+wy\mapsto x-wy$. Hence
\begin{align*}
\left|\{(x,y)\in  \mathbb{F}_q^2:\, x^2-cy^2=r\}\right|
&=\left|\{(x,y)\in  \mathbb{F}_q^2:\, (x+wy)(x-wy)=r\}\right|\\
&=\left|\{(x,y)\in  \mathbb{F}_q^2:\, (x+wy)^{q+1}=r\}\right|\\
&=\left|\{a\in  \mathbb{F}_q(w):\, a^{q+1}=r\}\right|.
\end{align*}
Since the fields $\mathbb{F}_q(w)$ and $\mathbb F_{q^2}$ are isomorphic, the group $\mathbb{F}_q(w)^\times$ is cyclic; let  $g$ be a generator, and write $g^{\alpha}=r\neq 0$.
Since $r^{q}=r$, it follows  that $g^{\alpha(q-1)}=1$ and therefore $q^2-1$ divides $\alpha(q-1)$. This implies $\alpha=\alpha_0(q+1)$, for some $\alpha_0\in \mathbb{Z}_{q^2-1}$, and so 
\begin{align*}
\left|\{a\in  \mathbb{F}_q(w):\,a^{q+1}=r\}\right|
&=\left|\{\beta\in \mathbb{Z}_{q^2-1}:\,\beta(q+1)\equiv\alpha (\textup{mod} (q^2-1))\}\right|\\
&=\left|\{\beta\in \mathbb{Z}_{q^2-1}:\,\beta\equiv\alpha_0(\textup{mod}(q-1))\}\right|=q+1.
\end{align*}
This concludes the proof of the lemma.
\end{proof}

The Legendre symbol,
\[\left(\frac ap\right):=
\left\{ \begin{array}{ll}
1, & \text{ if } a\neq 0 \text{ is a square in } \mathbb F_p,\\
-1, & \text{ if } a \text{ is not a square in } \mathbb F_p,\\
0, & \text{ if }  a=0,  \end{array}\right.\]
 is a completely multiplicative function of its top argument, i.e., for every $a,b\in\mathbb F_p$, 
\begin{equation}\label{eq_LegMult}
    \left(\frac {ab}p\right)=\left(\frac ap\right)\left(\frac bp\right).
\end{equation}
The Jacobi symbol, $\left(\frac a m\right)$, is a generalization of the Legendre symbol that allows for a composite bottom argument $m$, which is assumed to be odd and positive.

Legendre symbols can be used to evaluate quadratic Gauss sums. 
    If $a\neq 0$, then
    \begin{equation}\label{eq_Sa1}
        S(a):=\sum_{x\in\mathbb F_p} e(ax^2)=\left(\frac ap\right) S(1),
    \end{equation}
where
\begin{equation}\label{eq_S1}
    S(1)=\varepsilon_p\sqrt p,
\text{ and } 
    \varepsilon_p:=\left\{ \begin{array}{ll}
1, & \text{ if } p\equiv 1(\textup{mod } 4),\\
i, & \text{ if } p\equiv 3(\textup{mod } 4);
\end{array} \right.
\end{equation}
 see \cite[Theorems 5.12(i) and 5.15]{LN97}.

\begin{lemma}\label{lem_weightedGS}
Let $\varepsilon_p$ be as in \eqref{eq_S1}.
If $a\in\mathbb F_p$,  then
\begin{equation}\label{eq_weightedGS}
    \sum_{x\in\mathbb F_p^\times} \left(\frac xp\right) e(ax)=\left(\frac ap\right)\varepsilon_p\sqrt{p}.
\end{equation}
\end{lemma}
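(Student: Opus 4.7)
The plan is to reduce to the case $a=1$ via multiplicativity of the Legendre symbol, and then relate the resulting sum to the quadratic Gauss sum $S(1)$ defined in \eqref{eq_Sa1}, whose value is already known from \eqref{eq_S1}.

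First I would dispose of the trivial case $a=0$: the right-hand side vanishes because $(\tfrac{0}{p})=0$, and the left-hand side vanishes because $\sum_{x\in\mathbb F_p^\times}(\tfrac{x}{p})=0$ (there are equally many quadratic residues and non-residues modulo $p$).

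For $a\neq 0$, I would perform the substitution $y=ax$, which is a bijection of $\mathbb F_p^\times$ onto itself. Using the complete multiplicativity \eqref{eq_LegMult} of the Legendre symbol together with $(\tfrac{a^{-1}}{p})=(\tfrac{a}{p})$ (since the Legendre symbol takes values in $\{\pm 1\}$ on $\mathbb F_p^\times$), the sum becomes
\begin{equation*}
\sum_{x\in\mathbb F_p^\times}\left(\frac{x}{p}\right)e(ax)=\left(\frac{a}{p}\right)\sum_{y\in\mathbb F_p^\times}\left(\frac{y}{p}\right)e(y).
\end{equation*}
Thus it suffices to prove the identity for $a=1$.

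The key step is the following counting argument relating the inner sum to $S(1)$. For $y\in\mathbb F_p^\times$, the number of $x\in\mathbb F_p$ with $x^2=y$ equals $1+(\tfrac{y}{p})$ (it is $2$ for nonzero squares and $0$ for non-squares), while for $y=0$ there is exactly one solution, namely $x=0$. Reorganizing the Gauss sum by fibers yields
\begin{equation*}
S(1)=\sum_{x\in\mathbb F_p}e(x^2)=1+\sum_{y\in\mathbb F_p^\times}\left(1+\left(\frac{y}{p}\right)\right)e(y)=1+\sum_{y\in\mathbb F_p^\times}e(y)+\sum_{y\in\mathbb F_p^\times}\left(\frac{y}{p}\right)e(y).
\end{equation*}
Since $1+\sum_{y\in\mathbb F_p^\times}e(y)=\sum_{y\in\mathbb F_p}e(y)=0$ by \eqref{eq_SumCharct}, the first two terms cancel, and we conclude that $\sum_{y\in\mathbb F_p^\times}(\tfrac{y}{p})e(y)=S(1)=\varepsilon_p\sqrt{p}$ by \eqref{eq_S1}. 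Combining this with the reduction step completes the proof.

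There is no real obstacle in this argument; the only subtlety is remembering to account correctly for the $y=0$ fiber when rewriting $S(1)$, and to use the cancellation $\sum_{y\in\mathbb F_p}e(y)=0$ to absorb the stray constant term.
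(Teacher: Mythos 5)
Your proposal is correct and follows essentially the same route as the paper: reduce to $a=1$ via multiplicativity and the substitution $y=ax$, then identify $\sum_{y}(\tfrac{y}{p})e(y)$ with $S(1)$ by counting solutions of $z^2=y$ and using $\sum_{y\in\mathbb F_p}e(y)=0$. The only cosmetic difference is that you treat the case $a=0$ explicitly, which the paper dismisses in one line.
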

\begin{proof}
No generality is lost in  assuming $a\neq 0$.
Since $(\frac 0p)=0$, the sum on the left-hand side of \eqref{eq_weightedGS} can run over the whole $\mathbb F_p$ instead of $\mathbb F_p^\times$.
    Since $(\frac ap)^2=1$, identity \eqref{eq_LegMult} and the change of variables $ax=y$ together yield
\begin{equation}\label{eq_intermediate}
    \sum_{x\in\mathbb F_p} \left(\frac xp\right) e(ax)=\left(\frac ap\right)\sum_{x\in\mathbb F_p} \left(\frac {ax}p\right) e(ax)=\left(\frac ap\right)\sum_{y\in\mathbb F_p} \left(\frac {y}p\right) e(y).
\end{equation}
By  \eqref{eq_SumCharct}, $\sum_{y\in\mathbb F_p}  e(y)=0$. On the other hand, $1+(\frac yp)$ equals the number of solutions to the equation $z^2= y$ in $\mathbb F_p$.
It follows that the right-hand side of \eqref{eq_intermediate} equals
\[\left(\frac ap\right)\sum_{z\in\mathbb F_p} e(z^2)=\left(\frac ap\right)S(1),\]
which via
 \eqref{eq_S1} leads to the  desired result.
\end{proof}

\begin{lemma}\label{lem_GenGaussSum}
   Let $\varepsilon_p$ be as in \eqref{eq_S1}.
   If $a\in\mathbb F_p^\times$ and $b\in\mathbb F_p$, then
    \[\sum_{x\in\mathbb F_p} e(a x^2+b x)=e(-\tfrac{b^2}{4a}) \left( \dfrac{a}{p}\right) \varepsilon_p\sqrt{p}.\]
\end{lemma}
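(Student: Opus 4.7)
The plan is to reduce the claim to the pure quadratic Gauss sum evaluation recorded in \eqref{eq_Sa1}--\eqref{eq_S1} by completing the square. Since $p$ is an odd prime and $a\in\mathbb{F}_p^\times$, both $2a$ and $4a$ are invertible in $\mathbb{F}_p$, so the algebraic identity
\[
ax^2 + bx = a\Bigl(x + \tfrac{b}{2a}\Bigr)^2 - \tfrac{b^2}{4a}
\]
makes sense and holds inside $\mathbb{F}_p$, with $b/(2a)$ and $b^2/(4a)$ interpreted via the corresponding multiplicative inverses.

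The first step is to substitute this identity into the exponent and pull the $x$-independent factor $e(-b^2/(4a))$ out of the sum, using that $e$ is a character of the additive group $\mathbb{F}_p$. The second step is the translation $y = x + b/(2a)$, which is a bijection of $\mathbb{F}_p$ onto itself; under this reindexing one obtains
\[
\sum_{x\in\mathbb{F}_p} e(ax^2 + bx) \;=\; e\Bigl(-\tfrac{b^2}{4a}\Bigr)\sum_{y\in\mathbb{F}_p} e(ay^2) \;=\; e\Bigl(-\tfrac{b^2}{4a}\Bigr)\, S(a),
\]
where $S(a)$ is the Gauss sum introduced in \eqref{eq_Sa1}. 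The third step is to invoke \eqref{eq_Sa1} to rewrite $S(a) = \bigl(\tfrac{a}{p}\bigr) S(1)$, and then \eqref{eq_S1} to evaluate $S(1) = \varepsilon_p \sqrt{p}$. Assembling these factors delivers the stated identity.

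No substantive obstacle is expected: the argument is a classical completing-the-square reduction, entirely mechanical once the two inputs \eqref{eq_Sa1} and \eqref{eq_S1} are in place. The only hypotheses being used are $p\ne 2$ (so that $4a$ is invertible and the square-completion is meaningful), already built into the standing convention that $p$ is odd, and $a\ne 0$, which is the assumption of the lemma; the case $b=0$ is included since then $e(-b^2/(4a)) = e(0) = 1$ and the formula reduces to \eqref{eq_Sa1}.
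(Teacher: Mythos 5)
Your proposal is correct and matches the paper's proof exactly: both complete the square, shift the summation variable, and then invoke \eqref{eq_Sa1} and \eqref{eq_S1} to evaluate the resulting quadratic Gauss sum. No further comment is needed.
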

\begin{proof}
    Complete the square and change variables. In more detail:
    \begin{equation*}
        \sum_{x\in\mathbb F_p} e(a x^2+b x)
    = e(-\tfrac{b^2}{4a}) \sum_{x\in\mathbb F_p} e(a(x+\tfrac b{2a})^2)
    = e(-\tfrac{b^2}{4a}) \sum_{y\in\mathbb F_p} e(ay^2)
    = e(-\tfrac{b^2}{4a}) S(a).
    \end{equation*}
  The desired result follows from \eqref{eq_Sa1} and \eqref{eq_S1}.
\end{proof}

We recall the law of quadratic reciprocity,
\begin{equation}\label{eq_LQR}
    \left(\frac pr\right)\left(\frac rp\right)=(-1)^{\frac{(p-1)(r-1)}4},
\end{equation}
which  holds for Jacobi symbols of arbitrary odd positive coprime integers $p,r$. 
This follows from the usual version (e.g., \cite[Theorem 5.17]{LN97}) by induction.
We will  need \eqref{eq_LQR} only for odd primes $p$.
 The first supplement then reads
\begin{equation}\label{eq_1stSupp}
    \left(\frac {-1}p\right) = (-1)^{\frac{p-1}2},
\end{equation}
and the second supplement then reads
\begin{equation}\label{eq_2ndSupp}
    \left(\frac 2p\right) = (-1)^{\frac{p^2-1}8}.
\end{equation}
We conclude this section by presenting the following well-known characterization of when $-1$ and $-3$ are squares over $\mathbb F_q$, with $q=p^n$ and $p$ an odd prime.
\begin{lemma}\label{lem_squaresmodQ}
    $-1$ is a square in $\mathbb F_q$ if and only if $q\equiv 1(\textup{mod } 4)$.
If $p>3$, then
    $-3$ is a square in $\mathbb F_q$ if and only if $q\equiv 1(\textup{mod } 3)$.
\end{lemma}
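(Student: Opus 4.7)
Both assertions follow from the cyclic structure of $\mathbb{F}_q^{\times}$, which has order $q-1$ (note that $q$ is odd since $p$ is). I will combine Euler's criterion (a nonzero $a\in\mathbb{F}_q$ is a square iff $a^{(q-1)/2}=1$) with the standard fact that a finite cyclic group of order $N$ admits an element of order $m$ iff $m\mid N$.

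For the first claim, observe that $-1$ is the unique element of order $2$ in $\mathbb{F}_q^{\times}$. Hence $-1$ is a square precisely when some $x\in\mathbb{F}_q^\times$ satisfies $x^2=-1$, equivalently when $x$ has order $4$. By the cyclicity of $\mathbb{F}_q^\times$, such an $x$ exists iff $4\mid q-1$, that is, iff $q\equiv 1\,(\textup{mod}\,4)$.

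For the second claim, the key reduction is that $\sqrt{-3}$ may be expressed in terms of a primitive cube root of unity. Assume $p>3$, so that $2,3\in\mathbb{F}_q^\times$ and $\mathrm{char}(\mathbb{F}_q)\neq 3$. Let $\zeta$ denote a primitive cube root of unity in the algebraic closure $\overline{\mathbb{F}_q}$. Using $\zeta^2+\zeta+1=0$, a direct computation yields
\[
(2\zeta+1)^2 \;=\; 4\zeta^2+4\zeta+1 \;=\; 4(\zeta^2+\zeta)+1 \;=\; -4+1 \;=\; -3.
\]
Since $2\in\mathbb{F}_q^\times$, the element $2\zeta+1\in\overline{\mathbb{F}_q}$ lies in $\mathbb{F}_q$ iff $\zeta$ does. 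Invoking once more that $\mathbb{F}_q^\times$ is cyclic of order $q-1$, it contains a primitive cube root of unity iff $3\mid q-1$, i.e., iff $q\equiv 1\,(\textup{mod}\,3)$. This concludes the second claim.

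\textbf{Main obstacle.} There is essentially none: both parts are classical consequences of the cyclic structure of $\mathbb{F}_q^\times$. The only point requiring a flash of insight is the identity $(2\zeta+1)^2=-3$, which becomes transparent once one recognizes $\sqrt{-3}$ as the discriminant of the cyclotomic polynomial $x^2+x+1$. The hypothesis $p>3$ is used exclusively to guarantee $2,3\in\mathbb{F}_q^\times$ and that $x^3-1$ is separable over $\mathbb{F}_q$, so that primitive cube roots of unity are well defined.
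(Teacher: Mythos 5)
Your proof of the first statement is essentially identical to the paper's: cyclicity of $\mathbb F_q^\times$, uniqueness of the element of order $2$, existence of an element of order $4$ iff $4\mid q-1$. For the second statement you take a genuinely different route. The paper first settles the prime case $q=p$ with the Legendre symbol, the first supplement and quadratic reciprocity, arriving at $\left(\frac{-3}{p}\right)=\left(\frac{p}{3}\right)$, and then handles $q=p^n$ by a field-extension argument: when $p\equiv 2(\textup{mod }3)$ the polynomial $x^2+3$ is irreducible over $\mathbb F_p$, so $-3$ becomes a square exactly in $\mathbb F_{p^2}$, hence in $\mathbb F_{p^n}$ iff $2\mid n$; finally ``$p\equiv 1(\textup{mod }3)$ or $2\mid n$'' is unwound to $q\equiv 1(\textup{mod }3)$. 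You instead use the identity $(2\zeta+1)^2=-3$ for a primitive cube root of unity $\zeta$, reducing everything in one uniform step to whether $\mathbb F_q^\times$ (cyclic of order $q-1$) contains an element of order $3$; this avoids reciprocity and the case analysis on $n$ entirely, and is arguably cleaner, while the paper's route exercises the reciprocity machinery it has already set up in that section. One small point you should make explicit is the ``only if'' direction of your equivalence ``$-3$ is a square in $\mathbb F_q$ iff $2\zeta+1\in\mathbb F_q$'': since the two square roots of $-3$ in $\overline{\mathbb F_q}$ are precisely $\pm(2\zeta+1)$, any $w\in\mathbb F_q$ with $w^2=-3$ forces $2\zeta+1=\pm w\in\mathbb F_q$. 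With that sentence added, your argument is complete and correct.
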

\begin{proof}
    The first statement follows from two elementary observations:  the group $\mathbb F_q^\times$ is cyclic of order $q-1=:m$, and $-1$ is the unique element of order 2 in $\mathbb F_q^\times$ (this uses $p>2$). It follows that there exists $x\in\mathbb F_q^\times$ for which $x^2=-1$ if and only if $4|m$, or equivalently $q\equiv 1(\textup{mod }4)$.
    The second statement is also straightforward to verify. The case $n=1$ follows from complete multiplicativity \eqref{eq_LegMult} of the Legendre symbol, the first supplement \eqref{eq_1stSupp} and quadratic reciprocity \eqref{eq_LQR}:
    \[\left(\frac {-3}p\right)=\left(\frac {-1}p\right)\left(\frac {3}p\right)=(-1)^{\frac{p-1}2}(-1)^{\frac{(p-1)(3-1)}4}\left(\frac p3\right)=\left(\frac p3\right),\]
    which equals 1 if and only if $p\equiv 1(\textup{mod }3)$.
    If $q=p^n$ for some $n>1$, we consider two cases: if $p\equiv 1(\textup{mod }3)$, then $-3$ is  a square already in $\mathbb F_p$. If $p\equiv 2(\textup{mod }3)$, then the polynomial $x^2+3$ is irreducible over $\mathbb F_p$, and so $K:=\mathbb F_p[x]/\langle x^2+3\rangle$ is a field which contains both zeros of $x^2+3$. Since $[K:\mathbb F_p]=2$, we have that $-3$ is a square in $\mathbb F_{p^2}$. But $\mathbb F_{p^2}\subset
\mathbb F_{p^n}$ if and only if $2|n$. To conclude, we observe that $p\equiv 1(\textup{mod }3)$ or $2|n$ is equivalent to $q\equiv 1(\textup{mod }3)$.
\end{proof}

\section{Convolutions}\label{sec_Conv}

The paraboloid $\mathbb P^d$ and the cone $\Gamma^d$ were defined in \eqref{eq_paraboloid} and \eqref{eq_coneF3}, respectively.
In this section, we compute the $k$-fold convolution of normalized surface measure $\sigma=\sigma_S$ on $S\in\{\mathbb P^d, \Gamma^d\}$ for different values of $(d,k,n)$, where $n$ is such that $q=p^n$.
We take two complementary approaches.

In \S\ref{sec_FIparab}, we use Fourier analysis to handle the case of $\mathbb P^d$ for general $d,k\geq 2$, but only when $n=1$. 
In \S\ref{sec_CountParab}, we use  elementary counting methods to handle the case of $\mathbb P^d$ for general $n\geq 1$, but only when directly relevant to Theorems \ref{thm1}--\ref{thm3}, i.e., when $(d,k)\in\{(2,2),(1,3)\}$. 
In \S\ref{sec_ConesFI}, we use Fourier analysis to compute the two-fold convolution on the full cone $\Gamma_0^3:=\Gamma^3\cup\{{\bf 0}\}$, but only when $n=1$, and note that this is related to the case of the full cone $\Upsilon_0
^3:=\Upsilon^3\cup\{{\bf 0}\}$ defined in \eqref{eq_defCone2} when $p\equiv 1(\textup{mod } 4)$.
The case of $\Gamma_0^3$ when $q\equiv 3(\textup{mod }4)$ and $n\geq 1$ is arbitrary, which is directly relevant to Theorem \ref{thm5}, is then handled using counting methods in \S\ref{sec_ConesCount}.

\subsection{Paraboloids in vector spaces over $\mathbb F_p$ via Fourier analysis}\label{sec_FIparab}

If $k$ is even, then we write $k=2^{\nu_2(k)}\ell$, with   $\nu_2(k)=\max\{\ell\in\mathbb N: 2^\ell | k\}$ and $\ell\geq 1$ odd.
Let $(\frac p1):=1$.

\begin{proposition}\label{prop_parabConv}
    Given  $d,k\geq 2$ and an odd prime  $p>k$, let
     $\sigma=\sigma_{\mathbb P^d}$ denote the normalized surface measure on the paraboloid $\mathbb P^{d}\subset\mathbb F_p^{d+1}$. Then the $k$-fold convolution measure of $\sigma$ is given  by
    \begin{equation}
        \sigma^{\ast k}(\boldsymbol{\xi},\tau)=1+\varepsilon_p^{d(k+1)} p^{\frac{d(1-k)}2}\varphi(\boldsymbol{\xi},\tau),\,\,\,\,\,\,(\boldsymbol{\xi},\tau)\in\mathbb F_p^{d+1},
    \end{equation}
    where $\varepsilon_p$ was defined in \eqref{eq_S1} and the function $\varphi=\varphi_{d,k,p}$ is given by
\begin{equation}\label{eq_varphi}
    \varphi(\boldsymbol{\xi},\tau):=\left\{ \begin{array}{ll}
p{\bf 1}_{\{\tau=\boldsymbol{\xi}^2/k\}}-1, & \text{ if } d \text{ is even},\\
(-1)^{\frac{(p-1)(k+1)}4}\left(\frac pk\right)(p{\bf 1}_{\{\tau=\boldsymbol{\xi}^2/k\}}-1), & \text{ if } d, k \text{ are odd},\\
\varepsilon_p \sqrt p (-1)^{\frac{(p-1)(\ell+1)}4+\frac{p^2-1}8 \nu_2(k)}\left(\frac p{\ell}\right)\left(\frac{\boldsymbol{\xi}^2/k-\tau}p\right), & \text{ if } d \text{ is odd and }  k  \text{ is even}.
\end{array} \right.
\end{equation}
\end{proposition}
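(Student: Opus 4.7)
My plan is to compute $\sigma^{\ast k}$ via Fourier inversion, using that $\sigma^{\ast k}=\widehat{(\sigma^\vee)^k}$ by \eqref{eq_intertwining}. Since $\mathbb{P}^d$ is parametrized by $\pi(\boldsymbol{y})=(\boldsymbol{y},\boldsymbol{y}^2)$ and $|\pi^{-1}(\boldsymbol{\xi},\tau)|\in\{0,1\}$, the weight function \eqref{eq_fSigmaS} associated with $\sigma$ is $w=p\,\mathbf{1}_{\mathbb{P}^d}$, so $\sigma^\vee$ factorises as a product of one-dimensional Gauss sums, which are evaluated explicitly by Lemma \ref{lem_GenGaussSum}. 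Raising to the $k$-th power and applying the Fourier transform yields two contributions depending on whether the ``time'' variable vanishes; the nontrivial contribution is another Gauss sum whose parity dictates whether it reduces to a geometric sum or a character sum handled by Lemma \ref{lem_weightedGS}.

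\textbf{Detailed steps.} Writing $\boldsymbol{x}=(\boldsymbol{z},t)\in\mathbb{F}_p^{d+1}$, a direct calculation from \eqref{eq_fSigmaSHat} and Lemma \ref{lem_GenGaussSum} gives
\begin{equation*}
\sigma^\vee(\boldsymbol{z},0)=\mathbf{1}_{\{\boldsymbol{z}=\boldsymbol{0}\}},\qquad
\sigma^\vee(\boldsymbol{z},t)=\varepsilon_p^{d}\,p^{-d/2}\left(\tfrac{t}{p}\right)^{d} e\!\left(-\tfrac{\boldsymbol{z}^2}{4t}\right)\quad(t\neq 0),
\end{equation*}
where the condition $p>k\geq 2$ ensures $4t\neq 0$ in $\mathbb{F}_p$. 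Taking the $k$-th power preserves the vanishing at $t=0$ and yields $\varepsilon_p^{dk}p^{-dk/2}\left(\frac{t}{p}\right)^{dk} e(-k\boldsymbol{z}^2/(4t))$ for $t\neq 0$. Now I Fourier-transform back: the $t=0$ contribution gives $1$, while for $t\neq 0$ the sum over $\boldsymbol{z}$ factors into one-dimensional Gauss sums and Lemma \ref{lem_GenGaussSum} (applied with $a=-k/(4t)$, $b=-\xi_i$) produces a factor $e(\boldsymbol{\xi}^2 t/k)\left(\frac{-k/(4t)}{p}\right)^d\varepsilon_p^d\,p^{d/2}$. Using $\left(\frac{-k/(4t)}{p}\right)=\left(\frac{-k}{p}\right)\left(\frac{t}{p}\right)$ and collecting constants, one obtains
\begin{equation*}
\sigma^{\ast k}(\boldsymbol{\xi},\tau)=1+\varepsilon_p^{d(k+1)}\,p^{d(1-k)/2}\left(\tfrac{-k}{p}\right)^{d} T(\alpha),\qquad \alpha:=\tfrac{\boldsymbol{\xi}^2}{k}-\tau,
\end{equation*}
where $T(\alpha):=\sum_{t\in\mathbb{F}_p^\times}\left(\frac{t}{p}\right)^{d(k+1)}e(t\alpha)$.

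\textbf{Case distinction and final identification.} The key dichotomy is the parity of $d(k+1)$. If $d(k+1)$ is even, i.e.\ either $d$ is even or $k$ is odd, then $\left(\frac{t}{p}\right)^{d(k+1)}=1$ on $\mathbb{F}_p^\times$ and \eqref{eq_SumCharct} gives $T(\alpha)=p\,\mathbf{1}_{\{\alpha=0\}}-1$. If $d(k+1)$ is odd, i.e.\ $d$ is odd and $k$ is even, then Lemma \ref{lem_weightedGS} yields $T(\alpha)=\left(\frac{\alpha}{p}\right)\varepsilon_p\sqrt{p}$. It remains to reduce the prefactor $\left(\frac{-k}{p}\right)^d$ to the explicit form appearing in \eqref{eq_varphi}. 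When $d$ is even this factor is $1$; when $d$ is odd, the first supplement \eqref{eq_1stSupp} together with quadratic reciprocity \eqref{eq_LQR} gives, for $k$ odd,
\begin{equation*}
\left(\tfrac{-k}{p}\right)=(-1)^{(p-1)/2}(-1)^{(p-1)(k-1)/4}\left(\tfrac{p}{k}\right)=(-1)^{(p-1)(k+1)/4}\left(\tfrac{p}{k}\right),
\end{equation*}
while for $k=2^{\nu_2(k)}\ell$ even with $\ell$ odd, applying additionally the second supplement \eqref{eq_2ndSupp} yields
\begin{equation*}
\left(\tfrac{-k}{p}\right)=(-1)^{(p-1)(\ell+1)/4+(p^2-1)\nu_2(k)/8}\left(\tfrac{p}{\ell}\right).
\end{equation*}
Combining the two expressions of $T$ with these evaluations of $\left(\frac{-k}{p}\right)^d$ produces exactly the three cases of $\varphi$.

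\textbf{Main obstacle.} The conceptual part is quick; the real challenge is careful bookkeeping of the various scalar factors $\varepsilon_p^{?}$, $p^{?}$, Legendre symbols and sign exponents, and matching them cleanly with \eqref{eq_varphi} across the three parity regimes. Keeping track of when $\left(\frac{\cdot}{p}\right)^d=\left(\frac{\cdot}{p}\right)$ versus $1$, and correctly reducing $(p-1)/2+(p-1)(k-1)/4$ to $(p-1)(k+1)/4$ in the odd-$k$ case (and the analogous identity in the even-$k$ case), is where most of the technical care is needed.
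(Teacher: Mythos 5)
Your proposal is correct and follows essentially the same route as the paper: compute $\sigma^\vee$ via completion of the square and Gauss sums, raise to the $k$-th power, Fourier invert, and reduce the prefactor $\left(\tfrac{-k}{p}\right)^d$ using quadratic reciprocity and the two supplements. The only (cosmetic) difference is that you organize the case analysis around the parity of $d(k+1)$ rather than first splitting on the parity of $d$, which slightly streamlines the bookkeeping but changes nothing of substance.
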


\begin{proof}
It was verified in \cite[Eq.\@ (17)]{MT04} that $\sigma^\vee=\delta_0+K$, where the Dirac delta $\delta_0$ is defined as usual via $\delta_0({\bf 0},0)=1$ and $\delta_0(\boldsymbol{x},t)=0$ if $(\boldsymbol{x},t)\neq ({\bf 0},0)$, and $K$ is the Bochner--Riesz kernel (see \cite[p.~52]{Gr13}), defined as $K(\boldsymbol{x},0)=0$ for all $\boldsymbol{x}\in\mathbb F_p^d$,  and 
\[K(\boldsymbol{x},t)=\frac1{|\mathbb P^d|}\sum_{(\boldsymbol{\xi},\boldsymbol{\xi}^2)\in \mathbb P^d} e(\boldsymbol{x}\cdot\boldsymbol{\xi}+t\boldsymbol{\xi}^2)=p^{-d} S(t)^{d} e\left(-\tfrac{\boldsymbol{x}^2}{4t}\right),\text{ if } t\neq 0.\]
Here, $S(t)$ denotes the quadratic Gauss sum as in \eqref{eq_Sa1}.
 Using  Fourier inversion \eqref{eq_FI} and the intertwining property \eqref{eq_intertwining}, we have for $k\geq 2$:
\begin{align*}
    \sigma^{\ast k}(\boldsymbol{\xi},\tau)
    &=[(\sigma^\vee)^k]^\wedge(\boldsymbol{\xi},\tau)
    =1+\sum_{(\boldsymbol{x},t)\in \mathbb F_p^d\times\mathbb F_p^\times} (\sigma^\vee)^k(\boldsymbol{x},t)e(-\boldsymbol{x}\cdot\boldsymbol{\xi}-t\tau)\\
    &=1+p^{-dk}\sum_{(\boldsymbol{x},t)\in \mathbb F_p^d\times\mathbb F_p^\times} S(t)^{dk}e\left(-\tfrac{k{\boldsymbol{x}}^2}{4t}\right)e(-{\boldsymbol{x}}\cdot{\boldsymbol{\xi}})e(-t\tau).
\end{align*}
Completing the square,
\[e\left(-\tfrac{k{\boldsymbol{x}}^2}{4t}\right)e(-{\boldsymbol{x}}\cdot{\boldsymbol{\xi}})
=e\left(-\tfrac{k}{4t}(\boldsymbol{{x}}+\tfrac{2t}{k}{\boldsymbol{\xi}})^2\right)e\left(\tfrac{t{\boldsymbol{\xi}}^2}k\right),\]
we obtain via a shift in the $\boldsymbol{x}$-variable:
\begin{align}
    \sigma^{\ast k}(\boldsymbol{\xi},\tau)
    &=1+p^{-dk} \sum_{t\in \mathbb F_p^\times} S(t)^{dk} e(t(\tfrac{{\boldsymbol{\xi}}^2}k-\tau))\left(\sum_{\boldsymbol{{x}}\in \mathbb F_p^d} e(-\tfrac{k\boldsymbol{{x}}^2}{4t} )\right)\notag\\
    &=1+p^{-dk} \sum_{t\in \mathbb F_p^\times} S(t)^{dk} e(t(\tfrac{\boldsymbol{{\xi}}^2}k-\tau))  S(-\tfrac{k}{4t})^{d}   \notag
    \\
    &=1+p^{-dk} S(1)^{d(k+1)}\sum_{t\in\mathbb F_p^\times}\left(\frac t p\right)^{dk}\left(\frac{-k/(4t)}p\right)^{d} e(t(\tfrac{\boldsymbol{\xi}^2}k-\tau)),\label{eq_sigmak}
\end{align} 
where the passage to the last line uses \eqref{eq_Sa1}. We split the analysis into two cases, depending on the parity of $d$. If $d$ is odd, then we will further split into two subcases, depending on the parity of $k$.\\

\noindent{\bf Case 1.}
If $d$ is even, then  all the powers of Legendre symbols appearing in \eqref{eq_sigmak} are equal to $1$. 
Appealing to \eqref{eq_S1}, we then have that
\[ \sigma^{\ast k}(\boldsymbol{\xi},\tau)=1+\varepsilon_p^{d(k+1)}p^{\frac{d(1-k)}2}\sum_{t\in\mathbb F_p^\times} e(t(\tfrac{\boldsymbol{\xi}^2}k-\tau)).\]
By \eqref{eq_SumCharct}, the latter sum evaluates to
\[\sum_{t\in\mathbb F_p^\times} e(t(\tfrac{\boldsymbol{\xi}^2}k-\tau))=\left\{ \begin{array}{ll}
p-1, & \text{ if } \tau=\frac{\boldsymbol{\xi}^2}{k},\\
-1, & \text{ otherwise. }
\end{array} \right.\]
For even $d$, it then holds that
\[\sigma^{\ast k}(\boldsymbol{\xi},\tau)=1+\varepsilon_p^{d(k+1)}p^{\frac{d(1-k)}2}(p{\bf 1}_{\{\tau=\boldsymbol{\xi}^2/k\}}-1).\]

\noindent{\bf Case 2.}
If $d$ is odd, then matters are simpler if $k$ is odd. In this case, $k$ is coprime to $p$ since   $k<p$.
Complete multiplicativity \eqref{eq_LegMult} of the Legendre symbol, the first supplement \eqref{eq_1stSupp}, and quadratic reciprocity \eqref{eq_LQR} together yield
\begin{multline*}
    \left(\frac t p\right)\left(\frac{-k/(4t)}p\right)=\left(\frac {-k/4} p\right)=\left(\frac {-k} p\right)=\left(\frac {-1} p\right)\left(\frac {k} p\right)\\=(-1)^{\frac{p-1}2}(-1)^{\frac{(p-1)(k-1)}4}\left(\frac pk\right)=(-1)^{\frac{(p-1)(k+1)}4}\left(\frac pk\right).
\end{multline*}
From \eqref{eq_sigmak} we then have, for odd $d$ and $k$,
\[\sigma^{\ast k}(\boldsymbol{\xi},\tau)=1+\varepsilon_p^{d(k+1)}p^{\frac{d(1-k)}2}(-1)^{\frac{(p-1)(k+1)}4}\left(\frac pk\right)(p{\bf 1}_{\{\tau=\boldsymbol{\xi}^2/k\}}-1).\]

We finally come to the case when $d$ is odd and $k$ is even.
If $k$ is not a power of 2, then $k=2^{\nu_2(k)}\ell$, for some odd integer $\ell>1$. We compute:
\begin{align}
\begin{split}\label{eq_LegendreMagic}
 \left( \dfrac{t}{p}\right)\left( \dfrac{-k/(4t)}{p}\right)
&=\left( \dfrac{-k}{p}\right)
=(-1)^{\frac{p-1}2}\left( \dfrac{k}{p}\right)\\
&=(-1)^{\frac{p-1}2}\left( \dfrac{2}{p}\right)^{\nu_2(k)}\left( \dfrac{\ell}{p}\right)
=(-1)^{\frac{(p-1)(\ell+1)}4+\frac{p^2-1}8\nu_2(k)}\left( \dfrac{p}{\ell}\right).
\end{split}
\end{align}
The last identity uses quadratic reciprocity \eqref{eq_LQR} ($\ell$ is odd  and coprime to $p$  since $k<p$) and the second supplement \eqref{eq_2ndSupp}. 
From \eqref{eq_sigmak}, we then have, for odd $d$ and even $k$,
\[\sigma^{\ast k}(\boldsymbol{\xi},\tau)=1+\varepsilon_p^{d(k+1)}p^{\frac{d(1-k)}2}(-1)^{\frac{(p-1)(\ell+1)}4+\frac{p^2-1}8\nu_2(k)}\left(\frac p{\ell}\right)\sum_{t\in\mathbb F_p^\times} \left(\frac tp\right)e(t(\tfrac{\boldsymbol{\xi}^2}k-\tau)).\]
The latter sum can be evaluated with Lemma \ref{lem_weightedGS},
\[\sum_{t\in\mathbb F_p^\times} \left(\frac tp\right)e(t(\tfrac{\boldsymbol{\xi}^2}k-\tau))=\left(\frac {{\boldsymbol{\xi}^2}/k-\tau}p\right)\varepsilon_p\sqrt{p},\]
which finally yields
\begin{equation}\label{eq_finalNon2Powers}
    \sigma^{\ast k}(\boldsymbol{\xi},\tau)=1+\varepsilon_p^{d(k+1)+1}p^{\frac{d(1-k)+1}2}(-1)^{\frac{(p-1)(\ell+1)}4+\frac{p^2-1}8\nu_2(k)}\left(\frac p\ell\right)\left(\frac {{\boldsymbol{\xi}^2}/k-\tau}p\right).
\end{equation}
If $k=2^{\nu_2(k)}$ is a power of 2, then matters are simpler. In this case, \eqref{eq_LegendreMagic} simplifies to 
\[ \left( \dfrac{t}{p}\right)\left( \dfrac{-k/(4t)}{p}\right)
=(-1)^{\frac{p-1}2}\left( \dfrac{2}{p}\right)^{\nu_2(k)},\]
and \eqref{eq_sigmak} then boils down to
\begin{equation}\label{eq_sigmakeveneven}
\sigma^{\ast k}(\boldsymbol{\xi},\tau)=1+\varepsilon_p^{d(k+1)+1}p^{\frac{d(1-k)+1}2}(-1)^{\frac{p-1}2+\frac{p^2-1}8\nu_2(k)}\left(\frac {{\boldsymbol{\xi}^2}/k-\tau}p\right).
\end{equation}
In other words, formula \eqref{eq_finalNon2Powers} is still valid for $\ell=1$ with the convention that $(\frac p1)=1$.
This completes the proof of Proposition \ref{prop_parabConv}.
\end{proof}

\begin{remark}
From \eqref{eq_sigmakeveneven}, it follows that the two-fold convolution of normalized surface measure on $\mathbb P^1\subset\mathbb F_p^{2}$, corresponding to $(d,k)=(1,2)$, is given by
\begin{equation}\label{eq_2foldconvF2}
    (\sigma\ast\sigma)({\xi},\tau)=1+(-1)^{\frac{p-1}2+\frac{p^2-1}8}\left(\frac {{{\xi}^2}/2-\tau}p\right),\,\,\,(\xi,\tau)\in\mathbb F_p^2.
\end{equation}
This expression takes on three distinct values,  depending on whether ${{\xi}^2}/2-\tau$ is or is not a square in $\mathbb F_p$, or whether ${{\xi}^2}/2-\tau$ is divisible by $p$, and each of these values occurs multiple times.

\end{remark}

\subsection{Low dimensional paraboloids in vector spaces over $\mathbb F_q$ via counting}\label{sec_CountParab}
Letting $\sigma=\sigma_{\mathbb P^d}$  denote the normalized surface measure on the paraboloid $\mathbb P^d\subset\mathbb F_q^{d+1}$, from Proposition \ref{prop_counting} we have
\begin{equation}\label{eq_countingSigma}
    \sigma^{\ast k}(\boldsymbol{\xi},\tau)= \frac{q^{d+1}}{|\mathbb{P}^{d}|^k} | \Sigma_{\mathbb P^d}^k(\boldsymbol{\xi},\tau)|,
\end{equation}
where the set $\Sigma_{\mathbb P^d}^k(\boldsymbol{\xi},\tau)$ was defined in \eqref{eq_defSigmaSet}.
Since $\mathbb P^d$ is the graph of the function $\varphi: \mathbb F_q^d\to\mathbb F_q$, $\varphi(\boldsymbol{\xi})=\boldsymbol{\xi}^2$, we have that $|\mathbb P^d|=q^d$. 
In the proof of the following result, we compute the cardinality of $\Sigma_{\mathbb P^d}^k(\boldsymbol{\xi},\tau)$, thereby generalizing to vector spaces over $\mathbb F_q$ (and not just over $\mathbb F_p$) the cases $(d,k)\in\{(1,3),(2,2)\}$ of Proposition \ref{prop_parabConv}.
\begin{proposition}\label{propositionparaboloidfinitefields}
    The two-fold convolution on $\mathbb P^2\subset\mathbb F_q^3$ is given by
\begin{equation}\label{eq_2foldconv}
    (\sigma\ast\sigma)(\boldsymbol{\xi},\tau)=\frac1q\times\left\{ \begin{array}{ll}
q\pm q\mp 1, & \text{ if } \tau=\frac{\boldsymbol{\xi}^2}{2},\\
q\mp 1, & \text{ otherwise, }
\end{array} \right.
\end{equation}
where the first choice of signs corresponds to $q\equiv 1(\textup{mod } 4)$ and the second one to $q\equiv 3(\textup{mod } 4)$.
If $p>3$, then the three-fold convolution on $\mathbb P^1\subset\mathbb F_q^2$ is given by
\begin{equation}\label{eq_3foldconv}
    (\sigma\ast\sigma\ast\sigma)({\xi},\tau)=\frac1q\times\left\{ \begin{array}{ll}
q\pm q\mp 1, & \text{ if } \tau=\frac{{\xi}^2}{3},\\
q\mp 1, & \text{ otherwise, }
\end{array} \right.
\end{equation}
where the first choice of signs corresponds to $q\equiv 1(\textup{mod } 3)$ and the second one to $q\equiv 2(\textup{mod } 3)$.

\end{proposition}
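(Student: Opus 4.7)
The plan is to combine the counting identity \eqref{eq_countingSigma} with Lemma \ref{lem_pointsofellipse} and Remark \ref{rem_degenerate}. Since $|\mathbb{P}^d|=q^d$, both cases of interest satisfy $q^{d+1}/|\mathbb{P}^d|^k = q^{-1}$, so it suffices to show that $|\Sigma_{\mathbb{P}^d}^k(\boldsymbol{\xi},\tau)|$ has the value given by multiplying the right-hand sides of \eqref{eq_2foldconv}--\eqref{eq_3foldconv} by $q$. In both situations the idea is the same: exploit the translation symmetry of the defining system to recenter the sum at its ``average'', reducing the counting problem to a single diagonal quadratic equation in two variables over $\mathbb{F}_q$ to which Lemma \ref{lem_pointsofellipse} applies.

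For $\mathbb{P}^2$, I would parametrize $(\boldsymbol{\xi}_1,\boldsymbol{\xi}_2)$ with $\boldsymbol{\xi}_1+\boldsymbol{\xi}_2=\boldsymbol{\xi}$ by $\boldsymbol{\xi}_1=\tfrac{\boldsymbol{\xi}}{2}+\boldsymbol{\eta}$, $\boldsymbol{\xi}_2=\tfrac{\boldsymbol{\xi}}{2}-\boldsymbol{\eta}$ (legitimate since $p$ is odd). Expanding $\boldsymbol{\xi}_1^2+\boldsymbol{\xi}_2^2=\boldsymbol{\xi}^2/2+2\boldsymbol{\eta}^2$ reduces the constraint $\boldsymbol{\xi}_1^2+\boldsymbol{\xi}_2^2=\tau$ to $\eta_1^2+\eta_2^2=\tfrac{1}{2}(\tau-\boldsymbol{\xi}^2/2)$, that is, $\eta_1^2-(-1)\eta_2^2=r$ with $r:=(\tau-\boldsymbol{\xi}^2/2)/2$. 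When $r\neq 0$, Lemma \ref{lem_pointsofellipse} with $c=-1$ yields $q-1$ solutions if $-1$ is a square in $\mathbb{F}_q$ and $q+1$ otherwise; when $r=0$, Remark \ref{rem_degenerate} gives $2q-1$ or $1$ in the same dichotomy. By Lemma \ref{lem_squaresmodQ}, $-1$ is a square in $\mathbb{F}_q$ precisely when $q\equiv 1\pmod{4}$, which produces exactly \eqref{eq_2foldconv}.

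For $\mathbb{P}^1$, I would set $\xi_i=\tfrac{\xi}{3}+\eta_i$ (legitimate since $p>3$), so the linear constraint becomes $\eta_1+\eta_2+\eta_3=0$, and eliminate $\eta_3=-\eta_1-\eta_2$. Expanding $\sum\xi_i^2=\xi^2/3+\sum\eta_i^2$ and $\eta_1^2+\eta_2^2+\eta_3^2 = 2(\eta_1^2+\eta_1\eta_2+\eta_2^2)$ collapses the quadratic constraint to $\eta_1^2+\eta_1\eta_2+\eta_2^2=(\tau-\xi^2/3)/2$. I then complete the square to obtain $(2\eta_1+\eta_2)^2+3\eta_2^2 = 2(\tau-\xi^2/3)$. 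The linear change $(u,v)=(2\eta_1+\eta_2,\eta_2)$ is a bijection of $\mathbb{F}_q^2$ (since $p>2$), and brings the equation to the normal form $u^2-(-3)v^2=r'$ with $r':=2(\tau-\xi^2/3)$. Since $p>3$ gives $-3\neq 0$, Lemma \ref{lem_pointsofellipse} and Remark \ref{rem_degenerate} (both with $c=-3$) apply; combining with the congruence criterion for $-3$ being a square in $\mathbb{F}_q$ recorded in Lemma \ref{lem_squaresmodQ}, namely $q\equiv 1\pmod{3}$, yields exactly \eqref{eq_3foldconv}.

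The main obstacle is really just the algebraic packaging for $\mathbb{P}^1$: choosing a substitution that both diagonalizes the resulting symmetric binary quadratic form $\eta_1^2+\eta_1\eta_2+\eta_2^2$ into the shape $X^2-cY^2$ required by Lemma \ref{lem_pointsofellipse} and is bijective on $\mathbb{F}_q^2$. Once the discriminant $-3$ surfaces, the appearance of the modulus $3$ in the dichotomy is forced and matches the assertion. The restriction $p>3$ enters twice — once to make the translation by $\xi/3$ well-defined, and once to guarantee $-3\neq 0$ so that the degenerate case $r'=0$ remains a proper pair of lines rather than a double line.
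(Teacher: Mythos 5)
Your proposal is correct and follows essentially the same route as the paper: reduce to counting $|\Sigma_{\mathbb P^d}^k(\boldsymbol{\xi},\tau)|$ via \eqref{eq_countingSigma}, recenter and diagonalize the quadratic constraint into the form $x^2-cy^2=r$ with $c=-1$ (resp.\ $c=-3$), and invoke Lemma \ref{lem_pointsofellipse}, Remark \ref{rem_degenerate} and Lemma \ref{lem_squaresmodQ}. The only difference is cosmetic — you translate by the centroid $\boldsymbol{\xi}/k$ and complete the square in $(2\eta_1+\eta_2,\eta_2)$, while the paper works with the differences $\xi_i-\xi_j$ before diagonalizing — and the resulting values of $r$ differ only by a nonzero scalar, which does not affect the count.
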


Given $(\boldsymbol{\gamma},s)\in\mathbb F_q^d\times\mathbb F_q$, we define the {\it sphere}
of center $\boldsymbol{\gamma}$ and squared radius $s$ as in \cite{IK10}:
\begin{equation}\label{eq_BallDef}
    \mathcal S(\boldsymbol{\gamma},s):=\{\boldsymbol{\eta}\in\mathbb F_q^d:(\boldsymbol{\gamma}-\boldsymbol{\eta})^2=s\}.
\end{equation}

\begin{proof}[Proof of Proposition \ref{propositionparaboloidfinitefields}]
Let us start with the two-fold convolution. 
From $\boldsymbol{\xi}=\boldsymbol{\xi}_1+\boldsymbol{\xi}_2$ and $\tau=\boldsymbol{\xi}_1^2+\boldsymbol{\xi}_2^2$ it follows that
$\boldsymbol{\xi}_1^2+(\boldsymbol{\xi}-\boldsymbol{\xi}_1)^2=\tau$, or equivalently $\boldsymbol{\xi}_1\in\mathcal{S}(\tfrac{\boldsymbol{\xi}}2,\tfrac{2\tau-\boldsymbol{\xi}^2}4)$.
In particular, 
\[|\Sigma_{\mathbb P^2}^2(\boldsymbol{\xi},\tau)|=\left |\mathcal{S}(\tfrac{\boldsymbol{\xi}}{2},\tfrac{2\tau-\boldsymbol{\xi}^2}{4})\right|.\]
If $\tau={\boldsymbol{\xi}^2}/2$,  then a translation, the first statement in Lemma \ref{lem_squaresmodQ} and Remark \ref{rem_degenerate}  imply
\[|\Sigma_{\mathbb P^2}^2(\boldsymbol{\xi},\tau)|=|\mathcal{S}(\tfrac{\boldsymbol{\xi}}2,0)|=\left\{ \begin{array}{ll}
2q-1, & \text{ if } q\equiv 1(\textup{mod }4),\\
1, & \text{ if } q\equiv 3(\textup{mod }4).
\end{array} \right.\]
If $\tau\neq{\boldsymbol{\xi}^2}/2$, then  Lemma \ref{lem_pointsofellipse} implies
\[|\Sigma_{\mathbb P^2}^2(\boldsymbol{\xi},\tau)|=\left\{ \begin{array}{ll}
q-1, & \text{ if } q\equiv 1(\textup{mod }4),\\
q+1, & \text{ if } q\equiv 3(\textup{mod }4).
\end{array} \right.\]
Identity \eqref{eq_2foldconv} follows from this via \eqref{eq_countingSigma}.  
To handle the three-fold convolution,  observe that $\xi=\xi_1+\xi_2+\xi_3$ and $\tau=\xi_1^2+\xi_2^2+\xi_3^2$ together imply
\begin{equation}\label{eq_ToCount}
    3\tau-\xi^2=(\xi_1-\xi_2)^2+(\xi_2-\xi_3)^2+(\xi_3-\xi_1)^2.
\end{equation}
We thus need to count the number of solutions to \eqref{eq_ToCount}. Changing variables $(\xi_1,\xi_2,\xi_3)\mapsto (\xi_1,\alpha_1,\alpha_2):=(\xi_1,\xi_2-\xi_1,\xi_3-\xi_2)$, the latter equals the number of solutions to $\alpha_1^2+\alpha_2^2+(\alpha_1+\alpha_2)^2=3\tau-\xi^2$
or, by further renaming $(\beta,\gamma)=:(\alpha_1+\alpha_2/2,\alpha_2/2)$, the number of solutions to 
$\beta^2+3\gamma^2=(3\tau-\xi^2)/{2}$.
If $\tau={\xi^2}/{3}$,  then  the second statement in Lemma \ref{lem_squaresmodQ} and Remark \ref{rem_degenerate} together imply
\begin{equation}\label{eq_SigmaSetCount0}
|\Sigma_{\mathbb P^1}^3(\xi,\tau)|=\left |\left\{(\beta,\gamma)\in \mathbb{F}_q^2:\,\beta^2+3\gamma^2=0\right\}\right|=\left\{ \begin{array}{ll}
2q-1, & \text{ if } q\equiv 1(\textup{mod }3),\\
1, & \text{ if } q\equiv 2(\textup{mod }3).
\end{array} \right.
\end{equation}
If $\tau\neq{\xi^2}/3$, then $r:=(3\tau-\xi^2)/2$ is nonzero, and Lemma \ref{lem_pointsofellipse} implies
\begin{equation}\label{eq_SigmaSetCount}
    |\Sigma_{\mathbb P^1}^3(\xi,\tau)|=\left |\left\{(\beta,\gamma)\in \mathbb{F}_q^2:\,\beta^2+3\gamma^2=r\right\}\right|=\left\{ \begin{array}{ll}
q-1, & \text{ if } q\equiv 1(\textup{mod }3),\\
q+1, & \text{ if } q\equiv 2(\textup{mod }3).
\end{array} \right.
\end{equation}
Identity \eqref{eq_3foldconv} follows from this via \eqref{eq_countingSigma}. This concludes the proof of the proposition.
\end{proof}

\subsection{Cones in vector spaces over $\mathbb F_p$ via Fourier analysis}\label{sec_ConesFI}
Recall the definition of the cones $\Gamma^3$ and $\Upsilon^3$ given in \eqref{eq_coneF3} and \eqref{eq_defCone2}, respectively, and  that ${\Gamma}_0^3:=\Gamma^3\cup\{{\bf 0}\}$ and ${\Upsilon}_0^3:=\Upsilon^3\cup\{{\bf 0}\}$.
\begin{proposition}\label{prop_ConicConv}
    The two-fold convolution of normalized surface measure on ${\Upsilon}_0^3\subset\mathbb F_p^4$, denoted  $\nu_\Upsilon$,  is given by
\begin{equation}\label{eq_convConeUps}  (\nu_{{\Upsilon}}\ast\nu_{{\Upsilon}})(\boldsymbol{\xi},\tau,\sigma)=\frac{p^3}{(p^2+ p-1)^2}\times\left\{
\begin{array}{ll}
p^2+ p- 1, & \text{ if } (\boldsymbol{\xi},\tau,\sigma)={\bf 0},\\
2 p - 1, & \text{ if } (\boldsymbol{\xi},\tau,\sigma)\in \Upsilon^3,\\
p+ 1, & \text{ if } (\boldsymbol{\xi},\tau,\sigma)\notin {\Upsilon}_0^3.
\end{array} \right.
    \end{equation}  
The two-fold convolution of normalized surface measure on  ${\Gamma}_0^3\subset\mathbb F_p^4$, denoted  $\nu_\Gamma$,  is given by
\begin{equation}\label{eq_convConeGam}(\nu_{{\Gamma}}\ast\nu_{{\Gamma}})(\boldsymbol{\xi},\tau,\sigma)=\frac{p^3}{(p^2\pm p\mp 1)^2}\times\left\{
\begin{array}{ll}
p^2\pm p\mp 1, & \text{ if } (\boldsymbol{\xi},\tau,\sigma)={\bf 0},\\
p\pm p \mp 1, & \text{ if } (\boldsymbol{\xi},\tau,\sigma)\in \Gamma^3,\\
p\pm 1, & \text{ if } (\boldsymbol{\xi},\tau,\sigma)\notin {\Gamma}_0^3,
\end{array} \right.
    \end{equation}
where the first choice of signs corresponds to $p\equiv 1(\textup{mod } 4)$ and the second one to $p\equiv 3(\textup{mod } 4)$.

\end{proposition}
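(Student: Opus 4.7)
My plan is to adapt the Fourier-analytic strategy of Proposition \ref{prop_parabConv}. The key identity is $\nu_S\ast\nu_S=\widehat{(\nu_S^\vee)^2}$, so I would first compute $\nu_S^\vee$ explicitly for each cone, square it, and then Fourier transform back via the intertwining property \eqref{eq_intertwining}.

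To compute $\nu_\Gamma^\vee(\boldsymbol{y},u,v)$ I would insert the Lagrange multiplier identity ${\bf 1}_{\tau\sigma=\boldsymbol{\xi}^2}=p^{-1}\sum_{t\in\mathbb F_p}e(t(\tau\sigma-\boldsymbol{\xi}^2))$ and separate $t=0$ from $t\in\mathbb F_p^\times$. The $t=0$ contribution yields a Dirac mass at the origin via the orthogonality relation \eqref{eq_SumCharct}. For $t\in\mathbb F_p^\times$, the inner $\sigma$-sum collapses by orthogonality to $p\,{\bf 1}_{\tau=-v/t}$, while the $\boldsymbol{\xi}$-sum factors into two one-dimensional Gauss sums and evaluates, via Lemma \ref{lem_GenGaussSum}, to $\varepsilon_p^2\,p\,e(\boldsymbol{y}^2/(4t))$. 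The remaining phase $e((\boldsymbol{y}^2/4-uv)/t)$ is then summed over $t\in\mathbb F_p^\times$ via the change of variables $t\mapsto 1/t$ and \eqref{eq_SumCharct}, producing the dichotomy $p\,{\bf 1}_{\boldsymbol{y}^2=4uv}-1$. Since $(\boldsymbol{y},u,v)\mapsto(\boldsymbol{y}/2,u,v)$ bijects the locus $\mathcal{Q}:=\{\boldsymbol{y}^2=4uv\}$ onto $\Gamma_0^3$, one obtains the three-valued formula $\nu_\Gamma^\vee=A\,{\bf 1}_{\boldsymbol 0}+B(p\,{\bf 1}_{\mathcal Q}-1)$, with $A=p^3/|\Gamma_0^3|$ and $B=p\varepsilon_p^2/|\Gamma_0^3|$. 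The cardinality $|\Gamma_0^3|=p(p^2\pm p\mp 1)$ (with signs governed by whether $-1$ is a square) follows from Lemma \ref{lem_pointsofellipse} and Remark \ref{rem_degenerate}, by splitting according to whether $\boldsymbol{\xi}^2=0$.

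The computation of $\nu_\Upsilon^\vee$ is analogous, but now the character-sum trick produces \emph{two} two-dimensional Gauss sums, one over $\boldsymbol{\xi}$ and one over $(\tau,\sigma)$, whose product contributes $\varepsilon_p^4\,p^2=p^2$, so no $\varepsilon_p$-factor survives. One obtains $\nu_\Upsilon^\vee=A'\,{\bf 1}_{\boldsymbol 0}+B'(p\,{\bf 1}_{\Upsilon_0^3}-1)$ with $A'=p^3/|\Upsilon_0^3|$ and $B'=p/|\Upsilon_0^3|$, and $|\Upsilon_0^3|=p(p^2+p-1)$ turns out to be independent of $p\!\pmod 4$, as forced by the normalization $\nu_\Upsilon^\vee(\boldsymbol 0)=1$ and directly verifiable via Lemma \ref{lem_pointsofellipse}. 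To pass from $\nu_S^\vee$ to $\nu_S\ast\nu_S$ I would square (using $\nu_S^\vee(\boldsymbol 0)=1$, $\nu_S^\vee\equiv B(p-1)$ on the ``middle'' locus, and $\nu_S^\vee\equiv -B$ off it) and apply the forward Fourier transform. The Fourier transform of the indicator of the middle locus is itself a three-valued function of the same type---obtained by running the same Gauss-sum computation in reverse, using symmetry under negation---so $\nu_S\ast\nu_S$ takes three distinct values at the origin, on the original cone, and on its complement. Collecting coefficients then yields \eqref{eq_convConeUps} and \eqref{eq_convConeGam}.

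The main obstacle is bookkeeping. The factor $\varepsilon_p^2=(\tfrac{-1}{p})=\pm 1$ appears multiple times in the $\Gamma$ computation and the signs do not cancel; combined with the two possible values of $|\Gamma_0^3|$, they produce the $p^2\pm p\mp 1$ pattern in \eqref{eq_convConeGam}. By contrast, in the $\Upsilon$ case the analogous factors square to $1$ and disappear, consistent with \eqref{eq_convConeUps} carrying no $\pmod 4$ dependence. As a final consistency check I would verify $(\nu_S\ast\nu_S)(\boldsymbol 0)=p^4/|S|$, which is immediate from counting antipodal pairs on each cone.
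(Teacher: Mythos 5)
Your proposal is correct and follows the same Fourier-inversion-plus-Gauss-sums strategy as the paper, but it is organized differently in two respects that each shortcut a substantial portion of the printed proof. First, the paper treats $\Gamma_0^3$ by a case split: for $p\equiv 1(\textup{mod }4)$ it maps $\Gamma_0^3$ onto $\Upsilon_0^3$ via $(\tau,\sigma)=(u+wv,u-wv)$ with $w^2=-1$, and for $p\equiv 3(\textup{mod }4)$ it runs a separate direct computation; you instead carry the factor $\varepsilon_p^2=(\tfrac{-1}{p})=\pm 1$ through a single computation, which produces the $p^2\pm p\mp 1$ pattern uniformly. Second, and more significantly, the paper evaluates the Fourier transform $\frak{S}$ of the indicator of the middle locus $\{\zeta=0\}$ by an explicit decomposition $\frak{S}=\frak{S}_1+\frak{S}_2$ according to whether an auxiliary variable $b$ vanishes; you observe instead that this locus is the zero set of a quadratic form linearly equivalent to the one defining the cone, so the same Lagrange-multiplier/Gauss-sum computation that produced $\nu_S^\vee$ applies verbatim with primal and dual roles exchanged --- for $\Upsilon_0^3$ the middle locus is literally $\Upsilon_0^3$ again, so $\frak{S}=|\Upsilon_0^3|\,\nu_\Upsilon^\vee$ is already known from the first step (one checks it reproduces the values $p^3+p^2-p$, $p^2-p$, $-p$). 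Both shortcuts are valid, the negation-symmetry of the cones handling the sign in the exponential, and your closing consistency check $(\nu_S\ast\nu_S)(\boldsymbol{0})=p^4/|S|$ is a sensible safeguard on the final bookkeeping.
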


\begin{proof}[Proof of Proposition \ref{prop_ConicConv}]
    We start with the proof of \eqref{eq_convConeUps}, which is  similar to  that of \eqref{eq_convConeGam} when $p\equiv 1(\textup{mod }4).$ The case $p\equiv 3(\textup{mod }4)$  of \eqref{eq_convConeGam} is  simpler and will be presented afterwards.\\

\noindent{\bf The case of } $\Upsilon_0 ^3$. Write $(\boldsymbol{x},t,s)\in \mathbb F_p^4$, where $\boldsymbol{x}=(x_1,x_2)\in\mathbb F_p^2$, for the  variables dual to $(\boldsymbol{\xi},\tau,\sigma)\in\mathbb F_p^{4}$.
Let $\zeta(\boldsymbol{x},t,s):=\boldsymbol{x}^2-t^2-s^2$.
    Start by noting that   
    \begin{equation}\label{eq_nuhatUps}
        \nu_{\Upsilon}^\vee (\boldsymbol{x},t,s)= \left\{ \begin{array}{ll}
1, & \text{ if } (\boldsymbol{x},t,s)={\bf 0},\\
\frac{p-1}{p^2+p-1}, & \text{ if } \zeta(\boldsymbol{x},t,s)=0 \text{ but }(\boldsymbol{x},t,s)\neq {\bf 0},\\
\frac{-1}{p^2+p-1}, & \text{ if } \zeta(\boldsymbol{x},t,s)\neq 0;
\end{array} \right.
    \end{equation}
Indeed, since $(\boldsymbol{\xi},\tau,\sigma)\in{\Upsilon}_0^3$ if and only if $\zeta(\boldsymbol{\xi},\tau,\sigma)=0$, it follows that
\begin{align*}
   \frac{|\Upsilon_0^3|}{p^4} \nu_{{\Upsilon}}^\vee(\boldsymbol{x},t,s)
&=\frac1{p^4}\sum_{(\boldsymbol{\xi},\tau,\sigma)\in{\Upsilon}_0^3} e((\boldsymbol{x},t,s)\cdot (\boldsymbol{\xi},\tau,\sigma))\\
&=\frac1{p^4}\sum_{(\boldsymbol{\xi},\tau,\sigma)\in\mathbb F_p^{4}} \left(\frac1p\sum_{\lambda\in\mathbb F_p}e(\lambda\zeta(\boldsymbol{\xi},\tau,\sigma))\right) e((\boldsymbol{x},t,s)\cdot (\boldsymbol{\xi},\tau,\sigma))\\
&=\frac{\delta_0(\boldsymbol{x},t,s)}{p}+\frac1{p^5}\sum_{\lambda\neq 0}\sum_{(\boldsymbol{\xi},\tau,\sigma)\in\mathbb F_p^4} e(\lambda\zeta(\boldsymbol{\xi},\tau,\sigma)+(\boldsymbol{x},t,s)\cdot (\boldsymbol{\xi},\tau,\sigma))\\
&=\frac{\delta_0(\boldsymbol{x},t,s)}{p}+\frac1{p^3}\sum_{\lambda\neq 0}  e\left(\tfrac{\zeta
(\boldsymbol{x},t,s)}{-4\lambda}\right),
\end{align*}
where the last identity follows from four consecutive applications of Lemma \ref{lem_GenGaussSum}. Therefore
\begin{align*}
    \frac{|\Upsilon_0^3|}{p^4} \nu_{{\Upsilon}}^\vee(\boldsymbol{x},t,s) 
= \frac{\delta_0(\boldsymbol{x},t,s)}{p}+\frac1{p^3}\times\left\{
\begin{array}{ll}
p-1, & \text{ if } \zeta(\boldsymbol{x},t,s)=0,\\
-1, & \text{ if } \zeta(\boldsymbol{x},t,s)\neq 0.
\end{array} \right.
\end{align*}     
From $\nu_{{\Upsilon}}^\vee({\bf 0})=1$, it then follows that $|{\Upsilon}_0^3|=p(p^2+p-1)$, which implies \eqref{eq_nuhatUps}.
We use this to compute the convolution measure $\nu_{{\Upsilon}}\ast\nu_{{\Upsilon}}$ via Fourier inversion \eqref{eq_FI}  and the intertwining property \eqref{eq_intertwining}:
\begin{align}\label{eq_FirstFInv}
\begin{split}
    &(\nu_{{\Upsilon}}\ast\nu_{{\Upsilon}})(\boldsymbol{\xi},\tau,\sigma) = [(\nu_{{\Upsilon}}^\vee)^2]^\wedge(\boldsymbol{\xi},\tau,\sigma)\\
&=1+\left(\tfrac{-1}{p^2+p-1}\right)^2\sum_{\zeta(\boldsymbol{x},t,s)\neq 0} e(-(\boldsymbol{x},t,s)\cdot (\boldsymbol{\xi},\tau,\sigma))+\left(\tfrac{p-1}{p^2+p-1}\right)^2\sum_{\zeta(\boldsymbol{x},t,s)=0 \atop (\boldsymbol{x},t,s)\neq {\bf 0}} e(-(\boldsymbol{x},t,s)\cdot (\boldsymbol{\xi},\tau,\sigma))\\
&=:1+\left(\tfrac{-1}{p^2+p-1}\right)^2\sum_{(\boldsymbol{x},t,s)\neq {\bf 0}} e(-(\boldsymbol{x},t,s)\cdot (\boldsymbol{\xi},\tau,\sigma))+\tfrac{p^2-2p}{(p^2+p-1)^2}\left(-1+\frak{S}(\boldsymbol{\xi},\tau,\sigma)\right).
\end{split}
\end{align}
Since the first sum on the previous line equals $p^4\delta_0(\boldsymbol{\xi},\tau,\sigma)-1$, our main task will be to compute 
\[\frak{S}(\boldsymbol{\xi},\tau,\sigma):=\sum_{\zeta(\boldsymbol{x},t,s)=0} e(-(\boldsymbol{x},t,s)\cdot (\boldsymbol{\xi},\tau,\sigma)).\]
Note that $\frak{S}({\bf 0})=|{\Upsilon}_0^3|$ and that $\frak{S}(\pm \xi_1,\pm \xi_2,\pm \tau,\pm \sigma)$ is independent of the choice of signs.
Changing variables $(a,b):=(s-x_2,s+x_2)$, which implies $(x_2,s)=\frac12(b-a,a+b)$, yields
\[(\boldsymbol{x},t,s)\cdot(\boldsymbol{\xi},\tau,\sigma)
=(x_1,\tfrac{b-a}2,t,\tfrac{a+b}2)\cdot(\boldsymbol{\xi},\tau,\sigma)
= x_1\xi_1+a\tfrac{\sigma-\xi_2}2+t\tau+b\tfrac{\xi_2+\sigma}2.\]
After this change of variables, $\zeta(\boldsymbol{x},t,s)=0$ if and only $x_1^2-t^2=a b$, and so
\begin{equation}\label{eq_S1S2}
    \frak{S}(\boldsymbol{\xi},\tau,\sigma)=\left(\sum_{x_1^2-t^2=a b \atop b=0}+\sum_{x_1^2-t^2=a b \atop b\neq 0}\right)e(-(\boldsymbol{x},t,s)\cdot(\boldsymbol{\xi},\tau,\sigma))=:\frak S_1(\boldsymbol{\xi},\tau,\sigma)+\frak S_2(\boldsymbol{\xi},\tau,\sigma).
\end{equation}
The first sum in \eqref{eq_S1S2}, corresponding to $b=0$, evaluates to
\begin{align}
    \frak S_1(\boldsymbol{\xi},\tau,\sigma) 
    &= \sum_{t=\pm x_1} \sum_{a\in\mathbb F_p} e(-x_1\xi_1 -t\tau) e\left(a\frac{\xi_2-\sigma}2\right)\notag\\
    &= \left(\sum_{x_1\in\mathbb F_p}  e(-x_1 (\xi_1+\tau))  + \sum_{x_1\in\mathbb F_p}  e(-x_1 (\xi_1-\tau)) -1\right)\sum_{a\in\mathbb F_p} e\left(a\frac{\xi_2-\sigma}2\right),\label{eq_2linesminus1point}
\end{align}
which is nonzero only if $\xi_2=\sigma$. More precisely, we have that
\begin{equation}\label{eq_frakS1}
    \frak S_1(\boldsymbol{\xi},\tau,\sigma) =
\left\{\begin{array}{ll}
(2p-1)p, & \text{ if } \xi_1=\tau=0 \text{ and }\xi_2=\sigma,\\
(p-1)p, & \text{ if } \xi_1=\pm \tau\neq 0 \text{ and }\xi_2=\sigma,\\
-p, & \text{ if } \xi_1\neq\pm \tau \text{ and }\xi_2=\sigma,\\
0, & \text{ otherwise}.
\end{array} \right.
\end{equation}
We proceed to compute the second sum in \eqref{eq_S1S2}, corresponding to $b\neq 0$. 
Changing variables $(A,B):=(\sigma-\xi_2,\sigma+\xi_2)$, we have that
\begin{align}
    \frak S_2(\boldsymbol{\xi},\tau,\sigma)&= \sum_{x_1,t \in\mathbb F_p} \sum_{b\neq 0} e\left(-x_1\xi_1 -\frac{x_1^2-t^2}{b}\frac{\sigma-\xi_2}{2}-t\tau-b\frac{\xi_2+\sigma}2\right)\notag\\
    &=\sum_{b\neq 0} \left( \sum_{x_1\in\mathbb F_p} e(-x_1\xi_1 -\tfrac A {2b} x_1^2)\right)
    \left( \sum_{t\in\mathbb F_p} e(-t\tau+\tfrac A {2b} t^2)\right) e(-\tfrac{bB}2).\label{eq_A}
\end{align}

If $A\neq 0$, i.e.,  $\xi_2\neq \sigma$, then Lemma \ref{lem_GenGaussSum} implies
\begin{align}
    \frak{S}_2(\boldsymbol{\xi},\tau,\sigma)
&=\varepsilon_p^2 p\sum_{b\neq 0} \left( \dfrac{-A/2b}{p}\right) \left( \dfrac{A/2b}{p}\right)e\left(\tfrac{\xi_1^2 b}{2A}\right) e\left(\tfrac{\tau^2 b}{-2A}\right)e\left(-\tfrac{bB}2\right)\notag\\
&=p\varepsilon_p^2 \left( \dfrac{-1}{p}\right)\sum_{b\neq 0} e\left(\tfrac{(\xi_1^2-\tau^2)b}{2A}\right)e\left(-\tfrac{bB}2\right)\notag\\
&=p\sum_{b\neq 0} e\left(\tfrac{\zeta(\boldsymbol{\xi},\tau,\sigma)b}{2A}\right)
=p\times\left\{\begin{array}{ll}
p-1, & \text{ if } \zeta(\boldsymbol{\xi},\tau,\sigma)=0,\\
-1, & \text{ if } \zeta(\boldsymbol{\xi},\tau,\sigma)\neq 0.
\end{array} \right.\label{eq_frakS21}
\end{align}
Here, we used the facts that $\varepsilon_p^2 \left( \dfrac{-1}{p}\right)=1$, for every $p$, and $AB=\sigma^2-\xi_2^2$.
 If $A=0$, i.e., $\xi_2=\sigma$, then from \eqref{eq_A} we have that
\begin{align*}
\frak{S}_2(\boldsymbol{\xi},\tau,\sigma)&=\sum_{b\neq 0} \left( \sum_{x_1\in\mathbb F_p} e(-x_1\xi_1 )\right)
    \left( \sum_{t\in\mathbb F_p} e(-t\tau)\right) e(-\tfrac{bB}2)\\
    &=\left( \sum_{x_1\in\mathbb F_p} e(-x_1\xi_1)\right)
    \left( \sum_{t\in\mathbb F_p} e(-t\tau)\right) \left(\sum_{b\neq 0}  e(-\tfrac{bB}2)\right)\\
    &=p\delta_0(\xi_1)\times p\delta_0(\tau)\times(p\delta_0(B)-1),
\end{align*}
or equivalently
\begin{equation}\label{eq_frakS22}
    \frak{S}_2(\boldsymbol{\xi},\tau,\sigma) = p^2\times\left\{\begin{array}{ll}
p-1, & \text{ if } \xi_1=\tau=0 \text{ and } \xi_2=-\sigma,\\
-1, & \text{ if } \xi_1=\tau=0 \text{ and }\xi_2\neq -\sigma,\\
0,& \text{ otherwise }.
\end{array} \right.
\end{equation}
Identities \eqref{eq_frakS1} and \eqref{eq_frakS21}--\eqref{eq_frakS22} together imply that
\[\frak{S}(\boldsymbol{\xi},\tau,\sigma) =\frak{S}_1(\boldsymbol{\xi},\tau,\sigma)+\frak{S}_2(\boldsymbol{\xi},\tau,\sigma)=\left\{\begin{array}{ll}
p^3+p^2-p, & \text{ if } (\boldsymbol{\xi},\tau,\sigma)={\bf 0},\\
p^2-p, & \text{ if } (\boldsymbol{\xi},\tau,\sigma)\in\Upsilon^3,\\
-p,& \text{ if } (\boldsymbol{\xi},\tau,\sigma)\notin\Upsilon_0^3,
\end{array} \right.\]
from where \eqref{eq_convConeUps} follows.
This concludes the analysis of the two-fold convolution on ${\Upsilon}_0^3$.\\

\noindent{\bf The case of } $\Gamma_0^3$. If $p\equiv 1(\textup{mod }4)$, then $-1$ is a square in $\mathbb F_p$. In this case,  let $w^2=-1$. Then the change of variables $(\tau,\sigma)=(u+wv,u-wv)$ bijectively maps ${\Gamma}_0^3$ into ${\Upsilon}_0^3$, and this implies \eqref{eq_convConeGam}.

If $p\equiv 3(\textup{mod }4)$, then matters are different for $\Gamma_0^3$ but simpler than what we already did for $\Upsilon_0^3$, so we shall be brief. 
Continue to write $(\boldsymbol{x},t,s)\in \mathbb F_p^4$, where $\boldsymbol{x}=(x_1,x_2)\in\mathbb F_p^2$, for the  variables dual to $(\boldsymbol{\xi},\tau,\sigma)\in\mathbb F_p^{4}$.
Let $\eta(\boldsymbol{x},t,s):=\boldsymbol{x}^2-4ts$.
The first observation is that 
    \begin{equation}\label{eq_nuGammaHat}
        \nu_{\Gamma}^\vee (\boldsymbol{x},t,s)= \left\{ \begin{array}{ll}
1, & \text{ if } (\boldsymbol{x},t,s)={\bf 0},\\
\frac{1-p}{p^2-p+1}, & \text{ if } \eta(\boldsymbol{x},t,s)=0 \text{ but }(\boldsymbol{x},t,s)\neq {\bf 0},\\
\frac{1}{p^2-p+1}, & \text{ if } \eta(\boldsymbol{x},t,s)\neq 0;
\end{array} \right.
    \end{equation}
recall \eqref{eq_nuhatUps} and see \cite[Lemma 4.1]{KY17}. 
We then compute the convolution measure $\nu_{{\Gamma}}\ast\nu_{{\Gamma}}$ via Fourier inversion as in \eqref{eq_FirstFInv}:
\begin{multline}\label{eq_FirstMassTransport}
(\nu_{{\Gamma}}\ast\nu_{{\Gamma}})(\boldsymbol{\xi},\tau,\sigma)\\
    =1+\left(\tfrac{1}{p^2-p+1}\right)^2\sum_{(\boldsymbol{x},t,s)\neq {\bf 0}} e(-(\boldsymbol{x},t,s)\cdot(\boldsymbol{\xi},\tau,\sigma))+\tfrac{p^2-2p}{(p^2-p+1)^2}\left(-1+\sum_{\,\eta(\boldsymbol{x},t,s)= 0} e(-(\boldsymbol{x},t,s)\cdot(\boldsymbol{\xi},\tau,\sigma))\right).
\end{multline}
The first sum equals $p^4\delta_0(\boldsymbol{\xi},\tau,\sigma)-1$.
We decompose the second sum in two pieces, depending on whether
$t$ is zero or not: 
\begin{equation}
  \label{eq_GammaSum}
    \sum_{\,\eta(\boldsymbol{x},t,s)= 0} e(-(\boldsymbol{x},t,s)\cdot(\boldsymbol{\xi},\tau,\sigma))=
    \sum_{\boldsymbol{x}^2=0}\left(\sum_{s\in\mathbb F_p} e(-s\sigma)\right) e(-\boldsymbol{x}\cdot\boldsymbol{\xi})
    +\sum_{\boldsymbol{x}\in\mathbb F_p^2} \sum_{t\neq 0} e(-\boldsymbol{x}\cdot\boldsymbol{\xi}-t\tau)e\left(-\tfrac{\boldsymbol{x}^2}{4t}\sigma\right).
\end{equation}
The inner sum of the first summand on the right-hand side of \eqref{eq_GammaSum} is zero unless $\sigma=0$:
\[\sum_{s\in\mathbb F_p} e(-s\sigma) =\left\{
\begin{array}{ll}
p, & \text{ if } \sigma=0,\\
0, & \text{ if } \sigma\neq 0,
\end{array} \right.\]
and  the condition on the outer sum, $\boldsymbol{x}^2=0$, implies $\boldsymbol{x}=\boldsymbol{0}$ since $-1$ is  not a square in $\mathbb F_p$.
As for the second summand on the right-hand side of \eqref{eq_GammaSum}, if $\sigma=0$, then we have
\[\sum_{\boldsymbol{x}\in\mathbb F_p^2} \sum_{t\neq 0} e(-\boldsymbol{x}\cdot\boldsymbol{\xi}-t\tau)=\left\{
\begin{array}{ll}
p^2(p-1), & \text{ if } (\boldsymbol{\xi},\tau,\sigma)={\bf 0},\\
-p^2, & \text{ if } \boldsymbol{\xi}=\boldsymbol{0}\text{ and }\tau\neq 0,\\
0,&\text{ if }\boldsymbol{\xi}\neq\boldsymbol{0}.
\end{array} \right.\]
If $\sigma\neq 0$, then things are a bit more delicate. Completing squares, 
\begin{align}\label{eq_LongSum}
\begin{split}
    &\sum_{\boldsymbol{x}\in\mathbb F_p^2} \sum_{t\neq 0} e(-\boldsymbol{x}\cdot\boldsymbol{\xi}-t\tau)e\left(-\frac{\boldsymbol{x}^2}{4t}\sigma\right)\\
    &=\sum_{t\neq 0}\left\{\sum_{x_1\in\mathbb F_p} e\left(-\tfrac{\sigma}{4 t}\left(x_1+2\tfrac{\xi_1t}{\sigma}\right)^2\right)\right\}\left\{\sum_{x_2\in\mathbb F_p} e\left(-\tfrac{\sigma}{4 t}\left(x_2+2\tfrac{\xi_2t}{\sigma}\right)^2\right)\right\}  e\left(t\left(\tfrac{\boldsymbol{\xi}^2}{\sigma}-\tau\right)\right).
\end{split}
\end{align}
By translation symmetry, the inner Gauss sums are identical, giving rise to a contribution which equals
\[S\left(-\tfrac{\sigma}{4t}\right)^2=S(1)^2=-p,\]
and so \eqref{eq_LongSum} boils down to 
\[-p\sum_{t\neq 0}e\left(t\left(\tfrac{\boldsymbol{\xi}^2}{\sigma}-\tau\right)\right)=
\left\{
\begin{array}{ll}
p-p^2, & \text{ if } (\boldsymbol{\xi},\tau,\sigma)\in\Gamma_0^3 \text{ and }\sigma\neq 0,\\
p, & \text{ if } (\boldsymbol{\xi},\tau,\sigma)\notin \Gamma_0^3 \text{ and }\sigma\neq 0.
\end{array} \right.\]
Putting everything together, we have that
\begin{equation}\label{eq_defAp}
    (\nu_{{\Gamma}}\ast\nu_{{\Gamma}})(\boldsymbol{\xi},\tau,\sigma)=1+
    \frac{-1+p^4{\bf 1}_{(\boldsymbol{\xi},\tau,\sigma)={\bf 0}}+(p^2-2p)A_p(\boldsymbol{\xi},\tau,\sigma)}{(p^2-p+1)^2},
\end{equation}
where the function $A_p$ is given by
\begin{multline*}
    A_p(\boldsymbol{\xi},\tau,\sigma):=-1+p{\bf 1}(\sigma=0)+p^2(p-1){\bf 1}((\boldsymbol{\xi},\tau,\sigma)={\bf 0})\\
    -p^2{\bf 1}(\boldsymbol{\xi}=\boldsymbol{0},  \tau\neq 0=\sigma)+p{\bf 1}(\sigma\neq 0)-p^2{\bf 1}((\boldsymbol{\xi},\tau,\sigma)\in {\Gamma}_0^3,\,\sigma\neq 0).
\end{multline*}
The final observation is that 
\[\{(\boldsymbol{\xi},\tau,\sigma)\in\mathbb F_p^{4}: \boldsymbol{\xi}=\boldsymbol{0},  \tau\neq 0=\sigma\}\cup\{(\boldsymbol{\xi},\tau,\sigma)\in\mathbb F_p^{4}: (\boldsymbol{\xi},\tau,\sigma)\in {\Gamma}_0^3\text{ and }\sigma\neq 0\} = \Gamma^3,\]
and so \eqref{eq_defAp} simplifies to 
\[(\nu_{{\Gamma}}\ast\nu_{{\Gamma}})(\boldsymbol{\xi},\tau,\sigma)=1+\frac{p(p-1)(p-2)-1+p^4(p-1){\bf 1}_{(\boldsymbol{\xi},\tau,\sigma)={\bf 0}}-p^3(p-2){\bf 1}_{(\boldsymbol{\xi},\tau,\sigma)\in \Gamma_0^3}}{(p^2-p+1)^2},\]
from where the case $p\equiv 3(\textup{mod }4)$ of \eqref{eq_convConeGam} follows at once.
\end{proof}

\subsection{Cones in vector spaces over $\mathbb F_q$ via counting}\label{sec_ConesCount}
Letting $\nu=\nu_{\Gamma}$  denote the normalized surface measure on the cone $\Gamma^3_0\subset\mathbb F_q^{4}$, from Proposition \ref{prop_counting} it follows that
\begin{equation}\label{eq_countingNu}
    (\nu_\Gamma\ast\nu_\Gamma)(\boldsymbol{\xi},\tau,\sigma)= \frac{q^{4}}{|\Gamma_0^{3}|^2} | \Sigma_{\Gamma^3_0}^2(\boldsymbol{\xi},\tau,\sigma)|,
\end{equation}
where, similarly to \eqref{eq_defSigmaSet}, we define the set
\[\Sigma_{\Gamma_0^3}^2(\boldsymbol{\xi},\tau,\sigma) := \left\{ \left((\boldsymbol{\xi}_i,\tau_i,\sigma_i)\right)_{i=1}^2\in (\Gamma_0^3)^2: \sum_{i=1}^2(\boldsymbol{\xi}_i,\tau_i,\sigma_i)=(\boldsymbol{\xi},\tau,\sigma) \right\}. \]
In this section, we compute the convolution measure \eqref{eq_countingNu} whenever $q\equiv 3(\textup{mod } 4)$, thereby generalizing this instance of \eqref{eq_convConeGam} to vector spaces over $\mathbb F_q$ (and not just over $\mathbb F_p$).
We start by computing the size of the cone $\Gamma^3_0\subset\mathbb F_q^{4}$.

\begin{lemma}\label{numberofpointsonthecone}
 Let $q\equiv3(\textup{mod }\,4)$. Then 
 $\left| \Gamma_0^3\right|=q(q^2-q+1)$. 
\end{lemma}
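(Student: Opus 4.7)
The plan is to partition $\Gamma_0^3$ according to the value $c := \boldsymbol{\xi}^2 \in \mathbb{F}_q$ of the quadratic form on the first two coordinates, and then for each fixed $c$ count independently the number of $\boldsymbol{\xi}\in\mathbb{F}_q^2$ with $\boldsymbol{\xi}^2=c$ and the number of pairs $(\tau,\sigma)\in\mathbb{F}_q^2$ with $\tau\sigma=c$.

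First I would exploit the hypothesis $q\equiv 3(\textup{mod}\,4)$: by Lemma \ref{lem_squaresmodQ}, $-1$ is not a square in $\mathbb{F}_q$, so the equation $\xi_1^2+\xi_2^2=0$ forces $\xi_1=\xi_2=0$. In the degenerate case $c=0$, this means the only $\boldsymbol{\xi}$ contributing is $\boldsymbol{\xi}=\boldsymbol{0}$, and the condition $\tau\sigma=0$ is solved by $(2q-1)$ pairs $(\tau,\sigma)$, accounting for $2q-1$ points.

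For the generic case $c\neq 0$, Lemma \ref{lem_pointsofellipse} applied to $x^2-(-1)y^2=c$ (with $-1$ a non-square) gives exactly $q+1$ solutions $\boldsymbol{\xi}\in\mathbb{F}_q^2$, while $\tau\sigma=c\neq 0$ forces $\tau\in\mathbb{F}_q^\times$ and then $\sigma=c/\tau$, giving $q-1$ pairs. Summing over $c\in\mathbb{F}_q^\times$ yields $(q-1)\cdot (q+1)(q-1)=(q-1)^2(q+1)$ points.

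Adding the two contributions,
\begin{equation*}
|\Gamma_0^3|=(q-1)^2(q+1)+(2q-1)=q^3-q^2-q+1+2q-1=q(q^2-q+1),
\end{equation*}
as claimed. No step here is a genuine obstacle; the one subtlety to flag is verifying that each factorization is counted correctly (in particular, that the $c=0$ fiber in $(\tau,\sigma)$ is $2q-1$ rather than $2q$, because the origin lies on both axes), and that the hypothesis $q\equiv 3(\textup{mod}\,4)$ is used precisely to collapse the $c=0$ fiber in $\boldsymbol{\xi}$ to a single point via Lemma \ref{lem_squaresmodQ}.
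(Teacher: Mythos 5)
Your proof is correct and follows essentially the same route as the paper: both partition by the common value $\rho=\boldsymbol{\xi}^2=\tau\sigma$, use the fact that $-1$ is a non-square to get $1$ (resp.\ $q+1$) solutions of $\boldsymbol{\xi}^2=\rho$ for $\rho=0$ (resp.\ $\rho\neq 0$) via Lemma \ref{lem_pointsofellipse} and Remark \ref{rem_degenerate}, and conclude with the same arithmetic $(2q-1)+(q-1)^2(q+1)=q(q^2-q+1)$. The counting details you flag (the $2q-1$ for the $c=0$ fiber in $(\tau,\sigma)$) are handled correctly.
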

\begin{proof}
We  count the number of solutions $(\boldsymbol{\xi},\tau,\sigma)\in \mathbb{F}_q^{4}$ to the equation $\boldsymbol{\xi}^2=\tau\sigma=:\rho$.
If $\rho=0$, then Remark \ref{rem_degenerate} leads to  $2q-1$ solutions.
If $\rho\neq 0$, then identity \eqref{eq_hyperbola} leads to $(q-1)^2 (q+1)$ solutions;  this uses the fact that $-1$ is not a square in $\mathbb F_q$.
To conclude, note that $(2q-1)+(q-1)^2(q+1)=q(q^2-q+1)$.
\end{proof}

 \begin{proposition}\label{propconiconvolutionfinitefields}
 Let $q\equiv 3(\textup{mod }4)$. 
 Then the two-fold convolution on ${\Gamma}_0^3\subset \mathbb{F}_q^{4}$ is given by
    \begin{equation}\label{eq}
    (\nu_{{\Gamma}}\ast\nu_{{\Gamma}})(\boldsymbol{\xi},\tau,\sigma)=\frac{q^3}{(q^2-q+1)^2}\times\left\{
\begin{array}{ll}
q^2-q+1, & \text{ if } (\boldsymbol{\xi},\tau,\sigma)={\bf 0},\\
1, & \text{ if } (\boldsymbol{\xi},\tau,\sigma)\in \Gamma^3,\\
q-1, & \text{ if } (\boldsymbol{\xi},\tau,\sigma)\notin {\Gamma}_0^3.
\end{array} \right.
    \end{equation}

 \end{proposition}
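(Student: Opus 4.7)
The plan is to apply identity \eqref{eq_countingNu}. By Lemma \ref{numberofpointsonthecone}, $|\Gamma_0^3| = q(q^2-q+1)$, so the prefactor equals $q^4/|\Gamma_0^3|^2 = q^2/(q^2-q+1)^2$, and the three cases of the proposition reduce to showing that $|\Sigma_{\Gamma_0^3}^2(\boldsymbol{\eta})|$ equals $q(q^2-q+1)$, $q$, or $q(q-1)$ depending on whether $\boldsymbol{\eta}=\mathbf{0}$, $\boldsymbol{\eta}\in\Gamma^3$, or $\boldsymbol{\eta}\notin\Gamma_0^3$, respectively.

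Let $Q(\boldsymbol{\eta}) = \boldsymbol{\xi}^2 - \tau\sigma$, so that $\Gamma_0^3 = \{Q = 0\}$, and let $B$ denote its polarization, $B(\boldsymbol{\eta},\boldsymbol{\eta}') = \boldsymbol{\xi}\cdot\boldsymbol{\xi}' - (\tau\sigma' + \tau'\sigma)/2$. Expanding $Q(\boldsymbol{\eta} - \boldsymbol{\eta}_1) = 0$ and using $Q(\boldsymbol{\eta}_1) = 0$ yields the key reformulation: $\boldsymbol{\eta}_1 \in \Sigma_{\Gamma_0^3}^2(\boldsymbol{\eta})$ if and only if $Q(\boldsymbol{\eta}_1)=0$ and $B(\boldsymbol{\eta},\boldsymbol{\eta}_1) = Q(\boldsymbol{\eta})/2$. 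The case $\boldsymbol{\eta} = \mathbf{0}$ is immediate from antipodal symmetry: $|\Sigma_{\Gamma_0^3}^2(\mathbf{0})| = |\Gamma_0^3|$.

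For $\boldsymbol{\eta} \neq \mathbf{0}$, I would substitute $\boldsymbol{\eta}_1 = \boldsymbol{\eta}/2 + \boldsymbol{\zeta}$; the system then becomes $\boldsymbol{\zeta} \in \boldsymbol{\eta}^\perp$ (orthogonal with respect to $B$) together with $Q(\boldsymbol{\zeta}) = -Q(\boldsymbol{\eta})/4$. Since $|\Sigma_{\Gamma_0^3}^2(\boldsymbol{\eta})|$ is invariant under the scaling $\boldsymbol{\eta}\mapsto\lambda\boldsymbol{\eta}$ (which rescales $\Gamma_0^3$), and the orthogonal group $\mathrm{O}(Q)$ acts transitively on each nonzero level set of $Q$ and on $\Gamma^3$ by Witt's theorem, it suffices to work with well-chosen representatives. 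For $\boldsymbol{\eta}\in\Gamma^3$ I would take $\boldsymbol{\eta}=(\mathbf{0},1,0)$, for which $\boldsymbol{\eta}^\perp = \{(\boldsymbol{\xi}',\tau',0)\}$ and the condition $Q(\boldsymbol{\zeta})=0$ reduces to $\xi_1'^2 + \xi_2'^2 = 0$; since $q \equiv 3\,(\textup{mod}\,4)$ makes $-1$ a non-square in $\mathbb{F}_q$, this forces $\boldsymbol{\xi}' = \mathbf{0}$, leaving $\tau'$ free and yielding $q$ solutions. For $\boldsymbol{\eta}\notin\Gamma_0^3$ I would take the two representatives $\boldsymbol{\eta} = (\mathbf{0},1,\pm 1)$, which realize both a nonzero square and a non-square value of $Q$; parametrizing $\boldsymbol{\eta}^\perp$ via $\tau' = \mp\sigma'$, the target equation becomes $\xi_1'^2 + \xi_2'^2 \pm \sigma'^2 = \pm 1/4$, and slicing in $\sigma'$ and counting via Lemma \ref{lem_pointsofellipse} and Remark \ref{rem_degenerate} yields $2\cdot 1 + (q-2)(q+1) = q^2 - q$ in both subcases.

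Combining the three cases with the prefactor $q^2/(q^2-q+1)^2$ reproduces \eqref{eq}. The one mildly delicate point is the reduction $\boldsymbol{\eta}_1 = \boldsymbol{\eta}/2 + \boldsymbol{\zeta}$, which converts a coupled quadratic-linear system into an isotropy problem on the hyperplane $\boldsymbol{\eta}^\perp$; once this is in place, the combination of $\mathrm{O}(Q)$-transitivity and dilation symmetry removes essentially all the freedom, and the counting lemmas of \S\ref{sec_NP} handle the remaining arithmetic---crucially exploiting that $\xi_1^2 + \xi_2^2$ is anisotropic over $\mathbb{F}_q$ for $q \equiv 3\,(\textup{mod}\,4)$.
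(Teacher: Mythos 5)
Your proposal is correct, and the final counts $|\Sigma_{\Gamma_0^3}^2(\boldsymbol{\eta})|\in\{q(q^2-q+1),\,q,\,q(q-1)\}$ agree with the paper's \eqref{eq_sizeSigmaGamma} (a consistency check: they sum over $\mathbb F_q^4$ to $|\Gamma_0^3|^2=q^2(q^2-q+1)^2$, as they must). The route, however, is genuinely different from the paper's. The paper performs a direct case analysis on the coupled system $\boldsymbol{\xi}_1^2=\tau_1\sigma_1$, $(\boldsymbol{\xi}-\boldsymbol{\xi}_1)^2=(\tau-\tau_1)(\sigma-\sigma_1)$, splitting on whether $\tau$ (and then $\tau_1$, $\sigma$, $\sigma_1$) vanishes, and resolving each subcase with Lemma \ref{lem_pointsofellipse} and Remark \ref{rem_degenerate}. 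You instead exploit the fact that $\Gamma_0^3$ is the zero locus of the nondegenerate quadratic form $Q$: the midpoint substitution $\boldsymbol{\eta}_1=\boldsymbol{\eta}/2+\boldsymbol{\zeta}$ (legitimate since $p$ is odd) converts the problem into counting $\boldsymbol{\zeta}\in\boldsymbol{\eta}^\perp$ with $Q(\boldsymbol{\zeta})=-Q(\boldsymbol{\eta})/4$, and the combined action of $\mathrm{O}(Q)$ (transitive on $\Gamma^3$ and on each nonzero level set by Witt's extension theorem) and dilations reduces everything to the three representatives $(\boldsymbol{0},1,0)$ and $(\boldsymbol{0},1,\pm1)$, whose counts I have checked: $q$ in the isotropic case, and $2\cdot 1+(q-2)(q+1)=q(q-1)$ in both anisotropic square classes. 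Your approach buys much less casework and makes structurally transparent why only the square class of $Q(\boldsymbol{\eta})$ can matter (and that, for $q\equiv 3\ (\mathrm{mod}\ 4)$, both classes happen to give the same count); the paper's argument is more elementary and self-contained, avoiding Witt's theorem. If you write this up in full, the only points deserving an explicit sentence are the nondegeneracy of $Q$ (so that $\boldsymbol{\eta}^\perp$ is a genuine hyperplane even when $\boldsymbol{\eta}$ is isotropic) and the fact that the two chosen anisotropic representatives do exhaust $\mathbb F_q^\times/(\mathbb F_q^\times)^2$, which is where $q\equiv 3\ (\mathrm{mod}\ 4)$ enters.
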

 \begin{proof}
From \eqref{eq_countingNu} and Lemma \ref{numberofpointsonthecone}, it suffices to verify that
    \begin{equation}\label{eq_sizeSigmaGamma}
    |\Sigma_{\Gamma_0^3}^2(\boldsymbol{\xi},\tau,\sigma)|=\left\{
\begin{array}{ll}
q(q^2-q+1), & \text{ if } (\boldsymbol{\xi},\tau,\sigma)={\bf 0},\\
q, & \text{ if } (\boldsymbol{\xi},\tau,\sigma)\in \Gamma^3,\\
q(q-1), & \text{ if } (\boldsymbol{\xi},\tau,\sigma)\notin {\Gamma}_0^3.
\end{array} \right.
    \end{equation}
Start by noting that   $\Sigma_{\Gamma_0^3}^2(\boldsymbol{\xi},\tau,\sigma)$ has the same number of elements as the set  $$S(\boldsymbol{\xi},\tau,\sigma):=\{(\boldsymbol{\xi_1},\tau_1,\sigma_1)\in {\Gamma}_0^3:\, (\boldsymbol{\xi}-\boldsymbol{\xi}_1, \tau-\tau_1,\sigma-\sigma_1)\in {\Gamma}_0^3\}.$$
Given $(\boldsymbol{\xi}_1,\tau_1,\sigma_1)\in S(\boldsymbol{\xi},\tau,\sigma)$, we have
$(\boldsymbol{\xi}-\boldsymbol{\xi}_1)^2=(\tau-\tau_1)(\sigma-\sigma_1).$
Since $\boldsymbol{\xi}_1^2=\tau_1\sigma_1$, this can be rewritten as 
\begin{align}\label{beforesplittingconicconvolution}
\tau\sigma-\boldsymbol{\xi}^2-\tau_1\sigma=\tau \sigma_1-2\boldsymbol{\xi}\cdot\boldsymbol{\xi}_1.
\end{align}
We split the analysis of \eqref{beforesplittingconicconvolution} into two cases, depending on whether or not $\tau$ is nonzero.\\

{\bf Case 1: $\tau\neq 0$.}
In this case, the number of solutions $(\boldsymbol{\xi}_1,\tau_1,\sigma_1)\in S(\boldsymbol{\xi},\tau,\sigma)$ with $\tau_1\neq 0$ of  
\begin{align*}
\left\{
\begin{array}{ll}
\boldsymbol{\xi}_1^2=\tau_1\sigma_1 \\
  \tau\sigma-\boldsymbol{\xi}^2-\tau_1\sigma=\tau \sigma_1-2\boldsymbol{\xi}\cdot\boldsymbol{\xi}_1
\end{array} \right. 
\end{align*}
equals the number of solutions with $\tau_1\neq 0$ of the equation
\begin{equation*}
\tau\sigma-\boldsymbol{\xi}^2-\tau_1\sigma
= \frac{\tau}{\tau_1}\left(\boldsymbol{\xi}_1-\frac{\tau_1}{\tau}\boldsymbol{\xi}\right)^2- \frac{\tau_1}{\tau}\boldsymbol{\xi}^2,  
\end{equation*}
or equivalently of
 \begin{align}\label{equationconicfq}
 (\boldsymbol{\xi}^2-\tau\sigma)\frac{\tau_1-\tau}{\tau}= \frac{\tau}{\tau_1}\left(\boldsymbol{\xi}_1-\frac{\tau_1}{\tau}\boldsymbol{\xi}\right)^2.   
 \end{align}
 We split the analysis into two further subcases.\\
 
 {\it Case 1.1. $\boldsymbol{\xi}^2 \neq \tau \sigma$.}
 By Lemma \ref{lem_pointsofellipse}, any nonzero $\tau_1\neq \tau$ defines $q+1$ points $\boldsymbol{\xi}_1$ that solve \eqref{equationconicfq}. If $\tau_1=\tau$, then necessarily  $\boldsymbol{\xi}_1=\boldsymbol{\xi}$. In this case, we then have $|S(\boldsymbol{\xi},\tau,\sigma)\cap \{\tau_1\neq 0\}|=(q-2)(q+1)+1=q^2-q-1$. On the other hand, if $\tau_1=0$, then $\boldsymbol{\xi}_1^2=0$ and therefore $\boldsymbol{\xi}_1=\boldsymbol{0}$; in particular, $\sigma_1= \sigma-\boldsymbol{\xi}^2/\tau$ yields the unique solution. All in all, we have $|S(\boldsymbol{\xi}, \tau,\sigma)|=q(q-1).$\\

 {\it Case 1.2. $\boldsymbol{\xi}^2=\tau \sigma$.}
For each $\tau_1\neq 0$,  \eqref{equationconicfq} has a unique solution, whence $|S(\boldsymbol{\xi},\tau,\sigma)\cap \{\tau_1\neq 0\}|=q-1$.
On the other hand, if $\tau_1=0$, then $\boldsymbol{\xi}_1=\boldsymbol{0}$, which by \eqref{beforesplittingconicconvolution} forces $\sigma_1=0$. Therefore $|S(\boldsymbol{\xi}, \tau,\sigma)|=q.$\\

{\bf Case 2: $\tau = 0$.}
In this case, equation \eqref{beforesplittingconicconvolution} boils down to
\begin{align}\label{tau=0beforesplittingconvolution}
\boldsymbol{\xi}^2+\tau_1\sigma=2\boldsymbol{\xi}\cdot\boldsymbol{\xi}_1,
\end{align}
which can be analyzed by splitting into three further subcases.\\

{\it Case 2.1. $\boldsymbol{\xi}=\boldsymbol{0}$.}
In this case, $\tau_1\sigma=0$. If $\sigma=0$, then Lemma \ref{numberofpointsonthecone} implies $|S(\boldsymbol{0},0,0)|=\left|{\Gamma}_0^3\right|=q(q^2-q+1)$. If $\sigma\neq 0$, then $(\boldsymbol{\xi}_1,\tau_1)=(\boldsymbol{0},0)$ while $\sigma_1$ is free, and so $|S(\boldsymbol{0},0,\sigma)|=q$. \\ 

In order to handle the two remaining subcases, we observe that the number of solutions of \eqref{tau=0beforesplittingconvolution} (alongside with $\boldsymbol{\xi}_1^2=\tau_1\sigma_1$) when $\sigma_1\neq 0$ equals the number of solutions of
\begin{align}\label{tau=0sigmawedontknow}
\boldsymbol{\xi}^2+\boldsymbol{\xi}_1^2\frac{\sigma}{\sigma_1}=2\boldsymbol{\xi}\cdot\boldsymbol{\xi}_1.   
\end{align}

{\it Case 2.2. $\boldsymbol{\xi}^2\neq 0$ and $\sigma\neq 0$.}
In this case, \eqref{tau=0sigmawedontknow} can be rewritten as 
\begin{equation}\label{eq_case22}
\left(\boldsymbol{\xi}_1-\frac{\sigma_1}{\sigma}\boldsymbol{\xi}\right)^2 \frac{\sigma}{\sigma_1}=\frac{\sigma_1-\sigma}{\sigma}\boldsymbol{\xi}^2.      
\end{equation}
For each nonzero $\sigma_1\neq \sigma$, Lemma \ref{lem_pointsofellipse} implies the existence of $q+1$ distinct points $\boldsymbol{\xi}_1$ that solve \eqref{eq_case22}. 
If $\sigma_1=\sigma,$ then $\boldsymbol{\xi}_1=\boldsymbol{\xi}$ is the unique solution. Therefore $|S(\boldsymbol{\xi},\tau,\sigma)\cap \{\sigma_1\neq0\}|=(q-2)(q+1)+1=q^2-q-1.$ 
On the other hand, if $\sigma_1=0$, then $\boldsymbol{\xi}_1=\boldsymbol{0}$ and \eqref{tau=0beforesplittingconvolution} becomes $\boldsymbol{\xi}^2+\tau_1\sigma=0$, and therefore $(0,0,-{\boldsymbol{\xi}^2}/{\sigma},0)$ is the unique solution.  Thus $|S(\boldsymbol{\xi},\tau,\sigma)|=q(q-1).$\\

{\it Case 2.3. $\boldsymbol{\xi}^2\neq 0=\sigma$.}
In this case, \eqref{tau=0sigmawedontknow} boils down to 
\begin{equation}\label{eq_case23}
\boldsymbol{\xi}^2=2\boldsymbol{\xi}\cdot\boldsymbol{\xi}_1,
\end{equation}
which has exactly $q$ solutions. From \eqref{eq_case23} it follows that $\boldsymbol{\xi}_1^2\neq 0$, and for each nonzero  $\boldsymbol{\xi}_1$ there exist $q-1$ pairs $(\tau_1,\sigma_1)$, such that $\boldsymbol{\xi}_1^2=\tau_1\sigma_1$. Thus $|S(\boldsymbol{\xi}, \tau,\sigma)|=q(q-1)$. \\

To conclude the proof of \eqref{eq_sizeSigmaGamma}, note that $(\boldsymbol{\xi},\tau,\sigma)\notin\Gamma_0^3$ in Cases 1.1, 2.2 and 2.3, that $(\boldsymbol{\xi},\tau,\sigma)\in \Gamma^3$ in Case 1.2, and that $(\boldsymbol{\xi},\tau,\sigma)\in\Gamma^3_0$ in Case 2.1.
\end{proof}

\section{Proof of Theorem \ref{thm1} }\label{sec_thm1}
In this section, we prove  Theorem \ref{thm1}.
In view of Proposition \ref{prop_counting} and the beginning of the proof of Proposition \ref{propositionparaboloidfinitefields}, we aim to verify the sharp inequality
\begin{equation}\label{eq_countingL2L4Parab}
\sum_{(\boldsymbol{\xi},\tau)\in \mathbb{F}_q^3}\left|\sum_{\boldsymbol{\xi}_1\in \mathcal{S}\left(\frac{\boldsymbol{\xi}}{2},\frac{2\tau-\boldsymbol{\xi}^2}{4}\right)}{f}(\boldsymbol{\xi}_1){f}(\boldsymbol{\xi}-\boldsymbol{\xi}_1)\right|^2\le \left(q+1-\frac{1}{q}\right)\left(\sum_{\mathbb{P}^2}|f|^2\right)^2,    
\end{equation}
for every function $f:\mathbb{P}^2\to \mathbb{C}$, with equality if $f$ is constant. Here, $\sum_{\mathbb{P}^2}|f|^2:=\sum_{\boldsymbol{\xi}\in\mathbb F_q^2}|f(\boldsymbol{\xi},\boldsymbol{\xi}^2)|^2$.
\begin{remark}\label{remarkexamplelines}
In the spirit of Foschi \cite[Eq.\,(13)]{Fo07} and Mockenhaupt--Tao \cite[Lemma 5.1]{MT04}, it may seem natural to use the inequality 
\[\left|\mathcal{S}(\tfrac{\boldsymbol{\xi}}{2},\tfrac{2\tau-\boldsymbol{\xi}^{2}}{4})\right|\le \sup \left|\mathcal{S}\right|,\] where the supremum is taken over all spheres $\mathcal{S}\subset \mathbb F_q^2$.
By Cauchy--Schwarz, this would lead to 
\begin{align}\label{eq_FailedCS}
\begin{split}
\sum_{(\boldsymbol{\xi},\tau)\in \mathbb{F}_q^3}\left|\sum_{\boldsymbol{\xi}_1\in \mathcal{S}\left(\frac{\boldsymbol{\xi}}{2},\frac{2\tau-\boldsymbol{\xi}^2}{4}\right)}{f}(\boldsymbol{\xi}_1){f}(\boldsymbol{\xi}-\boldsymbol{\xi}_1)\right|^2
&\le \sup \left|\mathcal{S}\right|\sum_{(\boldsymbol{\xi},\tau)\in \mathbb{F}_q^3}\sum_{\boldsymbol{\xi}_1\in \mathcal{S}\left(\frac{\boldsymbol{\xi}}{2},\frac{2\tau-\boldsymbol{\xi}^2}{4}\right)}|f(\boldsymbol{\xi}_1){f}(\boldsymbol{\xi}-\boldsymbol{\xi}_1)|^2\\
&=\sup \left|\mathcal{S}\right|\left(\sum_{\mathbb P^2} |f|^2\right)^2,
\end{split}
\end{align}
which  is never sharp. Indeed, from Lemma \ref{lem_pointsofellipse} and Remark \ref{rem_degenerate}, it follows that  $\sup \left|\mathcal{S}\right|$ equals $2q-1$ if $q\equiv1(\textup{mod }4)$ and $q+1$ if $q\equiv3(\textup{mod }4)$. This implies inequality \eqref{eq_countingL2L4Parab} with constants $2q-1$ and $q+1$, respectively, instead of the optimal $q+1-1/q$. Thus a more refined analysis is needed. 
\end{remark}
\noindent The analysis  splits into two cases, depending on the congruence class of $q$ modulo 4.
\subsection{The case $q\equiv3(\textup{mod }4)$}\label{sec_41}
In this case, $-1$ is not a square in $\mathbb F_q$ (Lemma \ref{lem_squaresmodQ}) and  spheres  of radius zero in $\mathbb F_q^2$ are singletons (Remark \ref{rem_degenerate}).
This simplifies the analysis considerably.
Decompose the ambient space $\mathbb F_q^3$ into the critical surface $\mathcal C_2:=\{(\boldsymbol{\xi},\tau)\in\mathbb F_q^3:\, 2\tau=\boldsymbol{\xi}^2\}$ and its complement,  $\mathbb F_q^3\setminus \mathcal C_2.$
On the latter, an application of Cauchy--Schwarz similar to \eqref{eq_FailedCS} yields
\[\sum_{(\boldsymbol{\xi},\tau)\in  \mathbb F_q^3\setminus \mathcal C_2}\left|\sum_{\boldsymbol{\xi}_1\in \mathcal{S}\left(\frac{\boldsymbol{\xi}}{2},\frac{2\tau-\boldsymbol{\xi}^2}{4} \right)}{f}(\boldsymbol{\xi}_1){f}(\boldsymbol{\xi}-\boldsymbol{\xi}_1)\right|^2\le
(q+1)\sum_{(\boldsymbol{\xi},\tau)\in\mathbb F_q^3\setminus\mathcal C_2}\sum_{\boldsymbol{\xi}_1\in \mathcal{S}\left(\frac{\boldsymbol{\xi}}{2},\frac{2\tau-\boldsymbol{\xi}^2}{4}\right)}|{f}(\boldsymbol{\xi}_1){f}(\boldsymbol{\xi}-\boldsymbol{\xi}_1)|^2,\]
with equality if $f$ is a constant function.
We now add and subtract the contribution of the critical surface -- a key step of {\it mass transport} flavor which has already appeared in \eqref{eq_FirstFInv} and \eqref{eq_FirstMassTransport} --   yielding 
\begin{align}\label{q=3maininequality2to4}
\begin{split}
 \sum_{(\boldsymbol{\xi},\tau)\in \mathbb{F}_q^3}&\left|\sum_{\boldsymbol{\xi}_1\in \mathcal{S}\left(\frac{\boldsymbol{\xi}}{2},\frac{2\tau-\boldsymbol{\xi}^2}{4}\right)}{f}(\boldsymbol{\xi}_1){f}(\boldsymbol{\xi}-\boldsymbol{\xi}_1)\right|^2
 \leq(q+1)\sum_{(\boldsymbol{\xi},\tau)\in\mathbb{F}_q^3}\sum_{\boldsymbol{\xi}_1\in \mathcal{S}\left(\frac{\boldsymbol{\xi}}{2},\frac{2\tau-\boldsymbol{\xi}^2}{4}\right)}|{f}(\boldsymbol{\xi}_1){f}(\boldsymbol{\xi}-\boldsymbol{\xi}_1)|^2\\
&+\sum_{(\boldsymbol{\xi},\tau)\in\mathcal{C}_2}\left(\left|\sum_{\boldsymbol{\xi}_1\in \mathcal{S}\left(\frac{\boldsymbol{\xi}}{2},0\right)}{f}(\boldsymbol{\xi}_1){f}(\boldsymbol{\xi}-\boldsymbol{\xi}_1)\right|^2
-(q+1)\sum_{\boldsymbol{\xi}_1\in \mathcal{S}\left(\frac{\boldsymbol{\xi}}{2},0\right)}|{f}(\boldsymbol{\xi}_1){f}(\boldsymbol{\xi}-\boldsymbol{\xi}_1)|^2\right).
\end{split}
\end{align}
The first sum on the right-hand side of \eqref{q=3maininequality2to4} can be computed as follows:
\begin{align}
 \sum_{(\boldsymbol{\xi},\tau)\in\mathbb{F}_q^3}&\sum_{\boldsymbol{\xi}_1\in \mathcal{S}\left(\frac{\boldsymbol{\xi}}{2},\frac{2\tau-\boldsymbol{\xi}^2}{4}\right)}|{f}(\boldsymbol{\xi}_1){f}(\boldsymbol{\xi}-\boldsymbol{\xi}_1)|^2\notag\\
 &=\sum_{\boldsymbol{\xi}_1,\boldsymbol{\xi}_2\in \mathbb{F}_q^2}|{f}(\boldsymbol{\xi}_1){f}(\boldsymbol{\xi}_2)|^2\sum_{\tau\in \mathbb{F}_q}{\bf 1}\left(\boldsymbol{\xi}_1, \boldsymbol{\xi}_2\in \mathcal{S}\left(\tfrac{\boldsymbol{\xi}_1+\boldsymbol{\xi}_2}2,\tfrac{2\tau-\left(\boldsymbol{\xi}_1+\boldsymbol{\xi}_2\right)^2}{4}\right)\right)\label{eq_constant1}\\
 &=\sum_{\boldsymbol{\xi}_1,\boldsymbol{\xi}_2\in \mathbb{F}_q^2}|{f}(\boldsymbol{\xi}_1){f}(\boldsymbol{\xi}_2)|^2=\left(\sum_{\mathbb P^2} |f|^2\right)^2. \notag   
\end{align}
Indeed, for each given pair $(\boldsymbol{\xi}_1,\boldsymbol{\xi}_2)\in (\mathbb F_q^2)^2$, there exists a unique $\tau\in\mathbb F_q$ such that $(\boldsymbol{\xi}_1-\boldsymbol{\xi}_2)^2=2\tau-\left({\boldsymbol{\xi}_1+\boldsymbol{\xi}_2}\right)^2$, and so the inner sum in \eqref{eq_constant1} is equal to 1. 
On the other hand, since $\mathcal{S}(\tfrac{\boldsymbol{\xi}}{2},0)=\{\tfrac{\boldsymbol{\xi}}{2}\}$, the second sum on the right-hand side of \eqref{q=3maininequality2to4} boils down to 
\begin{equation}\label{eq_preCS2}
    \sum_{(\boldsymbol{\xi},\tau)\in\mathcal{C}_2}\left(\left|\sum_{\boldsymbol{\xi}_1\in \mathcal{S}\left(\frac{\boldsymbol{\xi}}{2},0\right)}{f}(\boldsymbol{\xi}_1){f}(\boldsymbol{\xi}-\boldsymbol{\xi}_1)\right|^2
-(q+1)\sum_{\boldsymbol{\xi}_1\in \mathcal{S}\left(\frac{\boldsymbol{\xi}}{2},0\right)}|{f}(\boldsymbol{\xi}_1){f}(\boldsymbol{\xi}-\boldsymbol{\xi}_1)|^2\right) = -q\sum_{\mathbb P^2}|f|^4.
\end{equation}
A second application of the Cauchy--Schwarz inequality yields 
\begin{equation}\label{cauchyschwarzcgr}
    q \sum_{\mathbb P^2}|f|^4\geq \frac1q\left(\sum_{\mathbb P^2} |f|^2\right)^2,
\end{equation} 
with equality if and only if $|f|$ is constant.
The desired inequality \eqref{eq_countingL2L4Parab} follows from \eqref{q=3maininequality2to4}--\eqref{cauchyschwarzcgr}, and 
 is sharp since constant functions turn each step of the proof into an equality. 
 Finally, the cases of equality in \eqref{cauchyschwarzcgr} imply that any maximizer must necessarily have constant modulus. This concludes the proof of Theorem \ref{thm1} when $q\equiv3(\textup{mod }4)$.

\subsection{The case $q\equiv 1(\textup{mod }4)$ }\label{subsectionq=1mod4}
In this case, $-1$ is  a square in $\mathbb F_q$ (Lemma \ref{lem_squaresmodQ}) and  spheres of radius zero  in $\mathbb F_q^2$ have $2q-1$ elements (Remark \ref{rem_degenerate}).
This complicates the analysis, which nonetheless starts in a similar way to that of \S\ref{sec_41}. Via Cauchy--Schwarz and mass transport, we have 
\begin{align}
\begin{split}\label{q=1casefirstinequality}
 &\sum_{(\boldsymbol{\xi},\tau)\in \mathbb{F}_q^3}\left|\sum_{\boldsymbol{\xi}_1\in \mathcal{S}\left(\frac{\boldsymbol{\xi}}{2},\frac{2\tau-\boldsymbol{\xi}^2}{4}\right)}{f}(\boldsymbol{\xi}_1){f}(\boldsymbol{\xi}-\boldsymbol{\xi}_1)\right|^2
\leq (q-1)\left(\sum_{\mathbb P^2}|f|^2\right)^2\\
 &+\sum_{(\boldsymbol{\xi},\tau)\in\mathcal{C}_2}\left(\left|\sum_{\boldsymbol{\xi}_1\in \mathcal{S}\left(\frac{\boldsymbol{\xi}}{2},0\right)}{f}(\boldsymbol{\xi}_1){f}(\boldsymbol{\xi}-\boldsymbol{\xi}_1)\right|^2-(q-1)\sum_{\boldsymbol{\xi}_1\in \mathcal{S}\left(\frac{\boldsymbol{\xi}}{2},0\right)}|{f}(\boldsymbol{\xi}_1){f}(\boldsymbol{\xi}-\boldsymbol{\xi}_1)|^2\right).
 \end{split}
\end{align}
Equality in \eqref{q=1casefirstinequality} is achieved if and only if 
\begin{equation}\label{eq_CasesEq1111}
    {f}(\boldsymbol{\xi}_1){f}(\boldsymbol{\xi}-\boldsymbol{\xi}_1)=C(\boldsymbol{\xi},\tau), \text{ for every } \boldsymbol{\xi}_1\in \mathcal{S}\left(\frac{\boldsymbol{\xi}}{2},\frac{2\tau-\boldsymbol{\xi}^2}{4}\right) \text{ such that } 2\tau\neq\boldsymbol{\xi}^2.
\end{equation}
\begin{remark} In the spirit of  \S\ref{sec_41}, one may try to estimate the left-hand side of \eqref{q=1casefirstinequality}  by Cauchy--Schwarz only, via the upper bound
\begin{align}\label{linesaretheenemy2to4}
\sum_{(\boldsymbol{\xi},\tau)\in{\mathbb{F}_q^3}}\left|\mathcal{S}\left(\frac{\boldsymbol{\xi}}{2},\frac{2\tau-\boldsymbol{\xi}^2}{4}\right)\right|\sum_{\boldsymbol{\xi}_1\in \mathcal{S}\left(\frac{\boldsymbol{\xi}}{2},\frac{2\tau-\boldsymbol{\xi}^2}{4}\right)}|{f}(\boldsymbol{\xi}_1){f}(\boldsymbol{\xi}-\boldsymbol{\xi}_1)|^2.
\end{align}
However, for fixed $\sum_{\mathbb P^2}|f|^2$, this expression is not maximized by constants. 
Indeed, consider the indicator function of the line (contained in $\mathbb P^2$) spanned by the vector $(1,w,0)\in\mathbb F_q^3$, where $w^2=-1$, i.e., let $f_0:={\bf 1}\left(t(1,w,0):\,t\in \mathbb{F}_q\right)$.
If $(\boldsymbol{\xi},\tau)=(t(1,w),0)$, then Remark \ref{rem_degenerate} yields
$$\left|\mathcal{S}\left(\frac{\boldsymbol{\xi}}{2}, \frac{2\tau-\boldsymbol{\xi}^2}{4}\right)\right|=\left|\mathcal{S}\left(\frac{t}{2}(1,w),0\right)\right|=2q-1.$$ 
For any  $t\in \mathbb{F}_q$, we further have that
$$\sum_{t_1(1,w)\in \mathcal{S}\left(\frac{t}{2}(1,w),0\right)}|{f_0}(t_1(1,w))|^2|{f_0}((t-t_1)(1,w))|^2=q,$$ 
and so \eqref{linesaretheenemy2to4} equals  $q^2(2q-1)$ when $f=f_0$. Since $\sum_{\mathbb P^2}|f_0|^2=q$, it then follows that
\begin{align*}
{\left(\sum_{\mathbb P^2} |f_0|^2\right)^{-2}}\sum_{(\boldsymbol{\xi},\tau)\in{\mathbb{F}_q^3}}\left|\mathcal{S}\left(\frac{\boldsymbol{\xi}}{2},\frac{2\tau-\boldsymbol{\xi}^2}{4}\right)\right|\sum_{\boldsymbol{\xi}_1\in \mathcal{S}\left(\frac{\boldsymbol{\xi}}{2},\frac{2\tau-\boldsymbol{\xi}^2}{4}\right)}|{f_0}(\boldsymbol{\xi}_1){f_0}(\boldsymbol{\xi}-\boldsymbol{\xi}_1)|^2=2q-1.
\end{align*}
This implies inequality \eqref{eq_countingL2L4Parab} with constant $2q-1$ instead of the optimal $q+1-1/q$. Thus a more refined analysis is needed. 
\end{remark}

We proceed to analyze the second summand on the right-hand side of \eqref{q=1casefirstinequality}, which  is nothing but \eqref{criticalcurveintro} when $d=k=2$.
We will  prove that it is maximized by constants for fixed $\sum_{\mathbb P^2}|f|^2$, via  the following four steps.\\

{\it Step 1:  Line decomposition.}
Let $w\in \mathbb{F}_q$ be such that $w^2=-1$ in $\mathbb F_q$. 
Given $(\boldsymbol{\xi},\tau)\in\mathcal C_2$,  the sphere $\mathcal{S}(\frac{\boldsymbol{\xi}}{2},0)$  is the union of the two lines 
\[\mathcal L_\pm(\boldsymbol{\xi}):=\left\{\boldsymbol{\xi}_1\in \mathbb{F}_q^2:\, \boldsymbol{\xi}_1=\frac{\boldsymbol{\xi}}{2}+t(1,\pm w),\, t\in \mathbb{F}_q\right\},\]
which intersect exactly at $\frac{\boldsymbol{\xi}}{2}$.
Defining the punctured lines $\mathcal L^\circ_\pm(\boldsymbol{\xi}) := \mathcal L_\pm(\boldsymbol{\xi}) \setminus\{\tfrac{\boldsymbol{\xi}}{2}\}$, we then have 
$\mathcal L^\circ_-(\boldsymbol{\xi}) \cup \mathcal L^\circ_+(\boldsymbol{\xi})=\mathcal{S}(\tfrac{\boldsymbol{\xi}}{2},0)\setminus\{\tfrac{\boldsymbol{\xi}}2\}$.
Going back to \eqref{q=1casefirstinequality}, it follows that
\begin{align}\label{eq_decomposition2to4}
\begin{split}
\sum_{(\boldsymbol{\xi},\tau)\in\mathcal{C}_2}\left|\sum_{\boldsymbol{\xi}_1\in \mathcal{S}\left(\frac{\boldsymbol{\xi}}{2},0\right)}{f}(\boldsymbol{\xi}_1){f}(\boldsymbol{\xi}-\boldsymbol{\xi}_1)\right|^2=
\sum_{(\boldsymbol{\xi},\tau)\in\mathcal{C}_2}\left|\sum_{\boldsymbol{\xi}_1\in\mathcal{L}_-^\circ(\boldsymbol{\xi})\cup\mathcal{L}_+^\circ(\boldsymbol{\xi})}{f}(\boldsymbol{\xi}_1){f}(\boldsymbol{\xi}-\boldsymbol{\xi}_1)\right|^2\\
\\+\sum_{(\boldsymbol{\xi},\tau)\in\mathcal{C}_2}\left(\left|{f}\left(\tfrac{\boldsymbol{\xi}}{2}\right)\right|^4+2\Re\left(\overline{{f}\left(\tfrac{\boldsymbol{\xi}}{2}\right)}^2\sum_{\boldsymbol{\xi}_1\in \mathcal{L}_-^\circ(\boldsymbol{\xi})\cup \mathcal{L}_+^\circ(\boldsymbol{\xi})}{f}(\boldsymbol{\xi}_1){f}(\boldsymbol{\xi}-\boldsymbol{\xi}_1)\right)\right).
\end{split}
\end{align}
We proceed to estimate the two summands on the right-hand side of \eqref{eq_decomposition2to4}.\\

{\it Step 2: Estimating the first summand.}
Let us start with
\begin{multline*}
\left|\sum_{\boldsymbol{\xi}_1\in\mathcal{L}^\circ_-(\boldsymbol{\xi})\cup\mathcal{L}^\circ_+(\boldsymbol{\xi})}f(\boldsymbol{\xi}_1)f(\boldsymbol{\xi}-\boldsymbol{\xi}_1)\right|^2
=\left|\sum_{\boldsymbol{\xi}_1\in\mathcal{L}_-^\circ(\boldsymbol{\xi})}f(\boldsymbol{\xi}_1)f(\boldsymbol{\xi}-\boldsymbol{\xi}_1)\right|^2+\left|\sum_{\boldsymbol{\xi}_1\in\mathcal{L}_+^\circ(\boldsymbol{\xi})}f(\boldsymbol{\xi}_1)f(\boldsymbol{\xi}-\boldsymbol{\xi}_1)\right|^2\\
+2\Re\left(\sum_{\boldsymbol{\xi}_1\in\mathcal{L}_-^\circ(\boldsymbol{\xi})}f(\boldsymbol{\xi}_1)f(\boldsymbol{\xi}-\boldsymbol{\xi}_1)\sum_{\boldsymbol{\xi}_1\in\mathcal{L}_+^\circ(\boldsymbol{\xi})}\overline{f(\boldsymbol{\xi}_1)f(\boldsymbol{\xi}-\boldsymbol{\xi}_1)}\right).
\end{multline*}
By Cauchy--Schwarz, it follows that
\begin{equation}
    \label{cuachyschwarzsecondpiece2to4}
\left|\sum_{\boldsymbol{\xi}_1\in\mathcal{L}_\pm^\circ(\boldsymbol{\xi})}f(\boldsymbol{\xi}_1)f(\boldsymbol{\xi}-\boldsymbol{\xi}_1)\right|^2\le (q-1)\sum_{\boldsymbol{\xi}_1\in\mathcal{L}_\pm^\circ(\boldsymbol{\xi})}|f(\boldsymbol{\xi}_1)f(\boldsymbol{\xi}-\boldsymbol{\xi}_1)|^2,
\end{equation}
where equality holds if and only if $f(\boldsymbol{\xi}_1)f(\boldsymbol{\xi}-\boldsymbol{\xi}_1)=C_\pm(\boldsymbol{\xi})$, for every $\boldsymbol{\xi}_1\in\mathcal{L}_\pm^\circ(\boldsymbol{\xi})$.
 Moreover, a repeated  application of the elementary inequality $2xy\leq x^2+y^2$ yields
\begin{equation}\label{mamgsecondpiece2to4}
\Re\left(\sum_{\boldsymbol{\xi}_1\in\mathcal{L}_-^\circ(\boldsymbol{\xi})}f(\boldsymbol{\xi}_1)f(\boldsymbol{\xi}-\boldsymbol{\xi}_1)\sum_{\boldsymbol{\xi}_1\in\mathcal{L}_+^\circ(\boldsymbol{\xi})}\overline{f(\boldsymbol{\xi}_1)f(\boldsymbol{\xi}-\boldsymbol{\xi}_1)}\right)\le  
\sum_{\boldsymbol{\xi}_1\in\mathcal{L}_-^\circ(\boldsymbol{\xi})}|f(\boldsymbol{\xi}_1)|^2\sum_{\boldsymbol{\xi}_2\in \mathcal{L}_+^\circ(\boldsymbol{\xi})}|f(\boldsymbol{\xi}_2)|^2.
\end{equation}
Inequalities \eqref{cuachyschwarzsecondpiece2to4} and \eqref{mamgsecondpiece2to4} together imply
\begin{align*}
&\left|\sum_{\boldsymbol{\xi}_1\in\mathcal{L}_-^\circ(\boldsymbol{\xi})\cup\mathcal{L}_+^\circ(\boldsymbol{\xi})}f(\boldsymbol{\xi}_1)f(\boldsymbol{\xi}-\boldsymbol{\xi}_1)\right|^2\\
&\quad\le (q-1)\sum_{\boldsymbol{\xi}_1\in \mathcal{L}_-^\circ(\boldsymbol{\xi})\cup\mathcal{L}_+^\circ(\boldsymbol{\xi})}|f(\boldsymbol{\xi}_1)f(\boldsymbol{\xi}-\boldsymbol{\xi}_1)|^2
+2\sum_{\boldsymbol{\xi}_1\in \mathcal{L}_-^\circ(\boldsymbol{\xi})}|f(\boldsymbol{\xi}_1)|^2\sum_{\boldsymbol{\xi}_2\in \mathcal{L}_+^\circ(\boldsymbol{\xi})}|f(\boldsymbol{\xi}_2)|^2,
\end{align*}
and therefore
\begin{align}\begin{split}\label{secondpieceupdated2to4}
&\left|\sum_{\boldsymbol{\xi}_1\in\mathcal{L}_-^\circ(\boldsymbol{\xi})\cup\mathcal{L}_+^\circ(\boldsymbol{\xi})}f(\boldsymbol{\xi}_1)f(\boldsymbol{\xi}-\boldsymbol{\xi}_1)\right|^2-(q-1)\sum_{\boldsymbol{\xi}_1\in \mathcal S(\frac{\boldsymbol{\xi}}2,0)}|f(\boldsymbol{\xi}_1)f(\boldsymbol{\xi}-\boldsymbol{\xi}_1)|^2\\
&\quad\le 2\sum_{\boldsymbol{\xi}_1\in \mathcal{L}_-^\circ(\boldsymbol{\xi})}|f(\boldsymbol{\xi}_1)|^2\sum_{\boldsymbol{\xi}_2\in \mathcal{L}_+^\circ(\boldsymbol{\xi})}|f(\boldsymbol{\xi}_2)|^2-
(q-1)\left|f\left(\tfrac{\boldsymbol{\xi}}{2}\right)\right|^4.
\end{split}
\end{align}

{\it Step 3: Estimating the second summand.}
We need the following estimate whose proof is omitted.

\begin{lemma}\label{lem_Cbasic}
    Let $a,b,c\in \mathbb{C}\setminus\{0\}$. 
The following inequality holds 
\begin{align*}
 \Re(a^2\overline{bc})\le \frac{|a|^2}2(|b|^2+|c|^2),
\end{align*}
and equality holds if and only if $\textup{Arg}(a)=\frac12(\textup{Arg}(b)+\textup{Arg}(c))$ and $|b|=|c|$.
\end{lemma}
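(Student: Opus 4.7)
The plan is to chain two elementary complex-analytic estimates. First, I apply the trivial bound $\Re(z)\leq|z|$ with $z:=a^{2}\overline{bc}$ to obtain
$$
\Re(a^{2}\overline{bc})\le |a^{2}\overline{bc}|=|a|^{2}|b||c|.
$$
Second, I invoke AM--GM in the form $|b||c|\leq\tfrac{1}{2}(|b|^{2}+|c|^{2})$. Multiplying by $|a|^{2}$ and concatenating the two estimates gives the desired bound. No heavier machinery is required; the proof is essentially one line.

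For the equality analysis, both links in the chain must be simultaneously tight. Equality in the first step holds precisely when $a^{2}\overline{bc}$ is a nonnegative real number, a condition expressible in terms of arguments (all well-defined since $a,b,c\neq 0$) as $2\,\textup{Arg}(a)\equiv \textup{Arg}(b)+\textup{Arg}(c)\pmod{2\pi}$, which on the principal branch yields $\textup{Arg}(a)=\tfrac{1}{2}(\textup{Arg}(b)+\textup{Arg}(c))$. Equality in the second step (AM--GM on moduli) holds precisely when $|b|=|c|$. Combining these two conditions produces the stated characterization of equality.

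I expect no genuine obstacle: this is a routine two-step estimate whose usefulness in the sequel stems not from the proof itself but from the sharp geometric form of its equality case. Indeed, the equality clause will be pivotal in Step 4 of the argument of Theorem \ref{thm1} in the regime $q\equiv 1\pmod 4$, and crucially later for the identification of maximizers in Theorem \ref{thm2}, where matching arguments on complementary lines $\mathcal{L}^{\circ}_{\pm}(\boldsymbol{\xi})$ forces the phase to satisfy a midpoint condition that is the source of the exponential form \eqref{eq_MaxShape}.
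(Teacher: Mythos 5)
The paper omits the proof of this lemma, and your two-step argument ($\Re(z)\le|z|$ followed by AM--GM on the moduli) is exactly the intended one; the inequality itself and the condition $|b|=|c|$ are handled correctly.

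There is, however, a genuine slip in your equality analysis, at the point where you pass from the congruence to the principal-value identity. Equality in the first step means $a^2\overline{bc}\in\mathbb{R}_{>0}$, i.e. $2\,\textup{Arg}(a)\equiv \textup{Arg}(b)+\textup{Arg}(c)\pmod{2\pi}$, and this does \emph{not} ``on the principal branch yield'' $\textup{Arg}(a)=\tfrac12(\textup{Arg}(b)+\textup{Arg}(c))$: with $\textup{Arg}\in(-\pi,\pi]$ the two sides can differ by $\pi$. Concretely, take $a=i$ and $b=c=-i$; then $a^2\overline{bc}=1$ and $|b|=|c|$, so equality holds in the lemma, yet $\textup{Arg}(a)=\pi/2$ while $\tfrac12(\textup{Arg}(b)+\textup{Arg}(c))=-\pi/2$. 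The correct characterization is $\textup{Arg}(a)\equiv\tfrac12(\textup{Arg}(b)+\textup{Arg}(c))\pmod{\pi}$ together with $|b|=|c|$, or more cleanly $a^2\overline{bc}>0$ and $|b|=|c|$. To be fair, this imprecision is already present in the statement of the lemma as printed, and it is harmless downstream: in \eqref{eq_CasesEqLemmaArg} and in the proof of Theorem \ref{thm2} what is actually exploited is the multiplicative identity $\rho_\star^2(\tfrac{\boldsymbol{\xi}}{2})=\rho_\star(\boldsymbol{\xi}_1)\rho_\star(\boldsymbol{\xi}-\boldsymbol{\xi}_1)$ from \eqref{characterlikeclass}, which is exactly the branch-free form $a^2=bc$ (given equal moduli) and is unaffected by the choice of argument. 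Still, your write-up should state the equality condition as a congruence rather than assert the literal identity of principal values.
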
 
Lemma \ref{lem_Cbasic} and symmetry considerations together yield
\begin{align}
 \Re\left(\overline{f\left(\tfrac{\boldsymbol{\xi}}{2}\right)}^2\sum_{\boldsymbol{\xi}_1\in \mathcal{L}_-^\circ(\boldsymbol{\xi})\cup \mathcal{L}_+^\circ(\boldsymbol{\xi})}f(\boldsymbol{\xi}_1)f(\boldsymbol{\xi}-\boldsymbol{\xi}_1)\right)
& \le \frac12\left|f\left(\tfrac{\boldsymbol{\xi}}{2}\right)\right|^2\sum_{\boldsymbol{\xi}_1\in \mathcal{L}_-^\circ(\boldsymbol{\xi})\cup \mathcal{L}_+^\circ(\boldsymbol{\xi})}\left(|f(\boldsymbol{\xi}_1)|^2+|f(\boldsymbol{\xi}-\boldsymbol{\xi}_1)|^2\right)\notag\\
 &=\left|f\left(\tfrac{\boldsymbol{\xi}}{2}\right)\right|^2\sum_{\boldsymbol{\xi}_1\in \mathcal{L}_-^\circ(\boldsymbol{\xi})\cup \mathcal{L}_+^\circ(\boldsymbol{\xi})}|f(\boldsymbol{\xi}_1)|^2.\label{firstinequality2to4}
\end{align}
Assuming that $f$ never vanishes,  equality holds in \eqref{firstinequality2to4} if and only if 
\begin{equation}\label{eq_CasesEqLemmaArg}
    |f(\boldsymbol{\xi}_1)|=|f(\boldsymbol{\xi}-\boldsymbol{\xi}_1)| \text{ and } 2\textup{Arg}\left(f\left(\tfrac{\boldsymbol{\xi}}{2}\right)\right)=\textup{Arg}(f(\boldsymbol{\xi}_1))+\textup{Arg}(f(\boldsymbol{\xi}-\boldsymbol{\xi}_1)),
\end{equation}
for every $\boldsymbol{\xi}\in \mathbb{F}_q^2$ and $\boldsymbol{\xi}_1\in \mathcal{L}_-^\circ(\boldsymbol{\xi})\cup \mathcal{L}_+^\circ(\boldsymbol{\xi})$.\\

{\it Step 4: End of proof.}
From \eqref{secondpieceupdated2to4} and \eqref{firstinequality2to4}, we see that the second summand on the right-hand side of \eqref{q=1casefirstinequality} is bounded by
\begin{align}\label{ineqalitysimplified2to4}
\sum_{(\boldsymbol{\xi},\tau)\in\mathcal{C}_2}\left(2\sum_{\boldsymbol{\xi}_1\in \mathcal{L}_-^\circ(\boldsymbol{\xi})}|f(\boldsymbol{\xi}_1)|^2\sum_{\boldsymbol{\xi}_2\in \mathcal{L}_+^\circ(\boldsymbol{\xi})}|f(\boldsymbol{\xi}_2)|^2+
(2-q)\left|f\left(\tfrac{\boldsymbol{\xi}}{2}\right)\right|^4+2\left|f\left(\tfrac{\boldsymbol{\xi}}{2}\right)\right|^2\sum_{\boldsymbol{\xi}_1\in \mathcal{L}_-^\circ(\boldsymbol{\xi})\cup \mathcal{L}_+^\circ(\boldsymbol{\xi})}|f(\boldsymbol{\xi}_1)|^2\right).  
\end{align}
This is equal to
\begin{align}\begin{split}\label{secondtolast2to4}
2\sum_{\boldsymbol{\xi}_1,\boldsymbol{\xi}_2\in \mathbb{F}_q^2}|f(\boldsymbol{\xi}_1)|^2|f(\boldsymbol{\xi}_2)|^2-q\sum_{(\boldsymbol{\xi},\tau)\in\mathcal{C}_2}\left|f\left(\tfrac{\boldsymbol{\xi}}{2}\right)\right|^4
=2\left(\sum_{\mathbb P^2} |f|^2\right)^2-q\sum_{\mathbb P^2}|f|^4,
\end{split}
\end{align}
from where the desired sharp inequality \eqref{eq_countingL2L4Parab} follows via \eqref{cauchyschwarzcgr}.
To verify that \eqref{ineqalitysimplified2to4} and \eqref{secondtolast2to4} indeed coincide,
 note that
\begin{align*}\begin{split}\label{expandingrhs2to4}
\sum_{\boldsymbol{\xi}_1\in \mathcal{L}_-^\circ(\boldsymbol{\xi})}|f(\boldsymbol{\xi}_1)|^2\sum_{\boldsymbol{\xi}_2\in \mathcal{L}_+^\circ(\boldsymbol{\xi})}|f(\boldsymbol{\xi}_2)|^2&
+\left|f\left(\tfrac{\boldsymbol{\xi}}{2}\right)\right|^4
+\left|f\left(\tfrac{\boldsymbol{\xi}}{2}\right)\right|^2\sum_{\boldsymbol{\xi}_1\in \mathcal{L}_-^\circ(\boldsymbol{\xi})\cup \mathcal{L}_+^\circ(\boldsymbol{\xi})}|f(\boldsymbol{\xi}_1)|^2
\\
&=\sum_{\boldsymbol{\xi}_1\in \mathcal{L}_-(\boldsymbol{\xi})}|f(\boldsymbol{\xi}_1)|^2\sum_{\boldsymbol{\xi}_2\in \mathcal{L}_+(\boldsymbol{\xi})}|f(\boldsymbol{\xi}_2)|^2.
\end{split}
\end{align*}
Interchanging the order of summation, we further have that
\begin{align*}
 \sum_{(\boldsymbol{\xi},\tau)\in\mathcal{C}_2}\sum_{\boldsymbol{\xi}_1\in \mathcal{L}_-(\boldsymbol{\xi})}|f(\boldsymbol{\xi}_1)|^2\sum_{\boldsymbol{\xi}_2\in \mathcal{L}_+(\boldsymbol{\xi})}|f(\boldsymbol{\xi}_2)|^2
 =\sum_{\boldsymbol{\xi}_1,\boldsymbol{\xi}_2\in \mathbb{F}_q^2}|f(\boldsymbol{\xi}_1)|^2|f(\boldsymbol{\xi}_2)|^2
\end{align*}
since
\begin{equation}\label{oneintersection2to4}
\sum_{(\boldsymbol{\xi},\tau)\in\mathcal{C}_2}{\bf 1}\left(\boldsymbol{\xi}_1\in \mathcal{L}_-(\boldsymbol{\xi}),\boldsymbol{\xi}_2\in \mathcal{L}_+(\boldsymbol{\xi})\right)=1.
\end{equation}
To verify \eqref{oneintersection2to4}, note
that the $4\times 4$ matrix associated to the system of equations
\begin{align*}
\left\{
\begin{array}{ll}
\boldsymbol{\xi}_1=\frac{\boldsymbol{\xi}}{2}+t_1(1,w) \\
 \boldsymbol{\xi}_2=\frac{\boldsymbol{\xi}}{2}+t_2(1,-w)
\end{array} \right. 
\end{align*}
 has nonzero determinant (equal to $\pm w/2$) and therefore, given any pair $(\boldsymbol{\xi}_1,\boldsymbol{\xi}_2)\in(\mathbb F_q^2)^2$, there exists a unique $\boldsymbol{\xi}\in\mathbb F_q^2$, such that $\boldsymbol{\xi}_1\in \mathcal{L}_-(\boldsymbol{\xi})$ and $\boldsymbol{\xi}_2\in \mathcal{L}_+(\boldsymbol{\xi})$. 

 Constant functions turn every single step of the preceding proof into an equality and, as in \S\ref{sec_41}, the cases of equality in \eqref{cauchyschwarzcgr} imply that any maximizer must necessarily have constant modulus.
 This concludes the proof of Theorem \ref{thm1}. 

\section{Proof of Theorem \ref{thm2}}\label{sec_thm2}
In this section, we prove  Theorem \ref{thm2}.
It suffices to show that all maximizers of inequality \eqref{eq_countingL2L4Parab} are of the form \eqref{eq_MaxShape} when $q\equiv 1(\textup{mod}\, 4)$.
This will follow from studying the functional equations satisfied by  functions which saturate the intermediate inequalities from \S\ref{subsectionq=1mod4}.

Let $f_\star:\mathbb P^2\to\Co$ be a maximizer of \eqref{eq_countingL2L4Parab}, which as usual is identified with its projection $f_\star:\mathbb F_q^2\to\Co$.
We have already observed that  $|f_\star|$ is constant (for otherwise \eqref{cauchyschwarzcgr} would be strict). Hence  ${f_\star}=\lambda\rho_\star,$ where $\lambda\in \mathbb{C}\setminus\{0\}$ and $\rho_\star:\mathbb{F}_q^2\to \mathbb{S}^1$ satisfies $\rho_\star(\boldsymbol{0})=1$. 
From the second condition in \eqref{eq_CasesEqLemmaArg}, we have 
\begin{equation}\label{characterlikeclass}
\rho_\star^2\left(\tfrac{\boldsymbol{\xi}}{2}\right)=\rho_\star(\boldsymbol{\xi}_1)\rho_\star(\boldsymbol{\xi}-\boldsymbol{\xi}_1), \text{ for every }  \boldsymbol{\xi}_1\in \mathcal L_+(\boldsymbol{\xi})\cup \mathcal L_-(\boldsymbol{\xi}). 
\end{equation}
The next result is key towards the solution of the functional equation \eqref{characterlikeclass}.
\begin{lemma}\label{middlepointlemma2to4}
Let $q=p^n$ be the power of an odd prime.
Let  $\rho:\mathbb{F}_q\to \mathbb{S}^1$ be such that $\rho(0)=1$ and
\begin{align}\label{mainconditionlemmacauchy}
\rho(x)\rho(y)=\rho\left(\tfrac{x+y}{2}\right)^2,
\text{ for every } x,y \in \mathbb{F}_q.
\end{align}
 Then there exists a unique $a\in \mathbb{F}_q$, such that 
 $$\rho\left(x\right)=\exp{\frac{2\pi i\textup{Tr}_n(ax)}{p}},\text{ for every }x\in\mathbb F_q.$$
\end{lemma}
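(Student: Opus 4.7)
The plan is to reduce the functional equation to a standard additive character equation, exploit the classification of additive characters of $\mathbb{F}_q$ via the trace map, and then show that the remaining ambiguity (a sign) is forced to be trivial.

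First, I would extract two basic consequences of the hypothesis. Setting $y=0$ in \eqref{mainconditionlemmacauchy} and using $\rho(0)=1$ yields $\rho(x)=\rho(x/2)^2$; replacing $x$ by $2x$ gives the doubling identity
\begin{equation*}
\rho(2x)=\rho(x)^2, \qquad x\in\mathbb{F}_q.
\end{equation*}
Next, substitute $(x,y)\mapsto(2x,2y)$ in \eqref{mainconditionlemmacauchy} and apply the doubling identity on the left-hand side to get
\begin{equation*}
\rho(x)^2\rho(y)^2=\rho(x+y)^2, \qquad x,y\in\mathbb{F}_q.
\end{equation*}
Thus $\tilde{\rho}:=\rho^2$ is an additive character of $\mathbb{F}_q$ with $\tilde{\rho}(0)=1$.

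Second, by the classification of characters of $(\mathbb{F}_q,+)$ recalled in \S\ref{FAFF_sec}, there is a unique $b\in\mathbb{F}_q$ with $\tilde{\rho}(x)=\exp(2\pi i\,\textup{Tr}_n(bx)/p)$. Since $p$ is odd, $2$ is invertible in $\mathbb{F}_q$, so one may set $a:=b/2$ and define
\begin{equation*}
\rho_0(x):=\exp\frac{2\pi i\,\textup{Tr}_n(ax)}{p},
\end{equation*}
which satisfies $\rho_0^2=\tilde{\rho}=\rho^2$. Consequently $\varepsilon(x):=\rho(x)/\rho_0(x)\in\{\pm 1\}$ for every $x\in\mathbb{F}_q$, and $\varepsilon(0)=1$.

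Third, I would substitute back into \eqref{mainconditionlemmacauchy}. Since $\rho_0$ is an honest additive character, $\rho_0(\tfrac{x+y}{2})^2=\rho_0(x+y)=\rho_0(x)\rho_0(y)$, and \eqref{mainconditionlemmacauchy} becomes
\begin{equation*}
\varepsilon(x)\varepsilon(y)\,\rho_0(x)\rho_0(y)=\rho_0(x)\rho_0(y),
\end{equation*}
which forces $\varepsilon(x)\varepsilon(y)=1$ for all $x,y\in\mathbb{F}_q$. Setting $y=0$ yields $\varepsilon\equiv 1$, hence $\rho=\rho_0$. Uniqueness of $a$ is immediate from the non-degeneracy of the trace pairing: if $\textup{Tr}_n((a-a')x)\equiv 0\,(\textup{mod}\,p)$ for all $x\in\mathbb{F}_q$, then $a=a'$.

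The only place I anticipate any subtlety is the passage from $\rho^2=\rho_0^2$ to $\rho=\rho_0$: a priori the pointwise sign $\varepsilon(x)$ could be wildly discontinuous (there is no topology to appeal to over $\mathbb{F}_q$), so the argument must use the functional equation rather than a connectedness or continuity argument. The trick is precisely that \eqref{mainconditionlemmacauchy} is \emph{multiplicative} in $\rho(x)$ and $\rho(y)$ on the left, which makes $\varepsilon$ itself multiplicative and, being $\{\pm 1\}$-valued with $\varepsilon(0)=1$, identically $1$.
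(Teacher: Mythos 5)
Your proof is correct. Both arguments pivot on the identity $\rho(x)^2\rho(y)^2=\rho(x+y)^2$ (obtained by evaluating the hypothesis at $(2x,2y)$ and using the doubling relation $\rho(2x)=\rho(x)^2$), but they diverge in how the resulting sign ambiguity is resolved. The paper works to show that $\rho$ \emph{itself} is additive before ever invoking the character classification: it first proves $\rho(tx)=\rho(x)^t$ for $t\in\mathbb{F}_p$ by an induction hidden in a chain of identities, deduces that $\rho$ takes values in the $p$-th roots of unity via $\rho(x)^p=\rho(px)=\rho(0)=1$, and then eliminates the alternative $\rho(x)\rho(y)=-\rho(x+y)$ by raising to the power $p+1$. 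You instead classify the additive character $\rho^2$ at once, use the invertibility of $2$ to produce a preferred square root $\rho_0$ in trace form, and feed the quotient $\varepsilon=\rho/\rho_0\in\{\pm1\}$ back into the midpoint equation, where the square on the right-hand side annihilates $\varepsilon\bigl(\tfrac{x+y}{2}\bigr)$ and forces $\varepsilon(x)\varepsilon(y)=1$, hence $\varepsilon\equiv1$. Your route avoids the induction and the roots-of-unity computation entirely; the paper's route establishes the stronger intermediate fact that $\rho$ is a homomorphism without reference to any explicit model of the characters. In both cases the oddness of $p$ is essential, entering for you through the invertibility of $2$ and for the paper through $(-1)^{p+1}=1$ combined with $\rho^p=1$. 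As you correctly flag, the passage from $\rho^2=\rho_0^2$ to $\rho=\rho_0$ cannot rely on any continuity argument and must come from the functional equation; your sign argument handles this cleanly.
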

\begin{proof}
In light of our discussion at the beginning of \S\ref{FAFF_sec}, it suffices to verify that 
\begin{equation}\label{eq_goalLemma51}
    \rho(x+y)=\rho(x)\rho(y), \text{ for every } x,y\in\mathbb F_q.
\end{equation}
From \eqref{mainconditionlemmacauchy}, it follows that 
\begin{equation}\label{eq_multConseq}
    \rho(tx)=\rho(x)^t, \text{ for any } (t,x)\in \mathbb{F}_p\times\mathbb{F}_q. 
\end{equation}
This is a direct consequence of the following chain of identities:
$$\rho((t+1)x)=\rho((t+1)x)\rho(0)=\rho\left(\tfrac{t+1}{2}x\right)^2=\rho(x)\rho(tx).$$
Since $px=0$ for every $x\in\mathbb F_q$, from \eqref{eq_multConseq} it then follows that
\begin{align}\label{rootsofunitylemmacauchy}
1=\rho(0)=\rho(px)=\rho(x)^p.    
\end{align}
On the other hand, \eqref{eq_multConseq} implies  $\rho(2x)=\rho(x)^2$ and so $\rho(x)^2\rho(y)^2=\rho(2x)\rho(2y)=\rho(x+y)^2$. Therefore $\rho(x)\rho(y)=\pm \rho(x+y).$ 
But if $\rho(x)\rho(y)=-\rho(x+y)$, then $(\rho(x)\rho(y))^{p+1}=\rho(x+y)^{p+1}$, which by \eqref{rootsofunitylemmacauchy} would imply $\rho(x)\rho(y)=\rho(x+y)$. This leads to  a contradiction since $p>2$ and $\rho$ is nonzero. Thus \eqref{eq_goalLemma51} holds, as desired.\qedhere
 \end{proof}

By \eqref{characterlikeclass} and Lemma \ref{middlepointlemma2to4},  there exist unique $a,b\in \mathbb{F}_q$ such that, for every $\eta,\zeta\in\mathbb F_q$, 
\begin{align}
&\rho_\star|_{\mathcal{L}_+({\bf 0})}(\eta(1,w))=\exp \frac{2\pi i\textup{Tr}_n(a\eta)}{p},\label{classificationa_1}\\
&\rho_\star|_{\mathcal{L}_-({\bf 0})}(\zeta(1,-w))=\exp \frac{2\pi i\textup{Tr}_n(b\zeta)}{p}.\label{classificationa_2}
\end{align}
More generally, let $\{e_1,\dots,e_n\}$ be a basis of the vector space $\mathbb{F}_q$ over $\mathbb{F}_p$. By Lemma \ref{middlepointlemma2to4}, there exists a unique $n$-tuple $(v_1,\dots, v_n)\in \mathbb{F}_q^{n}$ such that, for every  $i\in\{1,\dots,n\}$ and $\eta\in \mathbb{F}_q$,
\begin{align}\label{classificationa_3}\frac{\rho_\star}{\rho_\star(e_i(1,-w))}\Big|_{\mathcal{L}_+(2e_i(1,-w))}(\eta(1,w)+e_i(1,-w))=\exp \frac{2\pi i\textup{Tr}_n(v_i\eta)}{p}.
\end{align}
Similarly, given any $\eta(1,w)\in \mathcal{L}_+(\boldsymbol{0})$, there exist a unique $v_\eta\in \mathbb{F}_q$ such that
\begin{align}\label{classification2to4fourequation}
\frac{\rho_\star}{\rho_\star(\eta(1,w))}\Big|_{\mathcal{L}_-(2\eta(1,w))}(\eta(1,w)+\zeta(1,-w))=\exp \frac{2\pi i \textup{Tr}_n(v_\eta \zeta)}{p},
\end{align}
  for every $\zeta\in\mathbb F_q$.
  Identities \eqref{classificationa_1}--\eqref{classification2to4fourequation} together imply
\begin{align*}
\exp \frac{2\pi i\textup{Tr}_n(a\eta)}{p}&\exp \frac{2\pi i \textup{Tr}_n(v_\eta e_i)}{p}=\rho_\star(\eta(1,w))\exp \frac{2\pi i \textup{Tr}_n(v_\eta e_i)}{p}=\rho_\star(\eta(1,w)+e_i(1,-w))\\
&=\exp \frac{2\pi i\textup{Tr}_n(v_i\eta)}{p}\rho_\star(e_i(1,-w))=\exp \frac{2\pi i\textup{Tr}_n(v_i\eta)}{p}\exp \frac{2\pi i\textup{Tr}_n(be_i)}{p}. 
\end{align*}
It follows that $$\textup{Tr}_n(a\eta)+\textup{Tr}_n(v_\eta e_i)=\textup{Tr}_n(v_i\eta)+\textup{Tr}_n(be_i),$$
for every $i\in\{1,\ldots,n\}.$
The next result allows us to gain further control over the element $v_\eta$.

\begin{lemma}\label{lemmaxistenceclasstr}
 Let $\{e_1,\dots, e_n\}$ be a basis of the vector space $\mathbb{F}_q$ over $\mathbb{F}_p$. Let $t_1,\dots t_n\in \mathbb{F}_p$.  Then there exists a unique $a\in \mathbb{F}_q$ such that $\textup{Tr}_n(ae_i)=t_i$, for every $i\in\{1,\ldots,n\}$.
\end{lemma}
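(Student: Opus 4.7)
The plan is to deduce the statement from the non-degeneracy of the trace pairing. Consider the $\mathbb{F}_p$-bilinear form $B:\mathbb{F}_q\times\mathbb{F}_q\to \mathbb{F}_p$ given by $B(x,y):=\textup{Tr}_n(xy)$, and the associated $\mathbb{F}_p$-linear map
\[
\Phi:\mathbb{F}_q\longrightarrow \textup{Hom}_{\mathbb{F}_p}(\mathbb{F}_q,\mathbb{F}_p),\qquad \Phi(a)(x):=\textup{Tr}_n(ax).
\]
Both source and target are $n$-dimensional $\mathbb{F}_p$-vector spaces, so it suffices to show that $\Phi$ is injective. Once this is established, given $(t_1,\ldots,t_n)\in \mathbb{F}_p^n$, let $L\in \textup{Hom}_{\mathbb{F}_p}(\mathbb{F}_q,\mathbb{F}_p)$ be the unique $\mathbb{F}_p$-linear functional with $L(e_i)=t_i$; surjectivity of $\Phi$ then yields a unique $a\in\mathbb{F}_q$ with $\Phi(a)=L$, i.e., $\textup{Tr}_n(ae_i)=t_i$ for each $i\in\{1,\ldots,n\}$.

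Next I would establish injectivity of $\Phi$. Suppose $\Phi(a)=0$, i.e.\ $\textup{Tr}_n(ax)=0$ for every $x\in\mathbb{F}_q$. If $a\neq 0$, then $x\mapsto a^{-1}x$ is a bijection of $\mathbb{F}_q$, hence $\textup{Tr}_n(y)=0$ for every $y\in\mathbb{F}_q$. It therefore suffices to observe that $\textup{Tr}_n$ is not identically zero. This follows because, by its very definition \eqref{eq_defTrace}, $\textup{Tr}_n(x)=x+x^p+\cdots+x^{p^{n-1}}$ is a polynomial of degree $p^{n-1}<p^n=q$, which thus has strictly fewer than $q$ roots in $\mathbb{F}_q$.

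The main (and only) technical point is the non-vanishing of $\textup{Tr}_n$, which I just addressed via a degree argument. All remaining steps are formal linear algebra: $\Phi$ is injective by the argument above, hence bijective by equality of $\mathbb{F}_p$-dimensions, and the existence/uniqueness statement of the lemma is precisely the bijectivity of $\Phi$ translated through the basis $\{e_1,\ldots,e_n\}$ and its dual basis. This concludes the proof sketch.
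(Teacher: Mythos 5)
Your proof is correct and follows essentially the same route as the paper: both reduce the lemma to the injectivity of $a\mapsto(\textup{Tr}_n(ae_i))_{i=1}^n$ and conclude by comparing cardinalities (equivalently, $\mathbb{F}_p$-dimensions). The only difference is that the paper cites \cite[Theorem 2.24]{LN97} for the non-degeneracy of the trace pairing, whereas you prove it directly via the degree bound $\deg \textup{Tr}_n=p^{n-1}<q$, which makes your argument self-contained.
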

\begin{proof}
Each $a\in \mathbb{F}_q$ gives rise to a unique $n$-tuple $(\textup{Tr}_n(ae_i))_{i=1}^{n}.$ Indeed, $\textup{Tr}_n(ae_i)=\textup{Tr}_n(be_i)$ for all $i\in\{1,\dots,n\}$  implies  $\textup{Tr}_n(a\cdot)=\textup{Tr}_n(b\cdot)$, and therefore $a=b$; see \cite[Theorem 2.24]{LN97}. The map $a\mapsto (\textup{Tr}_n(ae_i))_{i=1}^n$ is thus injective from $\mathbb{F}_q$ to $\mathbb{F}_p^n$. To conclude, note that  $|\mathbb{F}_q|=|\mathbb{F}_p^n|$.  
\end{proof}

By Lemma \ref{lemmaxistenceclasstr}, $v_\eta$ is the unique element in $\mathbb{F}_q$ such that $\textup{Tr}_n(v_\eta e_i)=\textup{Tr}_n((v_i-a)\eta+b e_i)$, for every $i\in\{1,\ldots,n\}$. Consequently, if $\zeta=\sum_{i=1}^{n}\lambda_ie_i,$  for some $\lambda_i\in \mathbb{F}_p,$ then 
\begin{equation}\label{eq_TrMatch}
    \textup{Tr}_n(v_\eta \zeta)=\textup{Tr}_n\left(\sum_{i=1}^n\lambda_i(v_i-a)\eta+b\zeta\right).
\end{equation}
From \eqref{classificationa_1} and \eqref{classification2to4fourequation}--\eqref{eq_TrMatch}, it follows that
\begin{align}\begin{split}\label{almostcharacterizationclassrho}
\rho_\star(\eta(1,w)&+\zeta(1,-w))
=\rho_\star(\eta(1,w))\frac{\rho_\star(\eta(1,w)+\zeta(1,-w))}{\rho_\star(\eta(1,w))}\\
&=\rho_\star(\eta(1,w))\exp \frac{2\pi i \textup{Tr}_n(v_{\eta}\zeta)}{p}\\
&=\exp \frac{2\pi i \textup{Tr}_n(a \eta)}{p}\exp \frac{2\pi i \textup{Tr}_n(\sum_{i=1}^n\lambda_i(v_i-a)\eta+b\zeta)}{p}\\
&=\exp \frac{2\pi i \textup{Tr}_n(a \eta+ b \zeta +L(\zeta)\eta)}{p},
\end{split}
\end{align}
where  
$L:\mathbb{F}_q\to \mathbb{F}_q$ is the $\mathbb{F}_p$-linear map whose matrix representation  with respect to the basis $\{e_1,\dots, e_n\}$ has columns  $(v_i-a)_{i=1}^n$.
We proceed to investigate the map $L$, with the goal of showing that $L(\zeta)=L(1)\zeta$, for every $\zeta\in\mathbb F_q$.  

From \eqref{eq_CasesEq1111}, 
given $(\boldsymbol{\xi},s)\in\mathbb F_q^2\times\mathbb F_q^\times$, we know that
\begin{equation}\label{eq_EqCond112}
    {f_\star}(\boldsymbol{\xi}_1){f_\star}(\boldsymbol{\xi}-\boldsymbol{\xi}_1)=C(\boldsymbol{\xi},s), \text{ for every } \boldsymbol{\xi}_1\in \mathcal{S}\left(\tfrac{\boldsymbol{\xi}}{2},s\right).
\end{equation}
 Writing $\boldsymbol{\xi}=\eta(1,w)+\zeta(1,-w)$ and $\boldsymbol{\xi}_1=\eta_1(1,w)+\zeta_1(1,-w)$,  condition \eqref{eq_EqCond112} can be rewritten in terms of the function $\rho_\star$ as follows:
 \begin{align}\label{equationcircleclass2to4}
 \rho_\star((\eta-\eta_1)(1,w)+(\zeta-\zeta_1)(1,-w))
 \rho_\star(\eta_1(1,w)+\zeta_1(1,-w))
 =\exp \frac{2\pi i C(\eta,\zeta,s)}p,
 \end{align} 
 whenever  $s=((\eta_1-\frac{\eta}2)(1,w)+(\zeta_1-\frac{\zeta}2)(1,-w))^2=(2\eta_1-\eta)(2\zeta_1-\zeta)$ is nonzero. 
 From \eqref{almostcharacterizationclassrho} and \eqref{equationcircleclass2to4}, we then have 
\begin{align*}
C(\eta,\zeta,s)=&\textup{Tr}_n(a \eta_1+b \zeta_1+L(\zeta_1)\eta_1)
+\textup{Tr}_n(a(\eta-\eta_1)+b(\zeta-\zeta_1)+L(\zeta-\zeta_1)(\eta-\eta_1))\\
=&\textup{Tr}_n( a \eta+b \zeta+L(\zeta_1)\eta_1+L(\zeta-\zeta_1)(\eta-\eta_1)),
\end{align*}
whenever $(2\eta_1-\eta)(2\zeta_1-\zeta)=s$  is nonzero. 
Noting that 
$$L(\zeta_1)\eta_1+L(\zeta-\zeta_1)(\eta-\eta_1)-\frac12 L(\zeta)\eta
=\frac12 L(\zeta-2\zeta_1)(\eta-2\eta_1),$$ 
we then have 
\begin{equation}\label{eq_alnmostThere}
    \textup{Tr}_n(L(\zeta-2\zeta_1)(\eta-2\eta_1))=C(\eta,\zeta,s),
\end{equation}
whenever $(2\eta_1-\eta)(2\zeta_1-\zeta)=s$  is nonzero. 
Writing $u=\zeta-2\zeta_1$, the latter constraint becomes $\eta-2\eta_1=s/u$, and so \eqref{eq_alnmostThere} boils down to
\begin{equation}\label{eq_TrEqCus}
    \textup{Tr}_n\left(s{L(u)}/{u}\right)=C(\eta,\zeta,s), 
\end{equation}
where $u\neq 0$ is now a free variable. In particular, the left-hand side of \eqref{eq_TrEqCus} and therefore its right-hand side are actually independent  of $\eta,\zeta.$
Therefore the choice $u=1$ leads to
\begin{align}\label{characterizationrhofinal}
\textup{Tr}_n\left(s\left({L(u)}/{u}-L(1)\right)\right)=0, \text{ for every } s,u\in \mathbb{F}_q^\times.
\end{align}
From Lemma \ref{lemmaxistenceclasstr}, it then follows that $L(u)=L(1)u$, for every $u\in\mathbb F_q$. 
This concludes the proof of Theorem \ref{thm2}.

\section{Proof of Theorem \ref{thm3}}\label{sec_thm3}

In this section, we prove  Theorem \ref{thm3}.
In view of Proposition \ref{prop_counting}, we aim to verify that
\begin{equation}\label{eq_countingL2L6Parab}
\sum_{(\xi,\tau)\in \mathbb{F}_q^2}\left|\sum_{\substack{(\xi_1,\xi_2,\xi_3)\in\Sigma_{\mathbb P^1}^3(\xi,\tau)}}f(\xi_1)f(\xi_2)f(\xi_3)\right|^2\le \left(q+1-\frac{1}{q}\right)\left(\sum_{\mathbb{P}^1}|f|^2\right)^3, 
\end{equation}
for every function $f:\mathbb{P}^1\to \mathbb{C}$. Here, $\sum_{\mathbb{P}^1}|f|^2:=\sum_{\xi\in\mathbb F_q}|f(\xi,\xi^2)|^2$.
As in \S\ref{sec_thm1}, we  split the sum on the left-hand side of \eqref{eq_countingL2L6Parab} into the contribution from the critical curve, 
\[\mathcal{C}_1:=\left\{\left(\xi,\tfrac{\xi^2}{3}\right):\,\xi\in \mathbb{F}_q\right\},\]
and from its complement, $\mathbb{F}_q^2\setminus \mathcal{C}_1$. 
By Proposition \ref{propositionparaboloidfinitefields}, the cardinality of $\Sigma_{\mathbb P^1}^3(\xi,\tau)$ is constant in each of these sets. The sum over $\mathbb{F}_q^2\setminus \mathcal{C}_1$ can  be controlled by a direct application of  Cauchy--Schwarz:
\begin{align}\label{firstcauchyschwarz2to6}
\begin{split}
\sum_{(\xi,\tau)\in \mathbb{F}_q^2}\left|\sum_{(\xi_1,\xi_2,\xi_3)\in \Sigma_{\mathbb P^1}^3(\xi,\tau)}f(\xi_1)f(\xi_2)f(\xi_3)\right|^2
\le \sum_{(\xi,\tau)\in \mathcal{C}_1}\left|\sum_{(\xi_1,\xi_2,\xi_3)\in \Sigma_{\mathbb P^1}^3(\xi,\tau)}f(\xi_1)f(\xi_2)f(\xi_3)\right|^2\\
+\sum_{(\xi,\tau)\in \mathbb F_q^2\setminus\mathcal{C}_1}\left|\Sigma_{\mathbb P^1}^3(\xi,\tau)\right|\sum_{(\xi_1,\xi_2,\xi_3)\in \Sigma_{\mathbb P^1}^3(\xi,\tau)}|f(\xi_1)f(\xi_2)f(\xi_3)|^2.
\end{split}
\end{align}
The critical curve $\mathcal{C}_1$ requires a  more delicate analysis depending on the geometry of the sets  $\Sigma_{\mathbb P^1}^3(\xi,\tau)$, which we proceed to explore.  \\

Let $(\xi,\tau)\in \mathcal{C}_1$ and $(\xi_1,\xi_2,\xi_3)\in\Sigma_{\mathbb P^1}^3(\xi,\tau)$. 
From the proof of Proposition \ref{propositionparaboloidfinitefields}, we can write $$(\xi_1,\xi_2,\xi_3)=\frac13({\xi},{\xi},{\xi})+(\eta_1,\eta_2,-\eta_1-\eta_2), \textup{ for some } \eta_1,\eta_2\in\mathbb F_q,$$
 in which case
 identity \eqref{eq_ToCount} boils down to
\begin{equation}\label{eq_quadraticEta}
    \eta_1^2+\eta_1\eta_2+\eta_2^2=0.
\end{equation}
If $q\equiv 2(\textup{mod }3)$, then $-3$ is not a square in $\mathbb F_q$ (recall Lemma \ref{lem_squaresmodQ}) and equation \eqref{eq_quadraticEta} has no nonzero solutions.
If $q\equiv 1(\textup{mod }3)$, then $-3$ is a square in $\mathbb F_q$, and the solutions of \eqref{eq_quadraticEta} can be parametrized by
\[(\eta_1,\eta_2)=\ell(j,1),\text{ where }\ell\in\mathbb F_q^\times\text{ and }j^2+j+1=0.\]
The analysis thus splits into two cases, depending on the congruence class of $q$ modulo 3.

\subsection{The case $q\equiv 2(\textup{mod }3)$}
In this case, $\Sigma_{\mathbb P^1}^3(\xi,\tau)=\left\{\frac13\left({\xi},{\xi},{\xi}\right)\right\}$ whenever $(\xi,\tau)\in\mathcal C_1$. This simple structure simplifies the analysis significantly.
Invoking \eqref{eq_SigmaSetCount}, the right-hand side of\eqref{firstcauchyschwarz2to6} can then be bounded by
\begin{align}\label{q=2mod32to6}
\begin{split}
\sum_{(\xi,\tau)\in \mathcal{C}_1}\left|f\left(\tfrac{\xi}{3}\right)\right|^6&+(q+1)\sum_{(\xi,\tau)\in \mathbb F_q^2\setminus\mathcal{C}_1}\sum_{(\xi_1,\xi_2,\xi_3)\in \Sigma_{\mathbb P^1}^3(\xi,\tau)}|f(\xi_1)f(\xi_2)f(\xi_3)|^2\\
&=(q+1)\sum_{(\xi,\tau)\in \mathbb{F}_q^2}\sum_{(\xi_1,\xi_2,\xi_3)\in\Sigma_{\mathbb Pª^1}^3(\xi,\tau)}|f(\xi_1)f(\xi_2)f(\xi_3)|^2-q\sum_{(\xi,\tau)\in \mathcal{C}_1}\left|f\left(\tfrac{\xi}{3}\right)\right|^6.
\end{split}
\end{align}
Interchanging the order of summation as in \eqref{eq_constant1}, we have that
\begin{equation}\label{eq_InterchageOrder}
 \sum_{(\xi,\tau)\in \mathbb{F}_q^2}\sum_{(\xi_1,\xi_2,\xi_3)\in \Sigma_{\mathbb P^1}^3(\xi,\tau)}|f(\xi_1)f(\xi_2)f(\xi_3)|^2=\left(\sum_{\mathbb P^1}|f|^2\right)^3,      
\end{equation}
On the other hand, Hölder's inequality yields
\begin{equation}\label{eq_CSno1inthisproof}
\sum_{(\xi,\tau)\in \mathcal{C}_1}\left|f\left(\tfrac{\xi}{3}\right)\right|^6=\sum_{\mathbb P^1}|f|^6
\ge \frac{1}{q^2}\left(\sum_{\mathbb P^1}|f|^2\right)^3.     
\end{equation}
Combining \eqref{eq_InterchageOrder}--\eqref{eq_CSno1inthisproof} with \eqref{firstcauchyschwarz2to6} and \ref{q=2mod32to6}, we obtain the sharp inequality \eqref{eq_countingL2L6Parab}.
As before, maximizers necessarily have constant modulus.
This concludes the proof of Theorem \ref{thm3} when $q\equiv2(\textup{mod }3)$. 

\subsection{The case $q\equiv 1(\textup{mod }3)$}
In this case, the right-hand side of\eqref{firstcauchyschwarz2to6} can  be bounded by
\begin{align}
\sum_{(\xi,\tau)\in \mathcal{C}_1}&\left|\sum_{(\xi_1,\xi_2,\xi_3)\in \Sigma_{\mathbb P^1}^3(\xi,\tau)}f(\xi_1)f(\xi_2)f(\xi_3)\right|^2+(q-1)\sum_{(\xi,\tau)\in \mathbb F_q^2\setminus\mathcal{C}_1}\sum_{(\xi_1,\xi_2,\xi_3)\in \Sigma_{\mathbb P^1}^3(\xi,\tau)}|f(\xi_1)f(\xi_2)f(\xi_3)|^2\notag\\
&=\sum_{(\xi,\tau)\in \mathcal{C}_1}\left(\left|\sum_{(\xi_1,\xi_2,\xi_3)\in \Sigma_{\mathbb P^1}^3(\xi,\tau)}f(\xi_1)f(\xi_2)f(\xi_3)\right|^2
-(q-1)\sum_{(\xi_1,\xi_2,\xi_3)\in \Sigma_{\mathbb P^1}^3(\xi,\tau)}\left|f(\xi_1)f(\xi_2)f(\xi_3)\right|^2\right)\label{firstcauchyschwarzq=12to6}\\
&+(q-1)\left(\sum_{\mathbb P^1}|f|^2\right)^3.\notag
\end{align}

\begin{remark}\label{remarkexamplediracdelta}
In order to handle \eqref{firstcauchyschwarzq=12to6}, it is not enough to use Cauchy--Schwarz directly. In light of \eqref{eq_SigmaSetCount0},  $|\Sigma_{\mathbb P^1}^3(\xi,\tau)|=2q-1$ for all $(\xi,\tau)\in\mathcal{C}_1$, and so  the resulting upper bound would be
 $$q\sum_{(\xi,\tau)\in \mathcal{C}_1}\sum_{(\xi_1,\xi_2,\xi_3)\in \Sigma_{\mathbb P^1}^3(\xi,\tau)}|f(\xi_1)f(\xi_2)f(\xi_3)|^2,$$
 which is not bounded by the value attained by constant functions, $\left(2-{1}/{q}\right)\left(\sum_{\mathbb P^1}|f|^2\right)^3$.
To see this, it suffices to consider the case when $f=\delta_0$ is  a Dirac delta at the origin. 
\end{remark} 
We proceed to analyze \eqref{firstcauchyschwarzq=12to6}, which coincides with  \eqref{criticalcurveintro} when $(d,k)=(1,3)$.
We will  prove that it is maximized by constants for fixed $\sum_{\mathbb P^1}|f|^2$, via  the following six steps.\\

{\it Step 1: Line decomposition.} 
Let $j_\pm$ denote the two distinct roots of the polynomial $j^2+j+1$ in $\mathbb{F}_q$. 
If $(\xi,\tau)\in\mathcal C_1$, then
$\Sigma_{\mathbb P^1}^3(\xi,\tau)$
is the union of the two lines
\[\mathcal L_\pm(\xi):=\{\tfrac13(\xi,\xi,\xi)+\ell(j_\pm,1,-1-j_\pm):\, \ell\in\mathbb F_q\},\]
which intersect exactly at $\frac13(\xi,\xi,\xi).$
Given  $g:\mathbb{F}_q\to \mathbb{C}$, a key observation which is particular to this $L^2\to L^6$ setting is that
\begin{align}\label{permutation2to6}
\sum_{(\xi_1,\xi_2,\xi_3)\in \mathcal{L}_+(\xi)}g(\xi_1)g(\xi_2)g(\xi_3)=\sum_{(\xi_1,\xi_2,\xi_3)\in \mathcal{L}_-(\xi)}g(\xi_1)g(\xi_2)g(\xi_3).    
\end{align}
Indeed, every element of $\mathcal{L}_-(\xi)$ is a permutation of an element of $\mathcal{L}_+(\xi)$ since $j_-j_+=1$ implies $j_-(j_+,1,-1-j_+)=(1,j_-,-1-j_-)$.
Writing $\mathcal{L}_\pm^\circ(\xi):=\mathcal{L}_\pm(\xi)\setminus \{\frac13({\xi},{\xi},{\xi})\}$, the term inside the outer sum in \eqref{firstcauchyschwarzq=12to6} then equals
\begin{equation}
    \label{reductionbysymmetry2to6}
\left|2\sum_{(\xi_1,\xi_2,\xi_3)\in \mathcal{L}_+^\circ(\xi)}f(\xi_1)f(\xi_2)f(\xi_3)+f\left(\tfrac{\xi}{3}\right)^3\right|^2 
-(q-1)\left(2\sum_{(\xi_1,\xi_2,\xi_3)\in \mathcal{L}_+^\circ(\xi)}\left|f(\xi_1)f(\xi_2)f(\xi_3)\right|^2+\left|f\left(\tfrac{\xi}{3}\right)\right|^6\right),
\end{equation}
which is the same as 
\begin{align}\label{reductionbysymmetry2to62}
\begin{split}
&4\left|\sum_{(\xi_1,\xi_2,\xi_3)\in \mathcal{L}_+^\circ(\xi)}f(\xi_1)f(\xi_2)f(\xi_3)\right|^2-(q-2)\left|f\left(\tfrac{\xi}{3}\right)\right|^6 \\
&\quad+4\Re\left(\overline{f\left(\tfrac{\xi}{3}\right)}^3\sum_{(\xi_1,\xi_2,\xi_3)\in \mathcal{L}_+^\circ(\xi)}f(\xi_1)f(\xi_2)f(\xi_3)\right)
-2(q-1)\sum_{(\xi_1,\xi_2,\xi_3)\in \mathcal{L}_+^\circ(\xi)}\left|f(\xi_1)f(\xi_2)f(\xi_3)\right|^2.
\end{split}
\end{align}
{\it Step  2: Intermediate inequalities.}
The first summand in \eqref{reductionbysymmetry2to62} can be estimated by Cauchy--Schwarz:
\begin{align*}
 \left|\sum_{(\xi_1,\xi_2,\xi_3)\in \mathcal{L}_+^\circ(\xi)}f(\xi_1)f(\xi_2)f(\xi_3)\right|^2\le (q-1)\sum_{(\xi_1,\xi_2,\xi_3)\in \mathcal{L}_+^\circ(\xi)}\left|f(\xi_1)f(\xi_2)f(\xi_3)\right|^2.
\end{align*}
The third summand in \eqref{reductionbysymmetry2to62} can be estimated via the triangle inequality:
\begin{align*}
 \Re\left(\overline{f\left(\tfrac{\xi}{3}\right)}^3\sum_{(\xi_1,\xi_2,\xi_3)\in \mathcal{L}_+^\circ(\xi)}f(\xi_1)f(\xi_2)f(\xi_3)\right)
 \le \left|f\left(\tfrac{\xi}{3}\right)\right|^3\sum_{(\xi_1,\xi_2,\xi_3)\in \mathcal{L}_+^\circ(\xi)}\left|f(\xi_1)f(\xi_2)f(\xi_3)\right|.
\end{align*}
It follows that \eqref{firstcauchyschwarzq=12to6} is bounded by 
\begin{align}\label{symmetrycauchyschwarz2to6}
\begin{split}
\sum_{(\xi,\tau)\in \mathcal{C}_1}\Bigg(2\left|\sum_{(\xi_1,\xi_2,\xi_3)\in \mathcal{L}_+^\circ(\xi)}f(\xi_1)f(\xi_2)f(\xi_3)\right|^2-(q-2)\left|f\left(\tfrac{\xi}{3}\right)\right|^6 \\
+4\left|f\left(\tfrac{\xi}{3}\right)\right|^3\sum_{(\xi_1,\xi_2,\xi_3)\in \mathcal{L}_+^\circ(\xi)}\left|f(\xi_1)f(\xi_2)f(\xi_3)\right|\Bigg),
\end{split}
\end{align}
with equality if $f$ is constant.\\

{\it Step 3: Analyzing the first term in \eqref{symmetrycauchyschwarz2to6}.}
Interchanging the order of summation, 
\begin{align}\label{fubiniselforthogonal}
\begin{split}
&\sum_{(\xi,\tau)\in \mathcal{C}_1}\left|\sum_{(\xi_1,\xi_2,\xi_3)\in \mathcal{L}_+^\circ(\xi)}f(\xi_1)f(\xi_2)f(\xi_3)\right|^2\\
&\quad=\sum_{\substack{ (\xi_1,\xi_2,\xi_3)\in \mathbb{F}_q^3\\ (\eta_1,\eta_2,\eta_3)\in \mathbb{F}_q^3}}f(\xi_1)f(\xi_2)f(\xi_3)\overline{f(\eta_1)f(\eta_2)f(\eta_3)}m(\xi_1,\xi_2,\xi_3,\eta_1,\eta_2,\eta_3),
\end{split}
\end{align}
where
$$m(\xi_1,\xi_2,\xi_3,\eta_1,\eta_2,\eta_3):=\sum_{(\xi,\tau)\in \mathcal{C}_1}{\bf 1}((\xi_1,\xi_2,\xi_3),(\eta_1,\eta_2,\eta_3)\in \mathcal{L}_+^\circ(\xi)).$$
The function $m$ takes values in $\{0,1\}$, and it equals 1 if and only if there exist $u,v\in \mathbb{F}_q^\times$, such that 
\begin{align}
\left\{
\begin{array}{ll}
\xi_1+\xi_2+\xi_3=\eta_1+\eta_2+\eta_3=:3\zeta\\
(\xi_1,\xi_2,\xi_3)=(\zeta+uj_+,\zeta+u,\zeta-u-uj_+)\\
(\eta_1,\eta_2,\eta_3)=(\zeta+vj_+,\zeta+v,\zeta-v-vj_+)
\end{array} \right. 
\end{align}
We proceed to analyze the set $\mathcal{A}:=m^{-1}(1)$, and claim the existence of (explicit) functions
$\omega_1:\mathbb{F}_q^2\setminus \{(\ell,\ell):\,\ell\in \mathbb{F}_q\}\to \mathbb{F}_q$ and 
$\omega_2,\omega_3:\mathcal{B}\to \mathbb{F}_q$, where $\mathcal B\subset \mathbb F_q^3$ is defined as
\begin{equation}\label{eq_defB}
    \mathcal B:= \left\{(\ell_1,\ell_2,\ell_3)\in\mathbb F_q^3:\, \ell_1\neq \ell_2\, \text{and}\, \ell_3\neq \frac{\ell_2j_+-\ell_1}{j_+-1}\right\},
\end{equation}
such that 
\begin{equation}\label{eq_defAA}
    \mathcal{A}=\{(\xi_1,\xi_2,\omega_1(\xi_1,\xi_2),\eta_1,\omega_2(\xi_1,\xi_2,\eta_1),\omega_3(\xi_1,\xi_2,\eta_1)):\,(\xi_1,\xi_2,\eta_1)\in \mathcal{B}\}.
\end{equation}
Indeed, any non-diagonal pair $(\xi_1,\xi_2)\in\mathbb F_q^2$  
defines a unique {\it center} $\zeta=\zeta(\xi_1,\xi_2)\in \mathbb{F}_q$ and a unique nonzero {\it height} $u=u(\xi_1,\xi_2)\in \mathbb{F}_q^\times$ such that $(\xi_1,\xi_2)=(\zeta+uj_+,\zeta+u)$. 
If $(\xi_1,\xi_2,\xi_3,\eta_1,\eta_2,\eta_3)\in \mathcal{A}$,  then  $\xi_3=\zeta-u-uj_+=:\omega_1(\xi_1,\xi_2).$ 
In particular, $\zeta=\frac{\xi_2j_+-\xi_1}{j_+-1}$. 
Moreover, any $\eta_1\neq \zeta$ defines a unique nonzero height $v=v(\xi_1,\xi_2,\eta_1)\in\mathbb F_q^\times$, such that $\eta_1=\zeta+vj_+$ and $\eta_2=\zeta+v=:\omega_2(\xi_1,\xi_2, \eta_1)$ and $\eta_3=\zeta-v-vj_+=:\omega_3(\xi_1,\xi_2, \eta_1).$
The claim follows, as does the fact that $(\omega_1,\omega_2,\omega_3):\mathcal{B}\to \mathcal{B}'$
defines a bijection between the set $\mathcal  B$ from \eqref{eq_defB} and 
\[\mathcal B':=\left\{(\ell_1,\ell_2,\ell_3)\in\mathbb F_q^3:\,  \ell_2\neq \ell_3\, \text{and}\, \ell_1\neq \frac{\ell_3+\ell_2+\ell_2j_+}{2+j_+}\right\}.\]
Write $(\omega_4,\omega_5,\omega_6)=(\omega_1,\omega_2,\omega_3)^{-1}:\mathcal{B}'\to \mathcal{B}$, and observe that the function $\omega_6$  depends only on the last two coordinates of $\mathcal{B}'$. In fact, given $\eta_2\neq \eta_3$, if $(\xi_1,\xi_2,\eta_1)$ is such that $\omega_2(\xi_1,\xi_2,\eta_1)=\eta_2$ and $\omega_3(\xi_1,\xi_2,\eta_1)=\eta_3$, then $(\eta_1,\eta_2,\eta_3)=(\zeta+vj_+,\zeta+v,\zeta-v-vj_+)$,
and consequently
\begin{align}\label{eq_omega61}
\begin{split}
\omega_6(\xi_3,\eta_2,\eta_3)&=\eta_1=\zeta-(1+j_+)\frac{-vj_+}{1+j_+}=\omega_1\left(\zeta+j_+\frac{-vj_+}{1+j_+}, \zeta+\frac{-vj_+}{1+j_+}\right)\\
&=\omega_1(\zeta+v,\zeta-j_+-vj_+)=\omega_1(\eta_2,\eta_3).
\end{split}
\end{align}
\vspace{.1cm}

{\it Step 4: Bounding \eqref{fubiniselforthogonal}.}
Let $\omega_1=\omega_1(\xi_1,\xi_2)$, $\omega_2=\omega_2(\xi_1,\xi_2,\eta_1)$, $\omega_3=\omega_3(\xi_1,\xi_2,\eta_1)$ be  as in the previous step.
By \eqref{eq_defAA}, the right-hand side of \eqref{fubiniselforthogonal} equals
\begin{align}\begin{split}\label{inequalityCS2to6}
 \sum_{(\xi_1,\xi_2,\eta_1)\in \mathcal{B}}&f(\xi_1)f(\xi_2)f(\omega_1)\overline{f(\eta_1)f(\omega_2)f(\omega_3)}\\
 &\le \frac{1}{2}  \left(\sum_{(\xi_1,\xi_2,\eta_1)\in \mathcal{B}}|f(\xi_1)f(\xi_2)f(\eta_1)|^2+|f(\omega_1)f(\omega_2)f(\omega_3)|^2\right)\\
 &=\frac{1}{2}\left(\sum_{(\xi_1,\xi_2,\eta_1)\in \mathcal{B}}|f(\xi_1)f(\xi_2)f(\eta_1)|^2+\sum_{(\xi_3,\eta_2,\eta_3)\in \mathcal{B}'}|f(\xi_3)f(\eta_2)f(\eta_3)|^2\right).
 \end{split}
\end{align}
Since $\zeta(\xi_1,\xi_2)=\frac13(\xi_1+\xi_2+\omega_1(\xi_1,\xi_2))$, we  have that
\begin{align}\label{inequalityB2to6}
\begin{split}
    & \sum_{(\xi_1,\xi_2,\eta_1)\in \mathcal{B}}|f(\xi_1)f(\xi_2)f(\eta_1)|^2=\sum_{(\xi_1,\xi_2,\eta_1)\in \mathbb{F}_q^3}|f(\xi_1)f(\xi_2)f(\eta_1)|^2\\
 &\quad-\sum_{(\xi_1,\eta_1)\in \mathbb{F}_q^2}|f(\xi_1)|^4|f(\eta_1)|^2-\sum_{\xi_1\neq\xi_2}\left|f(\xi_1)f(\xi_2)f\left(\tfrac{\xi_1+\xi_2+\omega_1(\xi_1,\xi_2)}{3}\right)\right|^2.   
\end{split}
\end{align}
Invoking the fact that $\omega_6=\omega_1$, recall \eqref{eq_omega61}, we further have that
\begin{align}\label{inequalityBprime2to6}
\begin{split}
& \sum_{(\xi_3,\eta_2,\eta_3)\in \mathcal{B}'}|f(\xi_3)f(\eta_2)f(\eta_3)|^2=\sum_{(\xi_3,\eta_2,\eta_3)\in \mathbb{F}_q^3}|f(\xi_3)f(\eta_2)f(\eta_3)|^2\\
&\quad-\sum_{(\xi_3,\eta_2)\in \mathbb{F}_q^2}|f(\eta_2)|^4|f(\xi_3)|^2
-\sum_{\eta_2\neq \eta_3}\left|f(\eta_2)f(\eta_3)f\left(\tfrac{\eta_2+\eta_3+\omega_1(\eta_2,\eta_3)}{3}\right)\right|^2.
\end{split}
\end{align}
Estimates \eqref{inequalityCS2to6}--\eqref{inequalityBprime2to6} together imply that \eqref{fubiniselforthogonal} is bounded by
\begin{align}\begin{split}\label{finalmathcalA2to6}
\left(\sum_{\mathbb P^1}|f|^2\right)^3-\left(\sum_{\mathbb P^1}|f|^2\right)\left(\sum_{\mathbb P^1}|f|^4\right)
-\sum_{\xi_1\neq\xi_2}\left|f(\xi_1)f(\xi_2)f\left(\tfrac{\xi_1+\xi_2+\omega_1(\xi_1,\xi_2)}{3}\right)\right|^2,
\end{split}
\end{align}
with equality if $f$ is constant.\\

{\it Step 5: Bounding the last term in \eqref{symmetrycauchyschwarz2to6}.}
Interchanging the order of summation, 
\begin{align}\begin{split}\label{step72to6}
\sum_{(\xi,\tau)\in \mathcal{C}_1}&\left|f\left(\tfrac{\xi}{3}\right)\right|^3\sum_{(\xi_1,\xi_2,\xi_3)\in \mathcal{L}_+^\circ(\xi)}\left|f(\xi_1)f(\xi_2)f(\xi_3)\right|\\
&=\sum_{(\xi_1,\xi_2,\xi_3)\in \mathbb{F}_q^{3}}|f(\xi_1)f(\xi_2)f(\xi_3)|\sum_{(\xi,\tau)\in \mathcal{C}_1}\left|f\left(\tfrac{\xi}{3}\right)\right|^3{\bf 1}((\xi_1,\xi_2,\xi_3)\in \mathcal{L}_+^\circ(\xi)).
\end{split}
\end{align}
By the previous steps, $(\xi_1,\xi_2,\xi_3)\in \mathcal{L}_{+}^\circ(\xi)$ if and only if $\xi=\xi_1+\xi_2+\omega_1(\xi_1,\xi_2)$ and $\xi_1\neq \xi_2$ and $\xi_3=\omega_1(\xi_1,\xi_2)$. Therefore, \eqref{step72to6} equals
\begin{align}\label{step72to6final}
\sum_{\xi_1\neq \xi_2}|f(\xi_1)f(\xi_2)f(\omega_1(\xi_1,\xi_2))|
\left|f\left(\tfrac{\xi_1+\xi_2+\omega_1(\xi_1,\xi_2)}{3}\right)\right|^3.
\end{align}
\vspace{.1cm}

{\it Step 6: End of proof.}
The bounds \eqref{finalmathcalA2to6} and \eqref{step72to6final} combine in \eqref{symmetrycauchyschwarz2to6} to yield the following upper bound for \eqref{firstcauchyschwarzq=12to6}:
\begin{align}\begin{split}\label{assemble2to6}
& 2\left(\sum_{\mathbb P^1}|f|^2\right)^3
-2\left(\sum_{\mathbb P^1}|f|^2\right)\left(\sum_{\mathbb P^1}|f|^4\right)-2\sum_{\xi_1\neq\xi_2}\left|f(\xi_1)f(\xi_2)f\left(\tfrac{\xi_1+\xi_2+\omega_1(\xi_1,\xi_2)}{3}\right)\right|^2\\
 &-(q-2)\sum_{\mathbb P^1}|f|^6+4\sum_{\xi_1\neq \xi_2}
|f(\xi_1)f(\xi_2)f(\omega_1(\xi_1,\xi_2))|\left|f\left(\tfrac{\xi_1+\xi_2+\omega_1(\xi_1,\xi_2)}{3}\right)\right|^3,
\end{split}
\end{align}
with equality when $f$ is constant. 
We further have that 
\begin{align}\label{mamgstep82to6}
\begin{split}
&2\sum_{\xi_1\neq \xi_2}|f(\xi_1)f(\xi_2)f(\omega_1(\xi_1,\xi_2))|\left|f\left(\tfrac{\xi_1+\xi_2+\omega_1(\xi_1,\xi_2)}{3}\right)\right|^3\\
&\quad\le \sum_{\xi_1\neq \xi_2}\left|f(\xi_1)f(\xi_2)f\left(\tfrac{\xi_1+\xi_2+\omega_1(\xi_1,\xi_2)}{3}\right)\right|^2
+\sum_{\xi_1\neq \xi_2}|f(\omega_1(\xi_1,\xi_2))|^2\left|f\left(\tfrac{\xi_1+\xi_2+\omega_1(\xi_1,\xi_2)}{3}\right)\right|^4.
\end{split}
\end{align}
The map $(\xi_1,\xi_2)\mapsto \left(\omega_1(\xi_1,\xi_2),\tfrac13(\xi_1+\xi_2+\omega_1(\xi_1,\xi_2))\right)$
is a bijection from the set $\mathbb{F}_q^2\setminus \{(\ell,\ell):\, \ell\in \mathbb{F}_q\}$ onto itself. 
Indeed,  $\zeta=\frac{1}{3}(\xi_1+\xi_2+\omega_1(\xi_1,\xi_2))$ and  $(\xi_1,\xi_2,\omega_1(\xi_1,\xi_2))=(\zeta+uj_+,\zeta+u,\zeta-(1+j_+)u)$. Knowing $\omega_1(\xi_1,\xi_2)$ and $\frac{1}{3}(\xi_1+\xi_2+\omega_1(\xi_1,\xi_2))$, we thus recover  $u, \zeta$ and  $\xi_1, \xi_2$. It follows that  the map in question is injective, and therefore a bijection. Hence
$$\sum_{\xi_1\neq \xi_2}|f(\omega_1(\xi_1,\xi_2))|^2\left|f\left(\tfrac{\xi_1+\xi_2+\omega_1(\xi_1,\xi_2)}{3}\right)\right|^4=\sum_{\xi_1\neq \xi_2}|f(\xi_1)|^2|f(\xi_2)|^4.$$
This identity together with \eqref{mamgstep82to6} implies that  \eqref{assemble2to6} is bounded by 
\begin{align*}
2\left(\sum_{\mathbb P^1}|f|^2\right)^3-2\left(\sum_{\mathbb P^1}|f|^2\right)\left(\sum_{\mathbb P^1}|f|^4\right)&-(q-2)\sum_{\mathbb P^1}|f|^6+2\sum_{\xi_1\neq \xi_2}|f(\xi_1)|^2|f(\xi_2)|^4\\
&=2\left(\sum_{\mathbb P^1}|f|^2\right)^3-q\sum_{\mathbb P^1}|f|^6.
\end{align*}
A final application of  Hölder's inequality as in \eqref{eq_CSno1inthisproof} yields the sharp inequality 
\eqref{eq_countingL2L6Parab}, and maximizers again  have constant modulus.
This concludes the proof of Theorem \ref{thm3} when $q\equiv1(\textup{mod }3)$.

\section{Proof of Theorem \ref{thm4}}\label{sec_Thm4}

The proof of Theorem \ref{thm4} parallels that of Theorem \ref{thm1} when $q\equiv 1(\textup{mod }4)$ %, recall \S\ref{sec_41}, 
and that of Theorem  \ref{thm2}, and so
we only highlight the necessary changes.

Let $q=p^n$ be an arbitrary power of an odd prime. Firstly, the two-fold convolution of normalized counting measure $\sigma=\sigma_{\mathbb H^2}$ is given by
\begin{equation}\label{eq_2foldconvHyp}
    (\sigma\ast\sigma)(\boldsymbol{\xi},\tau)=\frac1q\times\left\{ \begin{array}{ll}
2q-1, & \text{ if } \tau=\frac{\boldsymbol{\xi}\odot\boldsymbol{\xi}}{2},\\
q-1, & \text{ otherwise, }
\end{array} \right.
\end{equation}
where $\boldsymbol{\xi}\odot\boldsymbol{\xi}:=\xi_1^2-\xi_2^2$ if $\boldsymbol{\xi}=(\xi_1,\xi_2)\in\mathbb F_q^2.$
Secondly, inequality \eqref{eq_sharpineq4} is equivalent to 
\begin{equation}\label{eq_countingL2L4Hyp}
\sum_{(\boldsymbol{\xi},\tau)\in \mathbb{F}_q^3}\left|\sum_{\boldsymbol{\xi}_1\in \mathcal{H}\left(\frac{\boldsymbol{\xi}}{2},\frac{2\tau-\boldsymbol{\xi}\odot\boldsymbol{\xi}}{4}\right)}{f}(\boldsymbol{\xi}_1){f}(\boldsymbol{\xi}-\boldsymbol{\xi}_1)\right|^2\le \left(q+1-\frac{1}{q}\right)\left(\sum_{\mathbb{H}^2}|f|^2\right)^2,    
\end{equation}
where $\sum_{\mathbb{H}^2}|f|^2:=\sum_{\boldsymbol{\xi}\in\mathbb F_q^2}|f(\boldsymbol{\xi},\boldsymbol{\xi}\odot\boldsymbol{\xi})|^2$ and, 
given $(\boldsymbol{\gamma},s)\in\mathbb F_q^d\times\mathbb F_q$, we define the {\it saddle}
\begin{equation}\label{eq_BallDef}
    \mathcal H(\boldsymbol{\gamma},s):=\{\boldsymbol{\eta}\in\mathbb F_q^d:(\boldsymbol{\gamma}-\boldsymbol{\eta})\odot(\boldsymbol{\gamma}-\boldsymbol{\eta})=s\}.
\end{equation}
Thirdly, the critical surface is now  $\widetilde{\mathcal C}_2:=\{(\boldsymbol{\xi},\tau)\in\mathbb F_q^3:\, 2\tau=\boldsymbol{\xi}\odot\boldsymbol{\xi}\}$ and, given $(\boldsymbol{\xi},\tau)\in\widetilde{\mathcal C}_2$,  the saddle $\mathcal{H}(\frac{\boldsymbol{\xi}}{2},0)$  is the union of the two lines 
\begin{equation}\label{eq_HypLines}
    \widetilde{\mathcal L}_\pm(\boldsymbol{\xi}):=\left\{\boldsymbol{\xi}_1\in \mathbb{F}_q^2:\, \boldsymbol{\xi}_1=\frac{\boldsymbol{\xi}}{2}+t(1,\pm 1),\, t\in \mathbb{F}_q\right\},
\end{equation}
which intersect exactly at ${\boldsymbol{\xi}}/{2}$.
The rest of the argument goes through as in \S\ref{sec_41} without further changes, leading to the sharp inequality \eqref{eq_sharpineq4}.

The characterization of maximizers follows the same steps as the ones in \S\ref{sec_thm2}. From the  proof outlined in the previous paragraph, any maximizer $f_{\star}$ of \eqref{eq_countingL2L4Hyp} has constant modulus, whence $f_{\star}=\lambda \rho_{\star}$ with $\rho_{\star}:\mathbb{F}_q^2\to \mathbb{S}^1$ and $\lambda\in \mathbb{C}\setminus{0}$.  
From Lemma \ref{middlepointlemma2to4} and the functional equation derived from the cases of equality in \eqref{eq_countingL2L4Hyp}, it follows that $\rho_{\star}$ is a character over any line $\widetilde{\mathcal L}_\pm(\boldsymbol{\xi})$.
We are then able to conclude  that there exist unique $a,b\in\mathbb F_q$, such that
\begin{align}\label{Lhiperboloid}
\rho_\star(\eta(1,1)+\zeta(1,-1))=\exp \frac{2\pi i \textup{Tr}_n(a \eta+ b \zeta +\widetilde L(\zeta)\eta)}{p}, \textup{ for every } \eta,\zeta\in \mathbb{F}_q,
\end{align}
for a certain $\mathbb{F}_p$-linear map $\widetilde L:\mathbb{F}_q\to \mathbb{F}_q$. We want to verify that $\widetilde L(\zeta)=\widetilde L(1)\zeta$, for all $\zeta\in\mathbb F_q$. From the equality cases of  the intermediate inequalities required for \eqref{eq_countingL2L4Hyp}, we obtain  
 \begin{align}\label{equationcircleclass2to4hiperboloid}
 \rho_\star((\eta-\eta_1)(1,1)+(\zeta-\zeta_1)(1,-1))
 \rho_\star(\eta_1(1,1)+\zeta_1(1,-1))
 =\exp \frac{2\pi i C(\eta,\zeta,s)}p
 \end{align} 
 whenever  $s=\boldsymbol{\xi} \odot\boldsymbol{\xi}=(2\eta_1-\eta)(2\zeta_1-\zeta)$ is nonzero, where $\boldsymbol{\xi}=(\eta_1-\frac{\eta}2)(1,1)+(\zeta_1-\frac{\zeta}2)(1,-1)$. From \eqref{Lhiperboloid} and \eqref{equationcircleclass2to4hiperboloid}, it follows that
 \begin{align*}
C(\eta,\zeta,s)=&\textup{Tr}_n(a \eta_1+b \zeta_1+\widetilde L(\zeta_1)\eta_1)
+\textup{Tr}_n(a(\eta-\eta_1)+b(\zeta-\zeta_1)+\widetilde L(\zeta-\zeta_1)(\eta-\eta_1))\\
=&\textup{Tr}_n( a \eta+b \zeta+\widetilde L(\zeta_1)\eta_1+\widetilde L(\zeta-\zeta_1)(\eta-\eta_1))
\end{align*}
whenever $s=(2\eta_1-\eta)(2\zeta_1-\zeta)$ is nonzero.
From this point onwards, the proof follows that of Theorem \ref{thm2} line by line.
This concludes the proof of Theorem \ref{thm4}.

\section{Proof of Theorem \ref{thm5}}\label{sec_Thm5}
In this section, we prove Theorem \ref{thm5}.
Let $q\equiv 3(\text{mod}\,4)$.
From Lemma \ref{numberofpointsonthecone}, it follows that $|\Gamma^3|=(q-1)(q^2+1)$.
In view of Proposition \ref{prop_counting}, we aim to establish the sharp inequality
\begin{align}\label{sharpinequalityconic}
\sum_{\boldsymbol{\eta}\in \mathbb{F}_q^4}\left| \sum_{\substack{\boldsymbol{\eta}_1,\boldsymbol{\eta}_2\in \Gamma^3\\ \boldsymbol{\eta}_1+\boldsymbol{\eta}_2=\boldsymbol{\eta}}}f(\boldsymbol{\eta}_1)f(\boldsymbol{\eta}_2)\right|^2\le \frac{q^5-2q^4+2 q^3-3q+3}{(q-1)(q^2+1)}\left(\sum_{ \Gamma^3}|f|^2\right)^2,    
\end{align}
for every function $f:\Gamma^3\to \mathbb{C}$. Here, $\sum_{\Gamma^3}|f|^2:=\sum_{\tau\sigma=\boldsymbol{\xi}^2}|f(\boldsymbol{\xi},\tau,\sigma)|^2$.\\

 Our approach can be summarized as follows.
We  decompose $\mathbb{F}^4_q$ into the three disjoint subsets where 
the two-fold convolution  is constant:
 $\Gamma^3$, $\{{\bf 0}\}$, and $\mathbb F_q^4\setminus\Gamma_0^3$; recall Proposition \ref{propconiconvolutionfinitefields}. 
A direct application of the Cauchy--Schwarz inequality suffices to handle the  complement of ${\Gamma}_0^3$. Points in the cone $\Gamma^3$ require knowledge  of the preimages of the corresponding two-fold convolution, combined with Cauchy--Schwarz. Crucially, these preimages correspond to disjoint punctured lines that folliate the cone. The contribution from the origin is dealt with in a similar way, taking into account the higher number of antipodal pairs.
We proceed to establish  \eqref{sharpinequalityconic} in the course of the following four steps.\\

{\it Step 1: Slicing the cone.} 
Define the sets
\begin{align*}
S_1&:=\{(\boldsymbol{\xi},\tau, \sigma)\in \mathbb F_q^4:\, \boldsymbol{\xi}^2=\tau\sigma=1\},\\
S_2&:=\{(\boldsymbol{\xi},\tau,\sigma)\in\mathbb F_q^4:\, \boldsymbol{\xi}^2=\tau\sigma=-1\}.
\end{align*}
Given $i\in\{1,2\}$, let  $S^*_i\subset S_i$ be such that, for each pair $\{\boldsymbol{\eta},-\boldsymbol{\eta}\}\subset S_i$, one and only one element of $\{\boldsymbol{\eta},-\boldsymbol{\eta}\}$ belongs to $S^*_i$. 
Further define $S^*_3:=\{(\boldsymbol{0},1,0),(\boldsymbol{0},0,1)\}$. 
Each point $\boldsymbol{\eta}\in S^*_i,$ $i\in\{1,2,3\}$,  defines a punctured line $\mathcal{L}_{\boldsymbol{\eta}}:=\{\alpha\boldsymbol{\eta}:\,\alpha\in \mathbb{F}_q^\times\}$, and lines corresponding to distinct points do not intersect. Indeed,  if $(\boldsymbol{\xi}_1,\tau_1,\sigma_1)=\boldsymbol{\eta}_1\in S^*_1$ and $(\boldsymbol{\xi}_2,\tau_2,\sigma_2)=\boldsymbol{\eta}_2\in S^{*}_2$, then $(\alpha\boldsymbol{\xi}_1)^2$ is a square in $\mathbb{F}_q$, whereas $\boldsymbol{\xi}_2^2$ is not. 
Moreover, if $\boldsymbol{\eta}_1\neq \boldsymbol{\eta}_2$ are such that $\boldsymbol{\eta}_1,\boldsymbol{\eta}_2 \in S_1$ and $\alpha\boldsymbol{\xi}_1=\boldsymbol{\xi}_2$, then $\alpha^2=\alpha^2\boldsymbol{\xi}_1^2=\boldsymbol{\xi}_2^2=1$. Thus $\alpha=-1$ and  $\{\boldsymbol{\eta}_1,-\boldsymbol{\eta}_1\}\subset S^*_1$, which is absurd. The case of $\boldsymbol{\eta}_1, \boldsymbol{\eta}_2\in S_2$ is analogous. The disjointness of the lines generated by the elements of $S^*_3$ is immediate. 
Letting $S^*=S^*_1\cup S^*_2\cup S^*_3$, we then have that $\Gamma^3$ equals the disjoint union of all  punctured lines indexed by elements of $S^*$,
\begin{align}\label{slicingthecone}
\Gamma^3=\bigcup_{\boldsymbol{\eta}\in S^*}\mathcal{L}_{\boldsymbol{\eta}}.    
\end{align}
Indeed, given $(\boldsymbol{\xi},\tau,\sigma)\in \Gamma^3$ such that $\boldsymbol{\xi}^2=t^2$ for some $t\in \mathbb{F}_q^\times$, then $t^{-1}\left(\boldsymbol{\xi},{\tau},{\sigma}\right)\in S_1$, and thus $t^{-1}\left(\boldsymbol{\xi},{\tau},{\sigma}\right)\in S^*_1$ or $-t^{-1}\left(\boldsymbol{\xi},{\tau},{\sigma}\right)\in S^*_1.$ 
On the other hand, if $0\neq \boldsymbol{\xi}^2\neq t^2$ for all $t\in \mathbb{F}_q^\times$, then there exists $t_0\in \mathbb{F}_q^\times$ such that $-t_0^2=\boldsymbol{\xi}^2$, since  $\{t^2:\, t\in \mathbb{F}_q^\times\}$ and $\{-t^2:\, t\in \mathbb{F}_q^\times\}$ are disjoint subsets of $\mathbb{F}_q^\times$ with $(q-1)/2$ elements each; in particular,  $t_0^{-1}\left( \boldsymbol{\xi},{\tau},{\sigma}\right)\in S_2$. Finally, if $\boldsymbol{\xi}^2=0$, then  $\tau^{-1}(\boldsymbol{\xi},\tau,\sigma)\in S^*_3$ or ${\sigma}^{-1}(\boldsymbol{\xi},\tau,\sigma)\in S^*_3$, and \eqref{slicingthecone} follows. 
As a consequence, given $\boldsymbol{\eta}\in \Gamma^3$ and $\boldsymbol{s}\in S^*$ such that $\boldsymbol{\eta}=\alpha \boldsymbol{s}$ for some $\alpha\neq 0$, we have that
\begin{align}\label{structureofpreimagescone}
\{(\boldsymbol{\eta}_1,\boldsymbol{\eta}_2)\in (\Gamma^3)^2:\, \boldsymbol{\eta}_1+\boldsymbol{\eta}_2=\boldsymbol{\eta}\}
=\{(\beta \boldsymbol{s},(\alpha-\beta)\boldsymbol{s}):\,\beta\in \mathbb{F}_q^\times\setminus\{\alpha\}\}.
\end{align}
Indeed, the right-hand side of \eqref{structureofpreimagescone} contains $q-2$ elements of the left-hand side. That these are all follows from \eqref{eq_sizeSigmaGamma}.\\

{\it Step 2:  Mass transport.}
The decomposition $\mathbb F_q^4=(\mathbb F_q^4\setminus\Gamma_0^3)\cup\Gamma^3\cup\{\bf 0\}$ and two applications of Cauchy--Schwarz together with Proposition \ref{propconiconvolutionfinitefields} lead to 
\begin{align}\label{CSgenericpointscone}
\begin{split}
    \sum_{\boldsymbol{\eta}\in \mathbb{F}_q^4}\left| \sum_{\substack{\boldsymbol{\eta}_1,\boldsymbol{\eta}_2\in \Gamma^3\\ \boldsymbol{\eta}_1+\boldsymbol{\eta}_2=\boldsymbol{\eta}}}f(\boldsymbol{\eta}_1)f(\boldsymbol{\eta}_2)\right|^2
\le q(q-1)\sum_{\boldsymbol{\eta}\in \mathbb F_q^4\setminus{\Gamma}_0^3}\sum_{\substack{\boldsymbol{\eta}_1,\boldsymbol{\eta}_2\in \Gamma^3\\ \boldsymbol{\eta}_1+\boldsymbol{\eta}_2=\boldsymbol{\eta}}}|f(\boldsymbol{\eta}_1)f(\boldsymbol{\eta}_2)|^2\\
+(q-2)\sum_{\boldsymbol{\eta}\in \Gamma^3}\sum_{\substack{\boldsymbol{\eta}_1,\boldsymbol{\eta}_2\in \Gamma^3\\ \boldsymbol{\eta}_1+\boldsymbol{\eta}_2=\boldsymbol{\eta}}}|f(\boldsymbol{\eta}_1)f(\boldsymbol{\eta}_2)|^2+\left|\sum_{\boldsymbol{\eta}\in {\Gamma}^3}f(\boldsymbol{\eta})f(-\boldsymbol{\eta})\right|^2,
\end{split}
\end{align}
with equality if $f$ is constant.
Interchanging the order of summation as in \eqref{eq_constant1}, we have $$\sum_{\boldsymbol{\eta}\in \mathbb{F}_q^4}\sum_{\substack{\boldsymbol{\eta}_1,\boldsymbol{\eta}_2\in \Gamma^3\\ \boldsymbol{\eta}_1+\boldsymbol{\eta}_2=\boldsymbol{\eta}}}|f(\boldsymbol{\eta}_1)f(\boldsymbol{\eta}_2)|^2=\left(\sum_{\Gamma^3}|f|^2\right)^2,$$
and therefore the right-hand side of \eqref{CSgenericpointscone} equals
\begin{align}\label{conefirstgrouping}
\begin{split}
q(q-1)\left(\sum_{\Gamma^3}|f|^2\right)^2-((q-1)q-(q-2))\sum_{\boldsymbol{\eta}\in \Gamma^3}\sum_{\substack{\boldsymbol{\eta}_1,\boldsymbol{\eta}_2\in \Gamma^3\\ \boldsymbol{\eta}_1+\boldsymbol{\eta}_2=\boldsymbol{\eta}}}|f(\boldsymbol{\eta}_1)f(\boldsymbol{\eta}_2)|^2\\
+\left|\sum_{\boldsymbol{\eta}\in {\Gamma}^3}f(\boldsymbol{\eta})f(-\boldsymbol{\eta})\right|^2-q(q-1)\sum_{\boldsymbol{\eta}\in {\Gamma}^3}|f(\boldsymbol{\eta})f(-\boldsymbol{\eta})|^2.
\end{split}
\end{align}
We proceed to analyze the cone slices coming from the second summand in \eqref{conefirstgrouping}, and the antipodal pairs from the third and fourth summands in \eqref{conefirstgrouping}.\\

{\it Step 3: Cone slices.}
Interchanging the order of summation, we have
\begin{equation}\label{conefubinicone}
\sum_{\boldsymbol{\eta}\in \Gamma^3}\sum_{\substack{\boldsymbol{\eta}_1,\boldsymbol{\eta}_2\in \Gamma^3\\ \boldsymbol{\eta}_1+\boldsymbol{\eta}_2=\boldsymbol{\eta}}}|f(\boldsymbol{\eta}_1)f(\boldsymbol{\eta}_2)|^2=\sum_{\boldsymbol{\eta}_1,\boldsymbol{\eta}_2\in \Gamma^3}|f(\boldsymbol{\eta}_1)f(\boldsymbol{\eta}_2)|^2{\bf 1}(\boldsymbol{\eta}_1+\boldsymbol{\eta}_2\in \Gamma^3).  
\end{equation}
In light of \eqref{structureofpreimagescone}, it holds that $\boldsymbol{\eta}_1+\boldsymbol{\eta}_2\in \Gamma^3$ if and only if there exist $\boldsymbol{s}\in S^*$ and $\beta_1,\beta_2\in \mathbb{F}_q^\times$, such that $\beta_1\boldsymbol{s}=\boldsymbol{\eta}_1$ and $\beta_2 \boldsymbol{s}=\boldsymbol{\eta}_2$ and $\beta_1\neq -\beta_2$. Therefore \eqref{conefubinicone} boils down to
\begin{align}\begin{split}\label{conerearrangeincone}
\sum_{\boldsymbol{s}\in S^*}\sum_{\substack{\beta_1,\beta_2\in \mathbb{F}_q^\times\\ \beta_1\neq -\beta_2}}|f(\beta_1\boldsymbol{s})f(\beta_2\boldsymbol{s})|^2&=\sum_{\boldsymbol{s}\in S^*}\sum_{\beta_1,\beta_2\in \mathbb{F}_q^\times}|f(\beta_1\boldsymbol{s})f(\beta_2\boldsymbol{s})|^2-\sum_{\boldsymbol{\eta}\in \Gamma^3}|f(\boldsymbol{\eta})f(-\boldsymbol{\eta})|^2\\
&=\sum_{\boldsymbol{s}\in S^*}\left(\sum_{\beta\in \mathbb{F}_q^\times}|f(\beta\boldsymbol{s})|^2\right)^2-\sum_{\boldsymbol{\eta}\in \Gamma^3}|f(\boldsymbol{\eta})f(-\boldsymbol{\eta})|^2.
\end{split}
\end{align}
Since $|S^*|=(q-1)^{-1}|\Gamma^3|=q^2+1$,  a further application of Cauchy--Schwarz yields 
\begin{equation}\label{q^2+1cone}
\sum_{\boldsymbol{s}\in S^*}\left(\sum_{\beta\in \mathbb{F}_q^\times}|f(\beta \boldsymbol{s})|^2\right)^2\ge \frac{1}{q^2+1}\left(\sum_{\boldsymbol{s}\in S^*}\sum_{\beta\in \mathbb{F}_q^\times}|f(\beta \boldsymbol{s})|^2\right)^2=\frac{1}{q^2+1}\left(\sum_{\Gamma^3}|f|^2\right)^2,
\end{equation}
where we used \eqref{slicingthecone} in the last identity. 
Equality holds  in \eqref{q^2+1cone} if $f$ is constant. \\

{\it Step 4: Antipodal pairs.}
It remains to analyze the last two summands in \eqref{conefirstgrouping} along with the additional term coming from the antipodal pairs in \eqref{conerearrangeincone}.
In light of Lemma \ref{numberofpointsonthecone}, these can be bounded by Cauchy--Schwarz as follows:
\begin{align}\begin{split}\label{oppsoitepoints}
\left|\sum_{\boldsymbol{\eta}\in {\Gamma}^3}f(\boldsymbol{\eta})f(-\boldsymbol{\eta})\right|^2-\frac{(q-2)}{q(q^2-q+1)-1}\left |\sum_{\boldsymbol{\eta}\in \Gamma^3}f(\boldsymbol{\eta})f(-\boldsymbol{\eta})\right|^2
\le \frac{q^3-q^2+1}{q(q^2-q+1)-1}\left (\sum_{ \Gamma^3}|f|^2\right)^2.
\end{split}
\end{align}
Combining \eqref{CSgenericpointscone}--\eqref{oppsoitepoints},   we obtain the desired 
\eqref{sharpinequalityconic}, with equality if $f$ is constant. 
We proceed to prove that all maximizers of \eqref{sharpinequalityconic} have constant modulus.

\subsection{Maximizers of \eqref{sharpinequalityconic} have constant modulus}
  Let $f_\star:\Gamma^3\to \Co$ be a maximizer of \eqref{sharpinequalityconic}.
 We note that  $g:=|f_\star|$ is also a maximizer of \eqref{sharpinequalityconic}, and aim to show that $g$ is constant.
  In order for equality to hold in \eqref{oppsoitepoints}, the value of $g(\boldsymbol{\eta})g(-\boldsymbol{\eta})$ must not depend on $\boldsymbol{\eta}\in\Gamma^3$. 
Moreover, in order for equality to hold in the second application of Cauchy--Schwarz in  \eqref{CSgenericpointscone}, we must have
\begin{align}\label{sumuniquenesscone}
g(\boldsymbol{\eta}_1)g(\boldsymbol{\eta}_2)=C(\boldsymbol{\eta}_1+\boldsymbol{\eta}_2), \text{ for every }    \boldsymbol{\eta}_1,\boldsymbol{\eta}_2\in \mathcal{L}_{\boldsymbol{s}} \text{ with } \boldsymbol{\eta}_1\neq \boldsymbol{\eta}_2,
\end{align}
and, in light of \eqref{structureofpreimagescone}, that $g(\alpha \boldsymbol{s})g(\beta \boldsymbol{s})=g(\tfrac{\alpha+\beta}2\boldsymbol{s})^2$
whenever $\alpha\neq -\beta$.
Interestingly, the analysis splits into two cases, depending on whether $q$ equals 3 or not. \\

{\bf Case 1: $q> 3$.}
 Given $\boldsymbol{s}\in S^*$, assume that the function $\alpha\mapsto g(\alpha \boldsymbol{s})$ is maximized for $\alpha=\alpha_0\neq 0$. 
 Given any nonzero $\beta \neq 2\alpha_0,$ we then have 
 $$g(\beta \boldsymbol{s})g(\alpha_0 \boldsymbol{s})\ge g(\beta \boldsymbol{s})g((2\alpha_0-\beta)\boldsymbol{s})=g(\alpha_0 \boldsymbol{s})^2,$$ and therefore $g(\beta \boldsymbol{s})=g(\alpha_0 \boldsymbol{s})$. 
Similarly, we   conclude  that $g(2\alpha_0 \boldsymbol{s})=g(\alpha_0 \boldsymbol{s})$. Indeed, let $\beta\in\mathbb F_q^\times$ be such that $\beta\neq \alpha_0$ and $\beta\neq 2\alpha_0$ (this requires $q>3$). We have already seen that $g(\beta \boldsymbol{s})=g(\alpha_0 \boldsymbol{s})$ is maximal, and so $$g(2\alpha_0 \boldsymbol{s})g(\beta \boldsymbol{s})\ge g(2\alpha_0 \boldsymbol{s})g((2\beta -2\alpha_0)\boldsymbol{s})=g(\beta \boldsymbol{s})^2.$$
It follows that $g(\alpha \boldsymbol{s})=C(\boldsymbol{s})$, for every $\alpha\in \mathbb{F}_q^\times$.
This implies that $g$ is constant, since  equality in \eqref{q^2+1cone} forces  $\sum_{\alpha\in \mathbb{F}_q^\times}g(\alpha \boldsymbol{s})^2$
to be constant.\\

The case $q=3$ is more involved, and combinatorially more interesting.\\

{\bf Case 2: $q=3$.}  
By  Lemma \ref{numberofpointsonthecone} and \eqref{slicingthecone}, the cone $\Gamma^3\subset\mathbb F_3^4$ has twenty points and  equals the disjoint union of ten lines, each with two antipodal points. As before, there exists $c\geq 0$ such that \begin{equation}\label{eq_1srconstraint}
    g(\boldsymbol{\eta})g(-\boldsymbol{\eta})=c, \text{ for every } \boldsymbol{\eta}\in \Gamma^3.
\end{equation}
On each of the ten  lines $\{\mathcal{L}_{\boldsymbol{s}}:\, \boldsymbol{s}\in S^*\}$ that make up $\Gamma^3$, take  $\boldsymbol{s}'\in\mathcal L_{\boldsymbol{s}}$ such that $g(\boldsymbol{s}')\ge \sqrt{c}$, and denote the set of such $\boldsymbol{s}'$ by $S'$. 
In order for equality to hold in \eqref{q^2+1cone}, we  need $g(\boldsymbol{s}')^2+g(-\boldsymbol{s}')^2=C$ to be constant; since $g$ is nonzero, it follows that $g(\boldsymbol{s}')>0$, for all $\boldsymbol{s}'\in S'$. Identity \eqref{eq_1srconstraint} then implies 
$$g(\boldsymbol{s}')^2+\frac{c^2}{g(\boldsymbol{s}')^2}=C,\text{ for all }\boldsymbol{s}'\in S'.$$ 
The function $x\mapsto x^2+c^2/x^2$ is strictly increasing if $x\ge \sqrt{c}$, and so $g$ is constant on $S'$. Writing $g\mid_{S'}=:\rho\ge \sqrt{c}$,
it  suffices to show that $\rho=\sqrt{c}$. 
We will suppose $\rho>\sqrt{c}$, and establish sufficiently many structural constraints on the set $S'$ to reach a contradiction.

To implement this strategy, let $\boldsymbol{s}'_0:=(\boldsymbol{0},a,0)\in S'$, where ${a}\in\{1,2\}$, and  $\pi:\Gamma^3\to \mathbb{F}_3$ denote the projection onto the last coordinate, $(\boldsymbol{\xi},\tau,\sigma)\mapsto \sigma$.
Given $i\in\{1,2\}$, write $S'_i:=\{\boldsymbol{s}'\in S':\,\pi(\boldsymbol{s}')=i\}.$  
In order to get equality in \eqref{CSgenericpointscone}, we need that  $g(\boldsymbol{s}'_1)g(\boldsymbol{s}'_2)=C(\boldsymbol{s}'_1+\boldsymbol{s}'_2)$ for all $\boldsymbol{s}_1',\boldsymbol{s}_2'\in S'$. Moreover, given $\boldsymbol{\eta} \in \mathbb F_3^4\setminus\Gamma_0^3$, the set of unordered pairs $A(\boldsymbol{\eta}):=\{\{\boldsymbol{s}_1,\boldsymbol{s}_2\}:\, \boldsymbol{s}_1,\boldsymbol{s}_2\in \Gamma^3,\, \boldsymbol{s}_1+\boldsymbol{s}_2=\boldsymbol{\eta}\}$ has exactly three elements by \eqref{eq_sizeSigmaGamma}. Assume that $S'_1$ is nonempty (the case of  nonempty $S_2'$ is dealt with in a similar way). Given $\boldsymbol{s}_1'\in S'_1$, we thus have  $|A(\boldsymbol{s}'_0+\boldsymbol{s}'_1)|=3$. Consider the other two pairs $\{\boldsymbol{s}'_2,\boldsymbol{s}'_3\},\{\boldsymbol{s}'_4,\boldsymbol{s}'_5\}\in A(\boldsymbol{s}'_0+\boldsymbol{s}'_1)$.
Since $$\rho^2=g(\boldsymbol{s}'_0)g(\boldsymbol{s}'_1)=g(\boldsymbol{s}'_2)g(\boldsymbol{s}'_3)=g(\boldsymbol{s}'_4)g(\boldsymbol{s}'_5),$$
it follows from $\rho>\sqrt c$ that $\rho=g(\boldsymbol{s}'_2)=g(\boldsymbol{s}'_3)=g(\boldsymbol{s}'_4)=g(\boldsymbol{s}'_5)$, and thus $\boldsymbol{s}'_2,\boldsymbol{s}'_3, \boldsymbol{s}'_4,\boldsymbol{s}'_5\in S'.$
Crucially,  we observe that $\boldsymbol{s}'_2,\boldsymbol{s}'_3, \boldsymbol{s}'_4,\boldsymbol{s}'_5\in S_2'$ since $\pi(\boldsymbol{s}'_2+\boldsymbol{s}'_3)=\pi(\boldsymbol{s}'_4+\boldsymbol{s}'_5)=1$.
On the other hand,  if $A(\boldsymbol{s}'_0+\boldsymbol{s}_2')=\{\{\boldsymbol{s}'_0,\boldsymbol{s}'_2\},\{\boldsymbol{s}'_6,\boldsymbol{s}'_7\},\{\boldsymbol{s}'_8,\boldsymbol{s}'_9\}\}$, then $\pi(\boldsymbol{s}'_0+\boldsymbol{s}'_2)=2,$ and so we conclude in a similar way that $\boldsymbol{s}'_6,\boldsymbol{s}'_7,\boldsymbol{s}'_8,\boldsymbol{s}'_9\in S'_1$. 
Further note that $\boldsymbol{s}'_1\notin \{\boldsymbol{s}'_6,\boldsymbol{s}'_7,\boldsymbol{s}'_8,\boldsymbol{s}'_9\},$ for otherwise $\boldsymbol{s}'_1+\boldsymbol{s}'_i=\boldsymbol{s}'_0+\boldsymbol{s}'_2$ for some $i\in\{6,7,8,9\}$; from  $\boldsymbol{s}'_0+\boldsymbol{s}'_1=\boldsymbol{s}'_2+\boldsymbol{s}'_3$, we would then obtain $2\boldsymbol{s}'_0=\boldsymbol{s}'_i+\boldsymbol{s}'_3$, which is absurd  since
$2\boldsymbol{s}'_0, \boldsymbol{s}'_i, \boldsymbol{s}'_3$ belong to distinct lines (recall \eqref{structureofpreimagescone}). Thus $S'=\{\boldsymbol{s}_0'\}\cup S_1'\cup S_2'$, where $S_1'=\{\boldsymbol{s}'_1,\boldsymbol{s}'_6,\boldsymbol{s}'_7,\boldsymbol{s}'_8,\boldsymbol{s}'_9\}$ and $S_2':=\{\boldsymbol{s}'_2,\boldsymbol{s}'_3,\boldsymbol{s}'_4,\boldsymbol{s}'_5\}$ are disjoint, and disjoint from $\{\boldsymbol{s}_0'\}$. It follows that the set 
$$\bigcup_{\boldsymbol{s}'_i\in S'_1}(A(\boldsymbol{s}'_0+\boldsymbol{s}'_i)\setminus \{\boldsymbol{s}'_0+\boldsymbol{s}'_i\})$$ contains ten distinct  pairs, and thus cannot be a subset of the six-element set $\{\{\boldsymbol{u},\boldsymbol{v}\}:\, \boldsymbol{u},\boldsymbol{v}\in S_2', \boldsymbol{u}\neq \boldsymbol{v}\}$. This contradiction results from assuming $\rho>\sqrt c$; thus  $\rho=\sqrt{c}$, and $g=|f_\star|$ is constant. This concludes the proof of Theorem \ref{thm5}.

\section{Proof of Theorem \ref{thm6}}\label{sec_thm6}
In this section, we prove Theorem \ref{thm6}.
Starting with the case of the cone $\Upsilon_0^3$ equipped with normalized counting measure $\nu=\nu_\Upsilon$,
we   test the functional 
\begin{equation}\label{eq_Phi}
    \Phi_p(\varepsilon):=\frac{\sum_{\boldsymbol{x}\in \mathbb F_p^4} |(f_\varepsilon\nu)^\vee(\boldsymbol{x})|^4}{\left(\frac1{|\Upsilon_0^3|}\sum_{\boldsymbol{\xi}\in \Upsilon_0^3}|f_\varepsilon(\boldsymbol{\xi})|^2\right)^2}
\end{equation}
against the function $f_\varepsilon:={\mathbf 1}_{\Upsilon_0^3}+\varepsilon\delta_0$,
for small values of $\varepsilon>0$.
The denominator in \eqref{eq_Phi} is straightforward to compute:
\[\frac1{|\Upsilon_0^3|}\sum_{\boldsymbol{\xi}\in \Upsilon_0^3}|f_\varepsilon(\boldsymbol{\xi})|^2 =\frac{(|\Upsilon_0^3|-1)\times 1^2+1\times(1+\varepsilon)^2}{|\Upsilon_0^3|}=1-\frac1{|\Upsilon_0^3|}+\frac{(1+\varepsilon)^2}{|\Upsilon_0^3|}.\]
As for the numerator in \eqref{eq_Phi}, note that \eqref{eq_FEOdef} implies
\[(\delta_0 \nu)^\vee(\boldsymbol{x})=\frac{1}{|\Upsilon_0^3|}\sum_{\boldsymbol{\xi}=0} e(\boldsymbol{x}\cdot\boldsymbol{\xi})=\frac1{|\Upsilon_0^3|}, \text{ for every }\boldsymbol{x}\in\mathbb F_p^4,\] 
whereas
$({\bf 1}_{\Upsilon_0^3}\nu)^\vee=\nu_\Upsilon^\vee$
has been computed in \eqref{eq_nuhatUps}.
Together with $|\Upsilon_0^3|=p^3+p^2-p$ (Proposition \ref{prop_ConicConv}), this leads to
\[\Phi_p(\varepsilon)=\frac{1\times\left(\frac{\varepsilon+p^3+p(p-1)}{|\Upsilon_0^3|}\right)^4+(|\Upsilon_0^3|-1)\times\left(\frac{\varepsilon+p(p-1)}{|\Upsilon_0^3|}\right)^4+(p^4-|\Upsilon_0^3|)\times\left(\frac{\varepsilon-p}{|\Upsilon_0^3|}\right)^4}{\left(1-\frac1{|\Upsilon_0^3|}+\frac{(1+\varepsilon)^2}{|\Upsilon_0^3|}\right)^2},\]
which can be simplified to 
$\Phi_p(\varepsilon)={A_p(\varepsilon)}/{B_p(\varepsilon)}$,
where
\[ A_p(\varepsilon):=2 p^5 + p^6 - 7 p^7 - p^8 + 5 p^9 + p^{10} +(- 8 p^5  + 4 p^6  + 
 8 p^7 )\varepsilon +(- 6 p^3 + 6 p^4 + 6 p^5 ) \varepsilon^2 + 4 p^2 \varepsilon^3 + p^2 \varepsilon^4;\]
\[     B_p(\varepsilon):=(p^2+p-1)^2 (p^3 +p^2-p + \varepsilon (2 + \varepsilon))^2.\]
Consequently,
 \[\Phi_p'(0)=\frac{4 p^2(p-2)  (p^2-1)^2}{(p^2+p-1)^5},\]
 which is a strictly positive quantity for every prime $p>2$.
 
To handle the cone $\Gamma_0^3$ equipped with normalized counting measure $\nu=\nu_\Gamma$, consider the functional
\begin{equation}\label{eq_Psi}
    \Psi_p(\varepsilon):=\frac{\sum_{\boldsymbol{x}\in \mathbb F_p^4} |(f_\varepsilon\nu)^\vee(\boldsymbol{x})|^4}{\left(\frac1{|\Gamma_0^3|}\sum_{\boldsymbol{\xi}\in \Gamma_0^3}|f_\varepsilon(\boldsymbol{\xi})|^2\right)^2}.
\end{equation}
If $p\equiv 1(\text{mod } 4)$, then the proof is the same as the one for $\Upsilon_0^3$ above; recall our discussion in the course of the proof of Proposition \ref{prop_ConicConv}.
 If $p\equiv 3(\text{mod } 4)$, then $({\bf 1}_{\Gamma_0^3}\nu)^\vee=\nu_\Gamma^\vee$
is given by \eqref{eq_nuGammaHat}, which together with  $|\Gamma_0^3|=p^3-p^2+p$ (Lemma \ref{numberofpointsonthecone}) leads to
\[\Psi_p(\varepsilon)=\frac{1\times\left(\frac{\varepsilon+p^3-p(p-1)}{|\Gamma_0^3|}\right)^4+(|\Gamma_0^3|-1)\times\left(\frac{\varepsilon-p(p-1)}{|\Gamma_0^3|}\right)^4+(p^4-|\Gamma_0^3|)\times\left(\frac{\varepsilon+p}{|\Gamma_0^3|}\right)^4}{\left(1-\frac1{|\Gamma_0^3|}+\frac{(1+\varepsilon)^2}{|\Gamma_0^3|}\right)^2}.\]
This can be simplified to $\Psi_p(\varepsilon)={C_p(\varepsilon)}/{D_p(\varepsilon)}$,
where
 \[C_p(\varepsilon):=-2 p^5 + 5 p^6 - 5 p^7 + 5 p^8 - 3 p^9 + p^{10} + 4 p^6 \varepsilon + (6 p^3  - 
 6 p^4  + 6 p^5 ) \varepsilon^2 + 4 p^2 \varepsilon^3 + p^2 \varepsilon^4;\]
 \[D_p(\varepsilon):=(p^2-p+1)^2 (p^3 -p^2 +p + \varepsilon (2 + \varepsilon))^2.\]
 It follows that
\[\Psi_p'(0)=-\frac{4  p^2 (p-2)( p-1)^2(p^2+1)}{(p^2-p+1)^5},\]
 which is a  strictly negative quantity for every prime $p\equiv 3(\textup{mod } 4)$.

As a consequence, for $S\in\{\Gamma_0^3,\Upsilon_0^3\}$ and any prime $p$,  the  function ${\bf 1}_S$ is  not a critical point of the functionals $\Psi_p, \Phi_p$, respectively, and therefore not a local or global maximizer for the $L^2(S,\textup d \nu)\to L^4 (\mathbb F_p^4,\textup d \boldsymbol{x})$ extension inequality from $S\subset \mathbb F_p^4$. This concludes the proof of Theorem \ref{thm6}.

\section*{Acknowledgments}
The authors are partially supported by FCT/Portugal through CAMGSD, IST-ID, projects UIDB/ 04459/2020 and UIDP/04459/2020 and
3 by IST Santander Start Up Funds. They are grateful to Asem Abdelraouf and Emanuel Carneiro for inspiring discussions regarding the present work.

\end{document}